% ------------------------------------------------------------------------
% bjourdoc.tex for birkjour.cls*******************************************
% ------------------------------------------------------------------------
%%%%%%%%%%%%%%%%%%%%%%%%%%%%%%%%%%%%%%%%%%%%%%%%%%%%%%%%%%%%%%%%%%%%%%%%%%

\documentclass{birkjour}
%
%
% THEOREM Environments (Examples)-----------------------------------------
\usepackage{amsfonts,amssymb,amsmath,amsgen,amsopn,amsbsy}
 \usepackage{amssymb,stmaryrd}
\usepackage{amsmath}
\usepackage{graphicx}
\usepackage{epstopdf}
\usepackage{xcolor}

 \numberwithin{equation}{section}
\newtheorem{theorem}{Theorem}[section]

\newtheorem{lemma}[theorem]{Lemma}
\newtheorem{proposition}[theorem]{Proposition}

\theoremstyle{definition}
\newtheorem{definition}[theorem]{Definition}

\theoremstyle{remark}
\newtheorem{remark}[theorem]{Remark}

\begin{document}

%-------------------------------------------------------------------------
% editorial commands: to be inserted by the editorial office
%
%\firstpage{1} \volume{228} \Copyrightyear{2004} \DOI{003-0001}
%
%
%\seriesextra{Just an add-on}
%\seriesextraline{This is the Concrete Title of this Book\br H.E. R and S.T.C. W, Eds.}
%
% for journals:
%
%\firstpage{1}
%\issuenumber{1}
%\Volumeandyear{1 (2004)}
%\Copyrightyear{2004}
%\DOI{003-xxxx-y}
%\Signet
%\commby{inhouse}
%\submitted{March 14, 2003}
%\received{March 16, 2000}
%\revised{June 1, 2000}
%\accepted{July 22, 2000}
%
%
%
%---------------------------------------------------------------------------
%Insert here the title, affiliations and abstract:
%

\setlength{\headheight}{26pt}
\title[Positive Toeplitz Operators]
 {Positive Toeplitz Operators from \\ a Harmonic Bergman-Besov Space \\ into Another}

%----------Author 1
\author[\"{O}mer Faruk Do\u{g}an]{\"{O}mer Faruk Do\u{g}an }

%----------classification, keywords, date
\subjclass{Primary  47B35; Secondary 31B05}

\keywords{Toeplitz operator, Harmonic Bergman-Besov space, Sc\-hat\-ten class, Carleson measure, Berezin transform}

\date{January 1, 2004}
%----------additions
\address{Department of Mathematics, Tek$\dot{\hbox{\i}}$rda\u{g} \\ Namik Kemal University,
59030 \\ Tek$\dot{\hbox{\i}}$rda\u{g}, Turkey,}

\email{ofdogan@nku.edu.tr}
%%% ----------------------------------------------------------------------

\begin{abstract}
We define positive Toeplitz operators between harmonic Berg\-man-Be\-sov spaces $b^p_\alpha$ on the unit ball of $\mathbb{R}^n$ for the full ranges of parameters $0<p<\infty$, $\alpha\in\mathbb{R}$. We give characterizations of bounded and compact Toeplitz operators taking one harmonic  Bergman-Besov space  into another in terms of Carleson and vanishing Carleson measures. We also give characterizations for a positive Toeplitz operator on $b^{2}_{\alpha}$    to be a Schatten class operator $S_{p}$  in terms of averaging functions and Berezin transforms for $1\leq p<\infty$, $\alpha\in\mathbb{R}$. Our results extend those known for harmonic weighted Bergman spaces.
\end{abstract}

%%% ----------------------------------------------------------------------
\maketitle
%%% ----------------------------------------------------------------------
%\tableofcontents

\section{Introduction}\label{s-introduction}

Let $n\geq 2$ be an integer and $\mathbb{B}=\mathbb{B}_{n}$ be the open unit ball in $\mathbb{R}^n$. Let $\nu$  be the Lebesgue volume  measure on $\mathbb{B}$ normalized so that $\nu(\mathbb{B})=1$.
For  $\alpha\in \mathbb{R}$, we define the weighted volume measures $\nu_\alpha$ on $\mathbb{B}$ by
\[
d\nu_\alpha(x)=\frac{1}{V_\alpha} (1-|x|^2)^\alpha d\nu(x).
\]
These measures are finite when $\alpha>-1$ and in this case we choose $V_\alpha$ so that $\nu_\alpha(\mathbb{B})=1$. Naturally $V_0=1$.  For $\alpha\leq -1$, we set $V_\alpha=1$. We denote the Lebesgue classes with respect to $\nu_\alpha$ by $L^p_{\alpha}$,  $0<p<\infty$, and the corresponding norms by $\|\cdot\|_{L^p_{\alpha}}$.

Let $h(\mathbb{B})$ be the space of all complex-valued harmonic functions on $\mathbb{B}$ with the topology of uniform convergence on compact subsets. The space of bounded harmonic functions on $\mathbb{B}$ is denoted by $h^{\infty}$.
For $0<p<\infty$ and $\alpha>-1$, the harmonic weighted Bergman space $b^p_\alpha$ is defined by $b^p_\alpha=  L^p_\alpha \cap h(\mathbb{B})$ endowed with the norm
$\|\cdot\|_{L^p_{\alpha}}$.  The subfamily $b^2_\alpha$ is a  reproducing kernel Hilbert space with respect to the inner product $[f,g]_{b^2_\alpha}=\int_{\mathbb{B}}f\overline{g} \, d\nu_{\alpha}(x)$ and with the reproducing kernel $R_\alpha(x,y)$  such that $f(x)=[f,R_\alpha(x,\cdot)]_{b^2_\alpha}$ for every $f\in b^2_\alpha$ and $x\in \mathbb{B}$. It is well-known that $R_\alpha$ is real-valued  and $R_\alpha(x,y)=R_\alpha(y,x)$. The homogeneous expansion of $R_\alpha(x,y)$ is given in the $\alpha>-1$ part of the formulas (\ref{Rq - Series expansion}) and (\ref{gamma k q-Definition}) below (see \cite{DS}, \cite{GKU2}).

The theory of Toeplitz operators on harmonic Bergman spaces on the unit ball is a well established subject. Harmonic weighted Bergman space $b^2_\alpha$ with $\alpha>-1$ is naturally imbedded in  $  L^2_\alpha$ by the inclusion $i$.
For $\alpha>-1$, the orthogonal projection  $ Q_\alpha:L^2_\alpha \to b^2_\alpha$ is given by the integral operator

\begin{equation}\label{orthogonal projection}
  Q_\alpha f(x)= \frac{1}{V_\alpha} \int_{\mathbb{B}} R_\alpha(x,y) f(y) (1-|y|^2)^\alpha d\nu(y) \quad (f\in L^2_\alpha ).
\end{equation}
This integral operator plays a major role in the theory of weighted harmonic
Bergman spaces and the question when the Bergman projection $ Q_\alpha:L^p_\beta \to b^p_\beta$  is bounded is studied in many sources such as
(\cite[Theorem 3.1]{JP}, \cite[Theorem 2.5]{PE}, \cite[Theorem 3.1]{KS}).  Then one defines the Toeplitz operator  ${_{\alpha}}T_{\phi} :b^p_\alpha \to b^p_\alpha$ with symbol $\phi$ by ${_{\alpha}}T_{\phi}=Q_{\alpha}M_{\phi}i$, where $M_{\phi}$ is the operator of multiplication by $\phi$. Let $\mu$ be a finite complex Borel measure on $\mathbb{B }$. The Toeplitz operator ${_{\alpha}}T_{\mu}$ with symbol $\mu$ is defined by
\[
{_{\alpha}}T_{\mu}f(x) = \int_{\mathbb{B}} R_\alpha(x,y)  f(y)  d\mu(y)
\]
for $f \in h^{\infty}$. The operator ${_{\alpha}}T_{\mu}$ is more general and reduces to ${_{\alpha}}T_{\phi}$  when $d\mu=\phi d\nu_{\alpha}$. Especially, positive symbols of bounded and compact Toeplitz operators are completely characterized in term of Carleson measures as in \cite{M},\cite{M2} on the ball and in \cite{CLN1} on smoothly bounded domains; these results are all concerned with Toeplitz operators from a harmonic Bergman space into itself. Toeplitz operators from a harmonic Bergman space into another are considered and positive symbols of bounded and compact Toeplitz operators are characterized in \cite{CLN2} on smoothly bounded domains and in \cite{CKY} on the half space.

The weighted harmonic Bergman spaces $b^p_\alpha$ initially defined for $\alpha > -1$
can be extended to the whole range $\alpha \in \mathbb{R}$. These are studied in detail in \cite{GKU2}. We call the extended family $b^p_\alpha$ $(\alpha \in \mathbb{R}) $  harmonic Bergman-Besov spaces and the corresponding reproducing kernels $R_\alpha(x,y)$ $(\alpha \in \mathbb{R})$  harmonic Bergman-Besov kernels. The homogeneous expansion of  $R_\alpha(x,y)$ can be expressed in terms of zonal harmonics
\begin{equation}\label{Rq - Series expansion}
R_\alpha(x,y)=\sum_{k=0}^{\infty} \gamma_k(\alpha) Z_k(x,y) \quad  (\alpha\in \mathbb{R}, \, x,y\in \mathbb{B}),
\end{equation}
where (see \cite[Theorem 3.7]{GKU1}, \cite[Theorem 1.3]{GKU2})

\begin{equation}\label{gamma k q-Definition}
        \gamma_k(\alpha):= \begin{cases}
         \dfrac{(1+n/2+\alpha)_k}{(n/2)_k}, &\text{if $\, \alpha > -(1+n/2)$}; \\
         \noalign{\medskip}
         \dfrac{(k!)^2}{(1-(n/2+\alpha))_k (n/2)_k}, &\text{if $\, \alpha \leq -(1+n/2)$},
\end{cases}
\end{equation}
and $(a)_b$ is the Pochhammer symbol. For definition and details about $Z_k(x,y)$, see \cite[Chapter 5]{ABR}.

The spaces $b^p_\alpha$  can  be defined by using the radial differential operators $D^t_s$ $(s,t \in \mathbb{R})$ introduced in \cite{GKU1} and \cite{GKU2}. These operators are defined in terms of reproducing kernels of harmonic Besov spaces and are specific to these spaces, but still mapping  $h(\mathbb{B})$ onto itself.  The properties of $D^t_s$ will be reviewed in Section \ref{s-preliminaries}. Consider the linear transformation $I_{s}^{t}$ defined for $f\in h(\mathbb{B})$ by

\begin{equation*}
  I^t_s f(x) := (1-|x|^2)^t D^t_s f(x).
\end{equation*}

\begin{definition}\label{definition of the h B-B space}
For $0<p<\infty$ and $\alpha \in \mathbb{R}$, we define the harmonic Bergman-Besov space $b^p_\alpha$ to consist of all $f\in h(\mathbb{B})$ for which $ I^t_s f$
belongs to  $L^p_\alpha$ for some  $s,t$ satisfying (see \cite{GKU2} when $1\leq p<\infty$, and \cite{DOG} when $0<p<1$)

\begin{equation}\label{alpha+pt}
 \alpha+pt>-1.
 \end{equation}
The quantity
\[
\|f\|^p_{b^p_\alpha} = \| I^t_s f\|^p_{L^p_\alpha}=\frac{1}{V_\alpha}\int_{\mathbb{B}} |D^t_s f(x)|^p (1-|x|^2)^{\alpha+pt} d\nu(x) <\infty
\]
defines a norm (quasinorm when $0<p<1$) on $b^p_\alpha$ for any such $s,t$.
\end{definition}
It is well-known that the above definition is independent of $s,t$ under (\ref{alpha+pt}),  and the norms (quasinorms when $0<p<1$) on a given space are all equivalent. Thus for a given pair $s,t$,  $ I^t_s$ isometrically imbeds $b^p_\alpha$ into $L^p_\alpha$ if and only if (\ref{alpha+pt}) holds.

Strictly speaking, the norm (quasinorm when $0<p<1$) depends on $s$ and $t$ but this is not mentioned as it is known that every choice of the pair $(s,t)$ leads to an equivalent norm. Harmonic Bergman-Besov projections $Q_{s}$ that map Lebesgue classes boundedly onto Bergman-Besov spaces $b^p_\alpha$ can be precisely identified as  in the case of harmonic weighted Bergman spaces by
\begin{equation}\label{two}
\alpha+1<p(s+1).
\end{equation}
Then $I_{s}^{t}$ is a right inverse to $Q_{s}$. This is all done in \cite{GKU2}.

Now let $\alpha\in \mathbb{R}$,  s and t satisfing (\ref{two}) and (\ref{alpha+pt}), and a measurable function $\phi$ on $\mathbb{B}$ be given. Harmonic Bergman-Besov projections $Q_{s}$ forces us to define Toeplitz operators on all $b^p_\alpha $ as follows.  We define the Toeplitz operator ${_{s,t}}T_{\phi}: b^p_\alpha \to b^p_\alpha $ with symbol $\phi$ by ${_{s,t}}T_{\phi}=Q_{s}M_{\phi}I_{s}^{t}$. Explicitly,
\[
{_{s,t}}T_{\phi}f(x) = \int_{\mathbb{B}} R_s(x,y) \phi(y) I^t_s f(y)  d\nu_{s}(y) \quad (f\in b^p_\alpha).
\]
We see that ${_{s,t}}T_{\phi}$ makes sense if $\phi \in L^{1}_{s+t}$ and $f$ is a harmonic polynomial. Hence ${_{s,t}}T_{\phi}$ is a densely defined on $b^p_\alpha $ for such $\phi$, because harmonic polynomials are dense in each $b^p_\alpha $.
When $\alpha>-1$ and $p>1$, one can choose $t=0$ and a value of $s$ satisfying (\ref{two})  is $s=\alpha$. Then $I_{\alpha}^{0}$ is inclusion, and  ${_{s,t}}T_{\phi}$  reduces to the classical Toeplitz operator ${_{\alpha}}T_{\phi}=Q_{\alpha}M_{\phi}i$ on the harmonic weighted Bergman spaces  $b^p_\alpha $. We use the term  classical to mean a Toeplitz operator with $i=I_{\alpha}^{0}$. The value $s=\alpha$ does not work when $\alpha \leq -1$. It is possible to take $s \neq \alpha$ also when $\alpha>-1$. So we have more general Toeplitz operators defined via $I_{s}^{t}$  strictly on harmonic Bergman spaces too. It turns out that the properties of Toeplitz operators studied in this paper are independent of $s,t$ under (\ref{two}) and (\ref{alpha+pt}).

Having obtained the integral form for ${_{s,t}}T_{\phi}$, we can now define Toeplitz operators on $b^p_\alpha $ with symbol $\mu$. Let $\alpha$, and $s$ and $t$ satisfing (\ref{two}) and (\ref{alpha+pt}) be given. We define
\[
{_{s,t}}T_{\mu}f(x) = \frac{V_\alpha}{V_{s}} \int_{\mathbb{B}} R_s(x,y) I^t_s f(y) (1-|y|^{2})^{s-\alpha}d\mu(y) \quad (f\in b^p_\alpha).
\]
The operator ${_{s,t}}T_{\mu}$ is more general and reduces to ${_{s,t}}T_{\phi}$ when $d\mu=\phi d\nu_{\alpha}$.   It makes sense when
\[
d\kappa(y)=(1-|y|^{2})^{s+t-\alpha}d\mu(y)
\]
is finite  and $f$ is a harmonic polynomial. Like ${_{s,t}}T_{\phi}$,  ${_{s,t}}T_{\mu}$ is  densely defined on $b^p_\alpha $ for  finite $\kappa$. Note that $\mu$ need not be finite in conformity with that $\alpha$ is unrestricted.

In this paper, we consider the Toeplitz operator ${_{s,t}}T_{\mu}$ with positive symbol and characterize those that are bounded and compact from  a harmonic Bergman-Besov space $b^{p_{1}}_{\alpha_{1}}$ into another $b^{p_{2}}_{\alpha_{2}}$ for $0<p_{1},p_{2}<\infty$ and $\alpha_{1},\alpha_{2} \in \mathbb{R}$. Our main tools are Carleson measures and Berezin transforms.  Let $\mu$ be a positive Borel measure $\mu$ on $\mathbb{B}$. For $\lambda>0$ and $\alpha>-1$, we say that $\mu$ is a   $(\lambda,\alpha)$-Bergman-Carleson measure if for any two positive numbers $p$ and $q$ with $q/p=\lambda$, the inclusion $i: b^p_\alpha\to L^q(\mu)$ is bounded, that is, if
\[
\left(\int_{\mathbb{B}} |f(x)|^q\, d\mu(x) \right)^{1/q} \lesssim \|f\|_{b^p_\alpha}, \qquad (f\in b^p_\alpha).
\]
We can now state our main result.

\begin{theorem}\label{Theorem-1}
Let $0<p_{1},p_{2}<\infty$ and $\alpha_{1},\alpha_{2} \in \mathbb{R}$. Suppose that $\alpha_{1}+p_{1}t>-1$, $\alpha_{2}+p_{2}t>-1$ and

\begin{equation} \label{Equ-1}
n+s+1>n\max\left(1,\frac{1}{p_{i}}\right)+\frac{1+\alpha_{i}}{p_{i}}, \qquad i=1,2.
\end{equation}
Let
\[
\zeta=1+\frac{1}{p_{1}}-\frac{1}{p_{2}}, \qquad \gamma=\frac{1}{\zeta}\left(s+t+\frac{\alpha_{1}}{p_{1}}-\frac{\alpha_{2}}{p_{2}}\right).
\]
Let $\mu$ be a positive Borel measure on $\mathbb{B}$ and $d\kappa(y)=
(1-|x|^{2})^{s+t-\alpha_{1}}d\mu(y)$. Then the following statements are equivalent:
\begin{enumerate}
\item[(i)] ${_{s,t}}T_{\mu}$ is bounded  from  $b^{p_{1}}_{\alpha_{1}}$ to $b^{p_{2}}_{\alpha_{2}}$.
\item[(ii)] $\kappa$ is a $(\zeta,\gamma)$-Bergman-Carleson measure.
\end{enumerate}
\end{theorem}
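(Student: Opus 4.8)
The plan is to reduce the boundedness of ${_{s,t}}T_{\mu}$ to a testing/duality condition against functions in the relevant Bergman--Besov spaces and then recognize that condition as a Carleson embedding for $\kappa$. First I would fix a pair $s,t$ satisfying \eqref{two} (for both $\alpha_1$ and $\alpha_2$) and \eqref{alpha+pt}; condition \eqref{Equ-1} is exactly what is needed so that the reproducing kernel $R_s(x,\cdot)$, after the weight twist, lies in the dual spaces that appear, and so that the pointwise kernel estimates for $R_s$ (from \cite{GKU1}, \cite{GKU2}) give convergent integrals. The key computation is the bilinear pairing: for harmonic polynomials $f\in b^{p_1}_{\alpha_1}$ and a suitable test family $g$, one writes
\[
\bigl\langle {_{s,t}}T_{\mu}f,\,g\bigr\rangle_{s}
 = \frac{V_{\alpha_1}}{V_s}\int_{\mathbb{B}} I^t_s f(y)\,\overline{I^{t'}_s g(y)}\,(1-|y|^2)^{s+t'-\alpha_1}\,d\mu(y),
\]
after moving $Q_s$ onto $g$ via the reproducing property; choosing the second parameter so that the weight becomes $(1-|y|^2)^{s+t-\alpha_1}$ converts $d\mu$ into $d\kappa$. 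Thus boundedness of ${_{s,t}}T_{\mu}: b^{p_1}_{\alpha_1}\to b^{p_2}_{\alpha_2}$ is equivalent to the bilinear form above being bounded on $b^{p_1}_{\alpha_1}\times (b^{p_2}_{\alpha_2})^{*}$, and using the known identification of the dual of $b^{p_2}_{\alpha_2}$ (again via the $D^t_s$ machinery) this is equivalent to the embedding $I^t_s(b^{p_1}_{\alpha_1})\hookrightarrow L^{q}(\kappa)$ being bounded for the exponent $q$ forced by the arithmetic of $\zeta$. Tracking exponents, $q/p_1 = \zeta$ and the weight shift accounts for the value $\gamma$, which is precisely the statement that $\kappa$ is a $(\zeta,\gamma)$-Bergman--Carleson measure.

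For the implication (ii)$\Rightarrow$(i), I would run this in reverse: assuming $\kappa$ is $(\zeta,\gamma)$-Bergman--Carleson, estimate $\|{_{s,t}}T_{\mu}f\|_{b^{p_2}_{\alpha_2}}$ by duality, bound the bilinear integral by Hölder (with exponents dictated by $\zeta$) and then apply the Carleson estimate to both factors — the factor involving $f$ directly, and the factor involving the test function $g$ after invoking the reproducing-kernel/atomic-decomposition description of $(b^{p_2}_{\alpha_2})^*$. The case $\zeta \le 1$ versus $\zeta>1$ (equivalently $q\le p_1$ or $q>p_1$) may need to be handled with slightly different Hölder bookkeeping, but both are routine once the pairing identity is in place. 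For (i)$\Rightarrow$(ii), I would test ${_{s,t}}T_{\mu}$ on the normalized reproducing kernels $R_s(\cdot,a)/\|R_s(\cdot,a)\|$ (or on the standard test functions adapted to Carleson-box geometry in $\mathbb{B}$), use the lower bound $|R_s(x,a)|\gtrsim (1-|a|^2)^{-(n+s)}$ on a Bergman ball around $a$, and read off the Carleson condition on pseudohyperbolic balls; by the standard equivalence (a geometric lemma on $\mathbb{B}$, decomposing into a lattice of Bergman balls) the ball condition is equivalent to the embedding $I^t_s(b^{p_1}_{\alpha_1})\hookrightarrow L^q(\kappa)$, hence to $(\zeta,\gamma)$-Carleson.

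The main obstacle is the bookkeeping of the three free parameters $s,t$ and the second differentiation order needed to land the dual pairing on the correct weight, together with verifying that \eqref{Equ-1} is exactly the condition making all the kernel integrals converge and the duality valid simultaneously for $i=1$ and $i=2$; in particular one must check that the whole argument is genuinely independent of the admissible choice of $(s,t)$, as asserted after Definition~\ref{definition of the h B-B space}. A secondary technical point is justifying the manipulations (Fubini, moving $Q_s$ across the pairing) beyond the dense class of harmonic polynomials, i.e. extending from the densely defined operator to a bona fide bounded operator — this is where the finiteness of $\kappa$ and the Carleson hypothesis are used to control tails. Once these are settled, the equivalence with $(\zeta,\gamma)$-Bergman--Carleson measures follows from the already-developed theory of such measures for harmonic Bergman--Besov spaces.
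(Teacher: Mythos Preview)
Your overall shape is right for part of the range, but there are two genuine gaps that would make the argument fail as written.

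\medskip
\textbf{(i)$\Rightarrow$(ii) when $\zeta<1$ (i.e.\ $p_2<p_1$).} Testing ${_{s,t}}T_\mu$ on a single normalized kernel $R_s(\cdot,a)$ and using the lower bound on a pseudohyperbolic ball gives you only the uniform estimate $\kappa(E_\delta(a))\lesssim (1-|a|^2)^{(n+\gamma)\zeta}$. That is the characterization of $(\zeta,\gamma)$-Carleson \emph{only when $\zeta\ge 1$} (Theorem~\ref{Carleson-Besov2}). For $\zeta<1$ the Carleson condition is the strictly stronger statement $\{\kappa(E_\delta(a_k))(1-|a_k|^2)^{-(n+\gamma)\zeta}\}\in\ell^{1/(1-\zeta)}$ (Theorem~\ref{Carleson-Besov1}), and no amount of testing on individual kernels will produce an $\ell^{1/(1-\zeta)}$ bound. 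The paper's proof handles this case by randomizing: one tests on $f_\tau=\sum_k \lambda_k r_k(\tau)(1-|a_k|^2)^{n+s-(n+\alpha_1)/p_1}R_{s+t}(\cdot,a_k)$ with Rademacher signs, integrates in $\tau$, applies Khinchine's inequality, and then uses $\ell^{p_1/p_2}$--$\ell^{p_1/(p_1-p_2)}$ duality on the resulting sum to extract the $\ell^{1/(1-\zeta)}$ condition. This step is not ``different H\"older bookkeeping''; it is the key additional idea, and it is missing from your outline.

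\medskip
\textbf{(ii)$\Rightarrow$(i) when $0<p_2\le 1$.} Your plan hinges on identifying $(b^{p_2}_{\alpha_2})^{*}$ and running duality. For $0<p_2<1$ the space is not locally convex and the duality you invoke is unavailable; for $p_2=1$ the dual is a Bloch-type space, not a Bergman--Besov space, so the bilinear-form reduction to a product Carleson estimate does not go through either. The paper treats these cases without duality: for $p_2=1$ one applies Fubini directly to $\|{_{s+t}}T_\kappa f\|_{b^1_{\alpha_2+t}}$ and reduces to a $(1/p_1,\alpha_1+p_1t)$-Carleson estimate for a weighted measure; for $0<p_2<1$ one uses the lattice $\{a_k\}$, the subharmonic estimate \eqref{equsubharmonic2}, and the elementary inequality $(\sum x_k)^{p_2}\le\sum x_k^{p_2}$ to bound $\|{_{s+t}}T_\kappa f\|_{b^{p_2}_{\alpha_2+p_2 t}}^{p_2}$ by a sum over balls (see \eqref{together}), then splits again into $\zeta\ge1$ and $\zeta<1$.

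\medskip
Two further remarks. First, the reduction you describe via a bilinear pairing is done more cleanly in the paper by an intertwining relation: $D^t_s\,{_{s,t}}T_\mu=({_{s+t}}T_\kappa)\,D^t_s$, so that ${_{s,t}}T_\mu$ on $b^{p_i}_{\alpha_i}$ is conjugate to the \emph{classical} Toeplitz operator ${_{s+t}}T_\kappa$ on $b^{p_i}_{\alpha_i+p_i t}$; this removes the ambiguity about the ``second differentiation order'' you flag. Second, your claim that boundedness is equivalent to an embedding $I^t_s(b^{p_1}_{\alpha_1})\hookrightarrow L^q(\kappa)$ with $q/p_1=\zeta$ is not what the theorem says: the $(\zeta,\gamma)$-Carleson condition is an embedding $b^p_\gamma\hookrightarrow L^{\zeta p}(\kappa)$, and the link to the Toeplitz operator is not a single embedding but the two-variable estimate of Proposition~\ref{product-carleson} (for $p_2>1$) or the direct estimates above (for $p_2\le 1$).
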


\begin{remark}
In Theorem \ref{Theorem-1}, the condition
\begin{equation}\label{three}
n+s+1>n\max\left(1,\frac{1}{p_{1}}\right)+\frac{1+\alpha_{1}}{p_{1}}
\end{equation}
is used to prove that (i) implies (ii), whereas the condition
\begin{equation}\label{four}
n+s+1>n\max\left(1,\frac{1}{p_{2}}\right)+\frac{1+\alpha_{2}}{p_{2}}
\end{equation}
is needed to prove that (ii) implies (i). Moreover, when $p_{1}\geq 1$, condition (\ref{three}) reduces to $\alpha_{1}+1<p_{1}(s+1)$, which is equivalent to the fact that $Q_{s}$ is bounded from $L^{p_{1}}_{\alpha_{1}}$ onto $b^{p_{1}}_{\alpha_{1}}$. In a similar way when $p_{2}\geq 1$, condition (\ref{four})  is equivalent to the fact that $Q_{s}$ is bounded from $L^{p_{2}}_{\alpha_{2}}$ onto $b^{p_{2}}_{\alpha_{2}}$.
\end{remark}

 In order to characterize compact positive Toeplitz operators ${_{s,t}}T_{\mu}$ from  harmonic Bergman-Besov spaces $b^{p_{1}}_{\alpha_{1}}$ into another $b^{p_{2}}_{\alpha_{2}}$ for all $0<p_{1},p_{2}<\infty$ and $\alpha_{1},\alpha_{2} \in \mathbb{R}$, we introduce the notion of vanishing $(\lambda,\alpha)$-Bergman-Carleson measures. We say that $\mu\geq 0$ is a vanishing $(\lambda,\alpha)$-Bergman-Carleson measure if for any two $0<p,q<\infty$ satisfying $q/p=\lambda$ and any sequence $\{f_{k}\}$ in $b^{p}_{\alpha}$ with $f_{k}\to 0$ uniformly on each compact subset of $\mathbb{B}$ and $ \|f_{k}\|_{b^{p}_{\alpha}}\leq 1$,

\[
\lim_{k\to \infty}\int_{\mathbb{B}} |f_{k}(x)|^q\, d\mu(x) =0.
\]

\begin{theorem}\label{Theorem-2}
Let $0<p_{1},p_{2}<\infty$ and $\alpha_{1},\alpha_{2} \in \mathbb{R}$. Let $s,t,\zeta$, and $\gamma$ be as in Theorem \ref{Theorem-1}.
Let $\mu$ be a positive Borel measure on $\mathbb{B}$ and $d\kappa(y)=
(1-|x|^{2})^{s+t-\alpha}d\mu(y)$. Then the following statements are equivalent:
\begin{enumerate}
\item[(i)] ${_{s,t}}T_{\mu}$ is compact  from  $b^{p_{1}}_{\alpha_{1}}$ to $b^{p_{2}}_{\alpha_{2}}$.
\item[(ii)] $\kappa$ is a vanishing $(\zeta,\gamma)$-Bergman-Carleson measure.
\end{enumerate}
\end{theorem}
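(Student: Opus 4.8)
The plan is to derive Theorem~\ref{Theorem-2} by mimicking the proof of Theorem~\ref{Theorem-1}, replacing each boundedness estimate by its ``vanishing'' analogue. The key observation is that compactness of ${_{s,t}}T_{\mu}$ from $b^{p_1}_{\alpha_1}$ to $b^{p_2}_{\alpha_2}$ is equivalent to the following sequential condition: whenever $\{f_k\}$ is bounded in $b^{p_1}_{\alpha_1}$ and $f_k\to 0$ uniformly on compact subsets of $\mathbb{B}$, then $\|{_{s,t}}T_{\mu}f_k\|_{b^{p_2}_{\alpha_2}}\to 0$. Since bounded subsets of $b^{p_1}_{\alpha_1}$ are normal families (functions in a Bergman-Besov space are bounded on compacta by a constant times the norm, via the reproducing/sampling estimates reviewed in Section~\ref{s-preliminaries}), any bounded sequence has a subsequence converging uniformly on compacta, and one checks the limit is automatically in $b^{p_1}_{\alpha_1}$; so this sequential reformulation is exactly the right substitute for the operator-norm statement used in Theorem~\ref{Theorem-1}. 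I would record this equivalence as a short lemma first.

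Next I would prove (i)$\Rightarrow$(ii). Assume ${_{s,t}}T_{\mu}$ is compact. Take $0<p,q<\infty$ with $q/p=\zeta$ and a sequence $\{g_k\}\subset b^{p}_{\gamma}$ with $\|g_k\|_{b^p_\gamma}\le 1$ and $g_k\to 0$ uniformly on compacta; I must show $\int_{\mathbb{B}}|g_k|^q\,d\mu\to 0$. As in the bounded case, the pairing identity that underlies Theorem~\ref{Theorem-1} expresses $\int |g_k|^q\,d\kappa$ (up to constants and a change of the exponent/weight bookkeeping via $\zeta,\gamma$) in terms of $\langle {_{s,t}}T_{\mu} F_k, G_k\rangle$ for suitable test functions $F_k\in b^{p_1}_{\alpha_1}$, $G_k\in b^{p_2'}_{\beta}$ built from $g_k$ — typically powers of $|g_k|$ times unimodular factors, or Bergman kernels at points where $g_k$ is large. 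The construction of these test functions is already in the proof of Theorem~\ref{Theorem-1}; here one additionally notes that if $g_k\to 0$ on compacta then the associated $F_k\to 0$ on compacta while staying bounded in $b^{p_1}_{\alpha_1}$, so compactness of ${_{s,t}}T_{\mu}$ forces $\|{_{s,t}}T_{\mu}F_k\|_{b^{p_2}_{\alpha_2}}\to 0$, and pairing against the bounded sequence $\{G_k\}$ in the dual gives the claim. The condition (\ref{three}) is again what makes the test functions $F_k$ lie in $b^{p_1}_{\alpha_1}$ with controlled norm.

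For (ii)$\Rightarrow$(i), assume $\kappa$ is a vanishing $(\zeta,\gamma)$-Bergman-Carleson measure and let $\{f_k\}$ be bounded in $b^{p_1}_{\alpha_1}$ with $f_k\to 0$ uniformly on compacta. I would reuse the integral estimate from the proof that (ii)$\Rightarrow$(i) in Theorem~\ref{Theorem-1}, which bounds $\|{_{s,t}}T_{\mu}f_k\|_{b^{p_2}_{\alpha_2}}$ by (a constant times) a Carleson-type quantity of $\kappa$ applied to a function controlled by $f_k$; using the \emph{vanishing} hypothesis in place of the plain Carleson bound, and the fact that $I^t_s f_k\to 0$ on compacta as well, gives $\|{_{s,t}}T_{\mu}f_k\|_{b^{p_2}_{\alpha_2}}\to 0$. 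A standard device here is to split $\mathbb{B}=\{|x|\le r\}\cup\{|x|>r\}$: on the compact part one uses uniform convergence of $f_k$ together with the already-established boundedness (Theorem~\ref{Theorem-1}) of the operator induced by $\mu$ restricted there, and on the outer annulus one uses that the vanishing Carleson condition makes the tail contribution small uniformly in $k$, then lets $r\to 1$. Condition (\ref{four}) is used exactly as in Theorem~\ref{Theorem-1} to run the outer-annulus estimate.

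The main obstacle I expect is the passage between the operator statement and the Carleson-measure statement at the level of \emph{sequences} rather than norms: one has to check that the test-function constructions from Theorem~\ref{Theorem-1} can be chosen to depend on $k$ in such a way that they converge to $0$ on compact sets precisely when the underlying data do, and that nothing in the duality pairing spoils this (for instance, the dual sequence $\{G_k\}$ must stay bounded, not blow up). This is mostly bookkeeping, but it is the place where a careless argument would fail; everything else is a routine ``vanishing'' upgrade of estimates already proved for Theorem~\ref{Theorem-1}, together with the normal-families fact that lets compactness be tested on sequences converging uniformly on compacta.
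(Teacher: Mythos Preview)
Your high-level plan is sound, but there are two concrete gaps that would prevent the proof from going through as written.

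First, you never distinguish the cases $\zeta\ge 1$ and $0<\zeta<1$, and the two behave very differently. When $0<\zeta<1$ (equivalently $p_2<p_1$), vanishing $(\zeta,\gamma)$-Bergman--Carleson measures coincide with ordinary $(\zeta,\gamma)$-Bergman--Carleson measures (Theorem~\ref{Carleson-Besov4}). Hence (i)$\Rightarrow$(ii) is immediate: compact implies bounded, and Theorem~\ref{Theorem-1} gives the Carleson condition. For (ii)$\Rightarrow$(i) in this range the paper does \emph{not} run a splitting argument; instead it invokes the atomic isomorphism $b^p_\alpha\simeq\ell^p$ and the Banach-space fact that every bounded operator $\ell^{p_1}\to\ell^{p_2}$ with $p_2<p_1$ is automatically compact. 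Your tail-splitting outline implicitly relies on a ``small near the boundary'' condition, but for $\zeta<1$ the Carleson criterion is an $\ell^{1/(1-\zeta)}$ summability condition, not a pointwise decay; one can still push a dominated-convergence argument through, but it is extra work you have not indicated, and the Pitt-type argument is cleaner.

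Second, your description of (i)$\Rightarrow$(ii) for general $g_k$ has a genuine obstruction in the harmonic setting: you propose test functions ``typically powers of $|g_k|$ times unimodular factors,'' but powers of harmonic functions are not harmonic, so no such $F_k,G_k\in b^{p}_{\beta}$ exist. The paper avoids this entirely. For $\zeta\ge 1$ it does not verify the vanishing Carleson definition directly; instead it checks the equivalent averaging-function criterion in Theorem~\ref{Carleson-Besov3} by testing ${_{s+t}}T_\kappa$ on the single explicit family $f_k(x)=(1-|a_k|^2)^{(n+s)-(n+\alpha_1)/p_1}R_{s+t}(x,a_k)$ with $|a_k|\to 1^-$, which is bounded in $b^{p_1}_{\alpha_1+p_1t}$ and tends to $0$ on compacta, and then reads off $\widehat\kappa_{\gamma,\delta}(a_k)(1-|a_k|^2)^{(n+\gamma)(1-\zeta)}\to 0$ from $\|{_{s+t}}T_\kappa f_k\|\to 0$. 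This is the missing idea in your (i)$\Rightarrow$(ii). Your (ii)$\Rightarrow$(i) outline for $\zeta\ge 1$ is closer to the paper's, which uses duality when $p_2>1$ and the lattice estimate (\ref{together}) with a finite/tail split over $\{a_j\}$ when $0<p_2\le 1$; your annulus split in $\mathbb{B}$ is essentially the same device once translated to the lattice.
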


 The \textit{holomorphic} analogues of Theorems \ref{Theorem-1} and \ref{Theorem-2} are proved in \cite{PZ} for $-1<\alpha_{1},\alpha_{2}< \infty$ and for the classical Toeplitz operator ${_{\alpha}}T_{\mu}$ with $\alpha>-1$.

In this paper, we will also provide a criteria for the positive Toeplitz operators ${_{s,t}}T_{\mu}$ on $b^{2}_{\alpha}$ with $\alpha\in \mathbb{R}$ to be in the Schatten classes $S_{p}$ of $b^{2}_{\alpha}$ (see Section \ref{Positive S-C Toeplitz Op}) for $1\leq p <\infty$. The membership in the Schatten classes $S_{p}$, have been studied in various settings; see \cite{CLN1} and \cite{M2} for $1\leq p <\infty$, $\alpha=0$, \cite{M2} for $1\leq p <\infty$, $\alpha>-1$ and \cite{CKL} for $0< p <\infty$, $\alpha=0$. We extend their characterizations for classical positive Toeplitz operators on harmonic  Bergman spaces to ${_{s,t}}T_{\mu}$ on $b^{2}_{\alpha}$ and to all $\alpha\in \mathbb{R}$. To state our result we briefly introduce some notation. Given $\mu\geq 0$, $\hat{\mu}_{\alpha,\delta}$ denotes the weighted averaging function over pseudohyperbolic balls with radius $\delta$, and $\widetilde{\mu}_{\Phi,\alpha,2}$ denotes the $(\Phi,\alpha,2)$-Berezin transform of $\mu$ for $\Phi>-1$ and $\alpha \in \mathbb{R}$. See Section \ref{s-carleson} for relevant definitions. Note that a sequence $\{a_{k}\}$ will always refer to the sequence chosen in Lemma 2.4 below.
The next theorem is the main result.
\begin{theorem}\label{Theorem-3}
Let $1\leq p<\infty$ and $\alpha \in \mathbb{R}$, and $s$ satisfying $2s-\alpha>-1$ be given.
Let $\mu$ be a positive Borel measure on $\mathbb{B}$, and put $u=s-\alpha$ and $\Phi=2s-\alpha$. Then the following statements are equivalent:
\begin{enumerate}
\item[(i)] ${_{s,u}}T_{\mu}:b^{2}_{\alpha}\to b^{2}_{\alpha}$ belongs to $S_{p}$.
\item[(ii)] The $(\Phi,\alpha,2)$-Berezin transform $\widetilde{\mu}_{\Phi,\alpha,2}$ belongs to $L^{p}_{-n}$.
\item[(iii)] The weighted averaging function $\widehat{\mu}_{\alpha,\delta}$ belongs to $L^{p}_{-n}$.
\item[(iv)] The sequence $\{\widehat{\mu}_{\alpha,\delta}(a_{k})\}$ belongs to $\ell^{p}$.
\end{enumerate}
\end{theorem}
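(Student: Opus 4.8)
The plan is to establish the four conditions equivalent by first treating the three ``scalar'' equivalences (ii)$\Leftrightarrow$(iii)$\Leftrightarrow$(iv), which say nothing about the operator, and then tying these to (i) through the lattice $\{a_k\}$ of Lemma~2.4. Observe at the outset that the choices $u=s-\alpha$ and $\Phi=2s-\alpha$ are made precisely so that $\alpha+2u=\Phi>-1$ and $\alpha+1<2(s+1)$ hold at once (both amount to the hypothesis $2s-\alpha>-1$); hence $I^u_s$ isometrically imbeds $b^2_\alpha$ into $L^2_\alpha$, $Q_s$ is bounded from $L^2_\alpha$ onto $b^2_\alpha$, and ${_{s,u}}T_\mu$ is a densely defined positive operator on $b^2_\alpha$. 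A direct computation with the reproducing property of $R_\alpha$ and the action of $D^u_s$ on $R_\alpha$ shows that, for the normalized kernel $k_{\alpha,x}=R_\alpha(\cdot,x)/\sqrt{R_\alpha(x,x)}$, one has $\langle {_{s,u}}T_\mu\,k_{\alpha,x},k_{\alpha,x}\rangle_{b^2_\alpha}=\widetilde\mu_{\Phi,\alpha,2}(x)$; that is, the function in (ii) is the Berezin transform of the operator ${_{s,u}}T_\mu$, which is essentially how $\widetilde\mu_{\Phi,\alpha,2}$ is defined in Section~\ref{s-carleson}.

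For (iii)$\Leftrightarrow$(iv), I would use that a pseudohyperbolic ball of fixed radius about $x$ has $\nu$-measure $\asymp(1-|x|^2)^n$, so its $\nu_{-n}$-mass is bounded above and below; combining this with the covering and finite-overlap properties of $\{a_k\}$ and with the comparability of $\widehat\mu_{\alpha,\delta}$ on a ball (i.e.\ $\widehat\mu_{\alpha,\delta'}(y)\asymp\widehat\mu_{\alpha,\delta}(x)$ whenever $y$ lies in the $\delta$-ball about $x$ and $\delta'>\delta$, which follows from the nesting of balls of comparable radii) reduces $\|\widehat\mu_{\alpha,\delta}\|^p_{L^p_{-n}}$ to $\sum_k\widehat\mu_{\alpha,\delta}(a_k)^p$. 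For (ii)$\Leftrightarrow$(iii), the pointwise inequality $\widehat\mu_{\alpha,\delta}(x)\lesssim\widetilde\mu_{\Phi,\alpha,2}(x)$ comes from bounding the kernel from below on a fixed ball about $x$, giving (ii)$\Rightarrow$(iii); for the converse, writing $\widetilde\mu_{\Phi,\alpha,2}$ as an integral against $d\mu$ and replacing $\mu$ on each cell of the lattice by its total mass placed at $a_k$ dominates $\widetilde\mu_{\Phi,\alpha,2}$ by a Bergman-type integral operator applied to $\widehat\mu_{\alpha,\delta}$; the size estimates for the harmonic Bergman-Besov kernel then let a Schur test (or the known $L^p_{-n}$-boundedness of such operators) yield $\|\widetilde\mu_{\Phi,\alpha,2}\|_{L^p_{-n}}\lesssim\|\widehat\mu_{\alpha,\delta}\|_{L^p_{-n}}$, hence (iii)$\Rightarrow$(ii).

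For (i)$\Rightarrow$(iv), I would invoke the diagonalization principle, and it is here that $p\ge1$ is used. By the sampling inequality for $b^2_\alpha$ there are weights $c_k\asymp(1-|a_k|^2)^{(n+\alpha)/2}$ for which $Af:=\{c_k f(a_k)\}=\{c_k\langle f,R_\alpha(\cdot,a_k)\rangle_{b^2_\alpha}\}$ defines a bounded operator $A:b^2_\alpha\to\ell^2$. If ${_{s,u}}T_\mu\in S_p$, then $A\,{_{s,u}}T_\mu\,A^*\in S_p(\ell^2)$ by the ideal property, with $S_p$-norm at most $\|A\|^2\|{_{s,u}}T_\mu\|_{S_p}$; its $k$-th diagonal entry equals $c_k^2R_\alpha(a_k,a_k)\,\widetilde\mu_{\Phi,\alpha,2}(a_k)$, which by $R_\alpha(a_k,a_k)\asymp(1-|a_k|^2)^{-(n+\alpha)}$ is $\asymp\widetilde\mu_{\Phi,\alpha,2}(a_k)$ and hence dominates a constant multiple of $\widehat\mu_{\alpha,\delta}(a_k)$ by the previous paragraph. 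Since the diagonal of an operator in $S_p(\ell^2)$ lies in $\ell^p$ for $p\ge1$ (the diagonal projection being contractive on $S_p$), we obtain $\{\widehat\mu_{\alpha,\delta}(a_k)\}\in\ell^p$, which is (iv).

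Finally, for (iv)$\Rightarrow$(i), I would use Luecking's disjointification. Pick Borel sets $E_k$ with $a_k\in E_k$, each $E_k$ inside the $\delta$-ball about $a_k$, and $\bigsqcup_kE_k=\mathbb{B}$, so that $\mu=\sum_k\mu_k$ with $\mu_k=\mu|_{E_k}$ and ${_{s,u}}T_\mu=\sum_k{_{s,u}}T_{\mu_k}$. From $\mathrm{tr}({_{s,u}}T_{\mu_k})\asymp\widehat\mu_{\alpha,\delta}(a_k)$ and the local Carleson estimate $\|{_{s,u}}T_{\mu_k}\|\lesssim\widehat\mu_{\alpha,\delta}(a_k)$ supplied by Theorem~\ref{Theorem-1}, the interpolation inequality $\|B\|_{S_p}\le\|B\|_{S_1}^{1/p}\|B\|^{1-1/p}$ gives $\|{_{s,u}}T_{\mu_k}\|_{S_p}\lesssim\widehat\mu_{\alpha,\delta}(a_k)$. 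Splitting $\mathbb{N}$ into finitely many subsets $N_1,\dots,N_M$ on each of which the points $\{a_k\}$ are separated by a large pseudohyperbolic distance, the off-diagonal decay of the Bergman-Besov kernel makes $\{{_{s,u}}T_{\mu_k}\}_{k\in N_j}$ behave like a direct sum, so $\|\sum_{k\in N_j}{_{s,u}}T_{\mu_k}\|^p_{S_p}\lesssim\sum_{k\in N_j}\|{_{s,u}}T_{\mu_k}\|^p_{S_p}$; summing the triangle inequality over $j$ gives ${_{s,u}}T_\mu\in S_p$ with $\|{_{s,u}}T_\mu\|^p_{S_p}\lesssim M^p\sum_k\widehat\mu_{\alpha,\delta}(a_k)^p<\infty$. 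The principal obstacle is this sufficiency direction: both the single-cell bound $\|{_{s,u}}T_{\mu_k}\|_{S_p}\lesssim\widehat\mu_{\alpha,\delta}(a_k)$ and the almost-orthogonality of well-separated blocks rest on sharp pointwise and off-diagonal estimates for $R_s(x,y)$ and its behavior under $D^u_s$ valid for every $\alpha\in\mathbb{R}$, not only $\alpha>-1$; these, together with the sampling/Bessel inequality for $b^2_\alpha$ and the precise comparisons among the Berezin transform, the averaging function and the operator diagonal at the lattice points, form the technical heart of the argument.
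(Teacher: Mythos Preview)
Your scalar equivalences (ii)$\Leftrightarrow$(iii)$\Leftrightarrow$(iv) are in the same spirit as the paper's, though the paper never proves (iii)$\Rightarrow$(ii) directly; it closes the loop through (i) instead. One small slip: for general $\alpha\in\mathbb{R}$ the normalized reproducing kernel that makes the Berezin identity exact is $g_y=R_s(\cdot,y)/\|R_s(\cdot,y)\|_{b^2_\alpha}$ (built from $R_s$, not $R_\alpha$), and the paper verifies $[{_{s,u}}T_\mu\,g_y,g_y]_{b^2_\alpha}=\widetilde\mu_{\Phi,\alpha,2}(y)$ via $D^u_s R_s=R_\Phi$. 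Your $k_{\alpha,x}$ only agrees with this when $\alpha>-1$ and $u=0$.

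Where you genuinely diverge from the paper is in linking (i) with the rest. For necessity the paper goes (i)$\Rightarrow$(ii): it uses a trace formula $\mathrm{tr}(T)\sim\int_{\mathbb B}[T g_y,g_y]_{b^2_\alpha}\,d\nu_{-n}$ together with the operator-convexity inequality $[T^p g_y,g_y]\ge [Tg_y,g_y]^p$ (valid for positive $T$ and $p\ge1$) to get $\|{_{s,u}}T_\mu\|_{S_p}^p\ge\int\widetilde\mu_{\Phi,\alpha,2}^p\,d\nu_{-n}$. Your sampling/diagonal-projection route (i)$\Rightarrow$(iv) is also valid and is a nice alternative; it trades the trace formula for the Bessel inequality of the frame $\{R_\alpha(\cdot,a_k)\}$ and the $S_p$-contractivity of the diagonal map.

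The real divergence is sufficiency. The paper's argument (iii)$\Rightarrow$(i) is short and clean: it proves the operator inequality
\[
{_{s,u}}T_\mu\ \le\ C_\delta\ {_{s,u}}T_{\widehat\mu_{\alpha,\delta}}
\]
by a direct Fubini computation with the subharmonicity estimate, and separately shows that for a \emph{function} symbol $\phi\in L^p_{-n}$ one has ${_{s,u}}T_\phi\in S_p$ (this is reduced, via the intertwining $D^u_s\,{_{s,u}}T_\phi={_\Phi}T_\phi\,D^u_s$, to the classical Bergman case on $b^2_\Phi$ with $\Phi>-1$). Then one passes from the function case to the measure case by monotonicity of $\sum_q[Te_q,e_q]^p$ in $T\ge0$. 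No decomposition of $\mu$, no almost-orthogonality, no off-diagonal kernel estimates are needed.

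Your Luecking-style route (iv)$\Rightarrow$(i) can be made to work, but the step you flag as the ``principal obstacle'' is exactly where it is fragile: asserting that $\bigl\|\sum_{k\in N_j}{_{s,u}}T_{\mu_k}\bigr\|_{S_p}^p\lesssim\sum_{k\in N_j}\|{_{s,u}}T_{\mu_k}\|_{S_p}^p$ from ``off-diagonal decay'' is not free for $p\ge1$; one typically needs a Cotlar--Stein type estimate or a genuine block-diagonal comparison, and the harmonic kernels $R_s$ do not factor as nicely as the holomorphic ones, so this requires real work. The paper sidesteps all of this with the single pointwise operator inequality above, which is both simpler and intrinsically adapted to positivity. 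If you want a self-contained sufficiency proof, I would recommend replacing your decomposition argument by that inequality.
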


The \textit{holomorphic} analogues of our characterizations for \textit{holomorphic} Dirichlet spaces have been obtained   in \cite{AK}; these results are all concerned with Toeplitz operators from a Dirichlet space into itself.

The paper is organized as follows. The notation and some preliminary results
are summarized in Section  \ref{s-preliminaries}.   We will recall various characterizations of (vanishing) $(\lambda,\alpha)$-Bergman-Carleson measures for weighted harmonic Bergman spaces in Section \ref{s-carleson}.
Sections \ref{proof1} and \ref{proof2} are devoted to the proof of our main results, Theorem \ref{Theorem-1} and \ref{Theorem-2}, respectively. Finaly in Section \ref{Positive S-C Toeplitz Op}, we will prove Theorem \ref{Theorem-3}.
Our results attest to the fact that the Toeplitz operators between harmonic Bergman-
Besov spaces  are natural extensions of classical Bergman-space Toeplitz operators on harmonic Bergman spaces.

In the following for two positive expressions $X$ and $Y$ we write $X\lesssim Y$ if there exists a positive constant $C$, whose exact value is inessential, such that $X\leq CY$. If both $X\lesssim Y$ and $Y\lesssim X$, we write $X\sim Y$.
\section{Preliminaries}\label{s-preliminaries}

In this section we collect some known facts that will be used in later sections.

The Pochhammer symbol $(a)_b$ is defined by
\[
(a)_b=\frac{\Gamma(a+b)}{\Gamma(a)},
\]
when $a$ and $a+b$ are off the pole set $-\mathbb{N}$ of the gamma function. By Stirling formula,

\begin{equation}\label{Stirling}
\frac{(a)_c}{(b)_c} \sim c^{a-b} \quad (c\to\infty).
\end{equation}
Define the Rademacher functions $r_k$ on $\mathbb{R}$ by

\begin{align*}
  r_1(\tau) &=
  \begin{cases}
  1, &\text{if $0\leq \tau-\lfloor\tau\rfloor < 1/2$},\\
  \noalign{\medskip}
  -1, &\text{if $1/2 \leq \tau-\lfloor\tau\rfloor <1$}
  \end{cases} \\
  \noalign{\medskip}
  r_k(\tau)&= r_1(2^{k-1} \tau) \quad (k=2,3,\dots).
\end{align*}
Let $\{c_k\}\in \ell^2$ be a sequence of complex numbers and $f(\tau)=\sum_{k=1}^{\infty} c_k r_k(\tau)$. Khinchine's inequality states that for any $0<q<\infty$, the $L^q[0,1]$ norm of $f$ is comparable to the $\ell^2$ norm of $\{c_k\}$.

\begin{lemma}[Khinchine's Inequality]
Let $0<q<\infty$ and $\{c_k\}\in\ell^2$. The series $\sum_{k=1}^\infty c_k r_k(\tau)$ converges almost everywhere and if $f(t)=\sum_{k=1}^\infty c_k r_k(\tau)$, then
\[
\left(\int_0^1 |f(\tau)|^q\, dt\right)^{1/q} \sim \left(\sum_{k=1}^\infty |c_k|^2\right)^{1/2}.
\]
\end{lemma}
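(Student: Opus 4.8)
The plan is to prove the two–sided estimate by first settling the case $q=2$ exactly and then bootstrapping to all $q$ via an exponential moment bound and a simple interpolation. Throughout we may assume the $c_k$ are real: for complex $c_k$ write $f=g+ih$ with $g=\sum (\operatorname{Re}c_k)r_k$ and $h=\sum (\operatorname{Im}c_k)r_k$, note $\|g\|_{L^2}^2+\|h\|_{L^2}^2=\sum|c_k|^2$, and combine the real case with the triangle inequality in $L^q$ (and its reverse, supplied by the $q=2$ comparison below). We may also assume $\{c_k\}$ has finite support and pass to the limit at the end; the a.e.\ convergence of $\sum c_k r_k$ is exactly Kolmogorov's theorem for sums of independent, mean–zero random variables with summable variances, since on the probability space $[0,1]$ the $r_k$ are i.i.d.\ symmetric $\pm1$ variables and $\sum|c_k|^2<\infty$.

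For $q=2$ the Rademacher system is orthonormal in $L^2[0,1]$, so $\int_0^1|f|^2\,d\tau=\sum_k|c_k|^2$ identically. Since $[0,1]$ has total mass $1$, monotonicity of $L^q$ norms then gives $\|f\|_{L^q}\le\|f\|_{L^2}$ for $0<q\le2$ and $\|f\|_{L^q}\ge\|f\|_{L^2}$ for $q\ge2$. Hence it remains to prove (a) $\|f\|_{L^q}\lesssim(\sum|c_k|^2)^{1/2}$ for $q>2$, and (b) $\|f\|_{L^q}\gtrsim(\sum|c_k|^2)^{1/2}$ for $0<q<2$. Normalize $\sum|c_k|^2=1$ for part (a). Independence of the $r_k$ together with the elementary estimate $\cosh x\le e^{x^2/2}$ yields the \emph{subgaussian} bound $\int_0^1 e^{\lambda f}\,d\tau=\prod_k\cosh(\lambda c_k)\le e^{\lambda^2/2}$ for every $\lambda\in\mathbb{R}$; Markov's inequality and optimization in $\lambda$ then give the tail bound $|\{\tau:|f(\tau)|>t\}|\le 2e^{-t^2/2}$. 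Integrating $\|f\|_q^q=q\int_0^\infty t^{q-1}|\{|f|>t\}|\,dt$ against $\min(1,2e^{-t^2/2})$ produces $\|f\|_q\le C\sqrt q$ with an absolute constant $C$, which is (a). For (b), fix $0<q<2$, choose any $r>2$, and write $\tfrac12=\tfrac{\theta}{q}+\tfrac{1-\theta}{r}$ with $\theta\in(0,1)$. Hölder's inequality gives $\|f\|_2\le\|f\|_q^{\theta}\|f\|_r^{1-\theta}$, and inserting $\|f\|_r\le C\sqrt r\,\|f\|_2$ from part (a) leads to $\|f\|_2^{\theta}\le (C\sqrt r)^{1-\theta}\|f\|_q^{\theta}$, i.e.\ $\|f\|_2\lesssim_q\|f\|_q$. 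Together with the $q=2$ comparisons this establishes the equivalence for finitely supported $\{c_k\}$; a routine limiting step, using a.e.\ convergence with Fatou's lemma in one direction and monotone convergence of the partial sums $\sum_{k\le N}|c_k|^2$ on the right-hand side (plus uniform integrability from the $L^2$ bound when $q<2$), extends it to all of $\ell^2$.

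The main obstacle is part (a): the exponential moment estimate is where the actual content of Khinchine's inequality sits, whereas the $q=2$ identity, the monotonicity comparisons, the interpolation giving the lower bound, and the a.e.\ convergence are all soft. If one prefers to avoid tail integration, an equally elementary route to (a) is to bound the even moments directly: expanding $f^{2m}$ and using the multinomial theorem with $\sum c_k^2=1$ gives $\int_0^1 f^{2m}\,d\tau\le (2m-1)!!\le C^m m!$, and then $L^q$ for $2<q<2(m{+}1)$ is controlled by interpolating between $L^{2m}$ and $L^{2(m+1)}$. Either way, once (a) is in hand the rest of the argument is mechanical.
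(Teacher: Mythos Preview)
Your proof is correct and self-contained. The paper, however, does not give a proof of this lemma at all: immediately after the statement it simply writes ``A proof of Khinchine's inequality can be found in \cite[\S V.8]{ZY}'' and moves on. So there is no approach in the paper to compare against; you have supplied strictly more than the paper does. The route you take---orthonormality for $q=2$, the subgaussian/exponential-moment bound for the upper estimate when $q>2$, and log-convexity of $L^p$ norms to extract the lower estimate for $q<2$---is one of the standard modern proofs, and every step is sound (the complex-to-real reduction, the appeal to Kolmogorov's theorem for a.e.\ convergence, and the limiting argument from finite support all go through as you describe).
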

A proof of Khinchine's inequality can be found in \cite[\S V.8]{ZY}.

A harmonic function $f$ on $\mathbb{B}$ has a homogeneous expansion, that is, there exist homogeneous harmonic polynomials $f_k$ of degree $k$ such that $f(x)=\sum_{k=0}^\infty f_k(x)$. The series uniformly and absolutely converges on compact subsets of $\mathbb{B}$.

\subsection{Pseudohyperbolic metric}

The canonical M\"obius transformation on $\mathbb{B}$ that exchanges $a$ and $0$ is
\[
\varphi_a(x) = \frac{(1-|a|^2) (a-x) + |a-x|^2 a}{[x,a]^2}.
\]
Here the bracket $[x,a]$ is defined by
\[
[x,a]=\sqrt{1-2 x\cdot a+|x|^2 |a|^2},
\]
where $x\cdot a$ denotes the inner product of $x$ and $a$ in $\mathbb{R}^n.$ The pseudohyperbolic distance between $x,y\in \mathbb{B}$ is
\[
\rho(x,y)=|\varphi_x(y)|=\frac{|x-y|}{[x,y]}.
\]

For a proof of the following lemma see \cite[Lemma 2.2]{CKL}.

\begin{lemma}\label{Inequality}
Let $a,x,y\in \mathbb{B}$. Then

\[
\frac{1-\rho(x,y)}{1+\rho(x,y)} \leq \frac{[x,a]}{[y,a]} \leq \frac{1+\rho(x,y)}{1-\rho(x,y)}.
\]
\end{lemma}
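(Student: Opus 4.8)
The plan is to reduce the two-sided estimate to its upper half and then prove that by a short chain of elementary inequalities, using only two exact identities for the bracket $[\cdot,\cdot]$. Since $\rho(x,y)=\rho(y,x)$, interchanging the roles of $x$ and $y$ turns the upper bound $[x,a]/[y,a]\le(1+\rho(x,y))/(1-\rho(x,y))$ into the lower bound, so it is enough to prove the upper bound. The case $a=0$ is trivial, because then $[x,a]=[y,a]=1$ while $(1+\rho)/(1-\rho)\ge 1$; so assume $a\neq 0$ and write $a^{*}=a/|a|^{2}$ for the reflection of $a$ across the unit sphere. The two identities I would record first, both by direct expansion, are
\[
[x,a]^{2}=1-2x\cdot a+|x|^{2}|a|^{2}=|a|^{2}\,|x-a^{*}|^{2},\qquad [x,y]^{2}-|x-y|^{2}=(1-|x|^{2})(1-|y|^{2}).
\]
The first gives $[x,a]=|a|\,|x-a^{*}|$; the second shows $[x,y]>|x-y|$ for $x,y\in\mathbb{B}$ and lets me rewrite $(1+\rho(x,y))/(1-\rho(x,y))=([x,y]+|x-y|)/([x,y]-|x-y|)$.

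Next I would apply the ordinary triangle inequality $|x-a^{*}|\le|x-y|+|y-a^{*}|$ and divide by $|y-a^{*}|=[y,a]/|a|$, which yields
\[
\frac{[x,a]}{[y,a]}=\frac{|x-a^{*}|}{|y-a^{*}|}\le 1+\frac{|a|\,|x-y|}{[y,a]}.
\]
Since $([x,y]+|x-y|)/([x,y]-|x-y|)=1+2|x-y|/([x,y]-|x-y|)$, the desired bound follows (the case $x=y$ being trivial, as then $\rho=0$) once I show the single inequality
\[
|a|\bigl([x,y]-|x-y|\bigr)\le 2[y,a].
\]

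To prove this I would treat the two sides separately. For the left side, the second identity gives $[x,y]-|x-y|=(1-|x|^{2})(1-|y|^{2})/([x,y]+|x-y|)$, and I claim $1-|x|^{2}\le[x,y]+|x-y|$; granting this, $[x,y]-|x-y|\le 1-|y|^{2}$. The claim reduces to $[x,y]\ge(1-|x|^{2})-|x-y|$, which is clear if the right-hand side is nonpositive, and otherwise follows by squaring and applying the second identity: it collapses to $2|x-y|\ge|y|^{2}-|x|^{2}$, true because $|x-y|\ge\bigl||x|-|y|\bigr|$ and $|x|+|y|<2$. For the right side, from the first identity $[y,a]^{2}=1-2y\cdot a+|y|^{2}|a|^{2}\ge(1-|y||a|)^{2}$, so $[y,a]\ge 1-|y||a|$; hence it suffices that $|a|(1-|y|^{2})\le 2(1-|y||a|)$, and this rearranges to $|a|\bigl(2-(1-|y|)^{2}\bigr)\le 2$, which holds since $|a|<1$ and $2-(1-|y|)^{2}\le 2$. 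Chaining these gives $|a|([x,y]-|x-y|)\le|a|(1-|y|^{2})\le 2[y,a]$, which finishes the argument.

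The step I expect to require the most care is the innocuous-looking inequality $1-|x|^{2}\le[x,y]+|x-y|$, and, more generally, keeping the crude triangle-inequality step from being too lossy: one cannot, for instance, simply discard the factor $|a|$ or replace $(1-|x|^{2})(1-|y|^{2})$ by $[x,y]^{2}$, since either weakening produces an inequality that fails for points near the boundary of $\mathbb{B}$. Placing the two exact identities correctly is precisely what lets the rough estimate recover the sharp constant $(1+\rho)/(1-\rho)$. An alternative derivation runs through the M\"obius-invariance identity $[\varphi_{y}(x),\varphi_{y}(a)]=(1-|y|^{2})[x,a]/\bigl([x,y][y,a]\bigr)$, which gives $[x,a]/[y,a]=[\varphi_{y}(x),\varphi_{y}(a)]/[\varphi_{y}(x),y]$; then $|\varphi_{y}(x)|=\rho(x,y)$ together with $|\varphi_{y}(a)|<1$ and the Cauchy--Schwarz inequality bound the numerator above by $1+\rho$ and the denominator below by $1-\rho$. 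The elementary route above has the advantage of not invoking any property of $\varphi_{y}$ beyond its defining formula.
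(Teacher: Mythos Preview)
Your argument is correct. The two bracket identities are standard and your chain of inequalities is clean; the only place worth a second glance is the step $1-|x|^{2}\le[x,y]+|x-y|$, and your justification via squaring and the reverse triangle inequality is valid, so the bound $|a|\bigl([x,y]-|x-y|\bigr)\le|a|(1-|y|^{2})\le 2(1-|y||a|)\le 2[y,a]$ goes through as written.

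The paper itself does not prove this lemma at all: it simply cites \cite[Lemma~2.2]{CKL}. The argument in that reference proceeds along the lines of your \emph{alternative} route, exploiting the M\"obius-type identity for the bracket together with the elementary bounds $1-|u||v|\le[u,v]\le1+|u||v|$ applied with $u=\varphi_{y}(x)$ (so that $|u|=\rho(x,y)$). What your primary argument buys is that it avoids any appeal to properties of $\varphi_{y}$ and stays entirely at the level of the Euclidean triangle inequality and the two algebraic identities for $[\cdot,\cdot]$; the M\"obius route, on the other hand, is shorter once the invariance identity is in hand and makes the appearance of $\rho(x,y)$ more transparent. Either way the constants are sharp, and your write-up would serve perfectly well as a self-contained replacement for the citation.
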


The following two lemmas show that if $x,y\in\mathbb{B}$ are close in the pseudohyperbolic metric, then certain quantities are comparable. Both of them easily follow from Lemma \ref{Inequality} (note that $[x,x]=1-|x|^2$).

\begin{lemma}\label{x-y-close}
Let $0<\delta<1$. Then
\[
[x,y]\sim 1-|x|^2 \sim 1-|y|^2,
\]
for all $x,y\in \mathbb{B}$ with $\rho(x,y)<\delta$.
\end{lemma}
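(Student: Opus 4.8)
The statement to prove is Lemma~\ref{x-y-close}: for fixed $0<\delta<1$, we have $[x,y]\sim 1-|x|^2\sim 1-|y|^2$ whenever $\rho(x,y)<\delta$.

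The plan is to derive everything directly from Lemma~\ref{Inequality} by making suitable choices of the free point $a$. First I would record the trivial but crucial identity $[x,x]=1-|x|^2$, which is immediate from the definition $[x,a]=\sqrt{1-2x\cdot a+|x|^2|a|^2}$ upon setting $a=x$. Now, to compare $[x,y]$ with $1-|y|^2=[y,y]$, I apply Lemma~\ref{Inequality} with $a=y$: the lemma gives
\[
\frac{1-\rho(x,y)}{1+\rho(x,y)}\le \frac{[x,y]}{[y,y]}\le \frac{1+\rho(x,y)}{1-\rho(x,y)},
\]
using the symmetry $[x,y]=[y,x]$ in the lower bound. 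Since $\rho(x,y)<\delta<1$, the outer quantities are bounded above by $(1+\delta)/(1-\delta)$ and below by $(1-\delta)/(1+\delta)$, both positive constants depending only on $\delta$. Hence $[x,y]\sim 1-|y|^2$ with comparability constants depending only on $\delta$. Swapping the roles of $x$ and $y$ (i.e.\ taking $a=x$ in Lemma~\ref{Inequality}) gives in exactly the same way $[x,y]\sim 1-|x|^2$.

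Finally, combining the two comparabilities $[x,y]\sim 1-|x|^2$ and $[x,y]\sim 1-|y|^2$ yields $1-|x|^2\sim 1-|y|^2$ as well (comparability is transitive, and all constants remain functions of $\delta$ alone), which completes the proof of the chain $[x,y]\sim 1-|x|^2\sim 1-|y|^2$.

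There is essentially no obstacle here: the entire content is already packaged in Lemma~\ref{Inequality}, and the only thing to be careful about is the bookkeeping of which point is substituted for $a$ and the use of the symmetry of the bracket. The one point worth stating explicitly is that the comparability constants are uniform over all pairs $x,y$ with $\rho(x,y)<\delta$, since they arise only from evaluating the monotone function $r\mapsto (1+r)/(1-r)$ at $r=\delta$.
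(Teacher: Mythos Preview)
Your proof is correct and is exactly the argument the paper has in mind: it states that the lemma follows from Lemma~\ref{Inequality} together with the identity $[x,x]=1-|x|^2$, which is precisely what you carry out by specializing $a=x$ and $a=y$. The remark about using the symmetry $[x,y]=[y,x]$ is actually only needed in the second application (with $a=x$), not the first, but this is a cosmetic point and does not affect the argument.
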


\begin{lemma}\label{Bracket-Hyper}
Let $0<\delta<1$. Then
\[
[x,a] \sim [y,a],
\]
for all $a,x,y\in \mathbb{B}$ with $\rho(x,y)<\delta$.
\end{lemma}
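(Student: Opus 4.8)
The plan is to derive this directly from Lemma \ref{Inequality}, which already gives the two-sided bound
\[
\frac{1-\rho(x,y)}{1+\rho(x,y)} \leq \frac{[x,a]}{[y,a]} \leq \frac{1+\rho(x,y)}{1-\rho(x,y)}
\]
for all $a,x,y\in\mathbb{B}$. Since the definition $\rho(x,y)=|\varphi_x(y)|$ gives $0\le\rho(x,y)<1$, both outer quantities are finite and positive whenever $\rho(x,y)<\delta<1$, so the ratio $[x,a]/[y,a]$ is pinched between two positive numbers. The only substantive point is to replace the $\rho(x,y)$-dependent bounds by constants that depend on $\delta$ alone, uniformly over all admissible triples $a,x,y$.

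To do this I would observe that on the interval $[0,1)$ the function $t\mapsto (1-t)/(1+t)$ is decreasing while $t\mapsto (1+t)/(1-t)$ is increasing. Hence, for any $x,y$ with $\rho(x,y)<\delta$, monotonicity yields
\[
\frac{1-\delta}{1+\delta} \le \frac{1-\rho(x,y)}{1+\rho(x,y)} \qquad\text{and}\qquad \frac{1+\rho(x,y)}{1-\rho(x,y)} \le \frac{1+\delta}{1-\delta}.
\]
Combining these with the bound from Lemma \ref{Inequality} gives
\[
\frac{1-\delta}{1+\delta} \le \frac{[x,a]}{[y,a]} \le \frac{1+\delta}{1-\delta},
\]
where both constants are strictly positive and depend only on $\delta$. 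This is exactly the assertion $[x,a]\sim[y,a]$, with comparability constants $C=(1+\delta)/(1-\delta)$ (and its reciprocal) independent of the particular points.

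There is no real obstacle here: the content is entirely contained in Lemma \ref{Inequality}, and the remaining work is the elementary monotonicity observation that turns the pointwise estimate into a uniform one. (One could alternatively phrase the same argument by writing $[x,a]/[y,a]$ and its reciprocal and bounding each above by $(1+\delta)/(1-\delta)$, which is the cleaner symmetric formulation and makes the ``$\sim$'' conclusion immediate.) For completeness I would note that the hypothesis $\rho(x,y)<\delta$ is used only through this monotonicity, so the constant genuinely deteriorates as $\delta\to1$, which is consistent with the role this lemma plays in later localization arguments over pseudohyperbolic balls of fixed radius.
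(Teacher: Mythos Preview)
Your proof is correct and follows exactly the approach indicated in the paper, which simply states that the lemma ``easily follow[s] from Lemma \ref{Inequality}''; you have filled in the elementary monotonicity step that the paper leaves implicit.
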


For $0<\delta<1$ and $x\in\mathbb{B}$ we denote the pseudohyperbolic ball with center $x$ and radius $\delta$ by $E_\delta(x)$. The pseudohyperbolic ball $E_\delta(x)$ is also a Euclidean ball with center $c$ and radius $r$, where
\[
c=\frac{(1-\delta^2)x}{1-\delta^2|x|^2} \qquad \text{and} \qquad r=\frac{(1-|x|^2)\delta}{1-\delta^2|x|^2}.
\]
It follows that for fixed $0<\delta<1$, we have $\nu(E_\delta(x))\sim (1-|x|^2)^n$. More generally, for $\alpha\in \mathbb{R}$, by Lemma \ref{x-y-close}
\begin{equation}\label{hyper-volume}
\nu_\alpha(E_\delta(x))=\frac{1}{V_\alpha}  \int_{E_\delta(x)} (1-|y|^2)^\alpha \, d\nu(y) \sim (1-|x|^2)^\alpha \nu(E_\delta(x)) \sim (1-|x|^2)^{\alpha+n}.
\end{equation}

Let $\{a_k\}$ be a sequence of points in $\mathbb{B}$ and $0<\delta<1$. We say that $\{a_k\}$ is $\delta$-separated if $\rho(a_j,a_k)\geq\delta$ for all $j\neq k$. For a proof of the following lemma see, for example, \cite{L2}.
\begin{lemma}\label{ak}
  Let $0<\delta<1$. There exists a sequence of points $\{a_k\}$ in $\mathbb{B}$ satisfying the following properties:
  \begin{enumerate}
    \item[(i)] $\{a_k\}$ is $\delta$-separated.
    \item[(ii)] $\displaystyle \bigcup_{k=1}^\infty E_\delta(a_k) = \mathbb{B}$.
    \item[(iii)] There exists a positive integer $N$ such that every $x\in \mathbb{B}$ belongs to at most $N$ of the balls $E_\delta(a_k)$.
  \end{enumerate}
\end{lemma}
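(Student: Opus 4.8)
\emph{Proof proposal.} The plan is to obtain $\{a_k\}$ as a maximal $\delta$-separated subset of $\mathbb{B}$ and then verify the three properties, the finite-overlap property (iii) being the only substantial point.

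First I would construct the sequence. Consider the family of all $\delta$-separated subsets of $\mathbb{B}$, i.e.\ subsets $A$ with $\rho(a,a')\geq\delta$ for distinct $a,a'\in A$, partially ordered by inclusion. The union of any chain is again $\delta$-separated, so Zorn's lemma yields a maximal element $A$ (a greedy exhaustion works equally well). Any such $A$ is at most countable: the balls $E_{\delta/2}(a)$, $a\in A$, are pairwise disjoint --- if $y\in E_{\delta/2}(a)\cap E_{\delta/2}(a')$ then the triangle inequality for the pseudohyperbolic metric forces $\rho(a,a')<\delta$ --- and a disjoint family of sets of positive $\nu$-measure in the finite measure space $(\mathbb{B},\nu)$ is countable. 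Enumerate $A=\{a_k\}$. Property (i) is then immediate. For (ii), let $x\in\mathbb{B}$: if $x\in A$ then $x\in E_\delta(x)$ trivially, and otherwise maximality means $A\cup\{x\}$ is not $\delta$-separated, so $\rho(x,a_k)<\delta$ for some $k$, i.e.\ $x\in E_\delta(a_k)$. Hence $\bigcup_k E_\delta(a_k)=\mathbb{B}$.

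The heart of the matter is (iii). Since $\rho$ is symmetric, $x\in E_\delta(a_k)$ if and only if $a_k\in E_\delta(x)$, so (iii) is equivalent to the packing statement that every ball $E_\delta(x)$ contains at most $N$ of the points $a_k$, with $N$ independent of $x$. Fix $x$ and set $S=\{k: a_k\in E_\delta(x)\}$. The half-balls $E_{\delta/2}(a_k)$, $k\in S$, are pairwise disjoint by separation, and by the triangle inequality each is contained in a single dilated ball $E_{\delta_1}(x)$ with $\delta_1<1$ depending only on $\delta$; here it is cleanest to pass to the hyperbolic distance $\beta=\log\frac{1+\rho}{1-\rho}$, for which $E_\delta(x)$ is a genuine metric ball and radii add without a ceiling. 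Moreover, for $k\in S$ Lemma \ref{x-y-close} gives $1-|a_k|^2\sim 1-|x|^2$, so by (\ref{hyper-volume}) we have $\nu(E_{\delta/2}(a_k))\sim(1-|x|^2)^n$ uniformly in $k\in S$, while $\nu(E_{\delta_1}(x))\sim(1-|x|^2)^n$. Summing the disjoint half-balls inside $E_{\delta_1}(x)$,
\[
|S|\,(1-|x|^2)^n\lesssim\sum_{k\in S}\nu\bigl(E_{\delta/2}(a_k)\bigr)=\nu\Bigl(\bigcup_{k\in S}E_{\delta/2}(a_k)\Bigr)\leq\nu\bigl(E_{\delta_1}(x)\bigr)\lesssim(1-|x|^2)^n,
\]
and cancelling the common factor bounds $|S|$ by a constant $N=N(\delta,n)$, which is (iii).

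I expect the main obstacle to be the geometric bookkeeping in (iii): establishing the triangle inequality for the pseudohyperbolic metric (equivalently, that each $E_\delta(x)$ is a true metric ball for the M\"obius-invariant hyperbolic distance), so that separated points have disjoint half-balls sitting inside one controlled ball around $x$, and then keeping the volume comparisons uniform over $k\in S$. Both pieces reduce to Lemma \ref{Inequality} and Lemma \ref{x-y-close} together with the M\"obius invariance of $\rho$, and all implied constants depend only on $\delta$ and $n$, which is precisely what properties (i)--(iii) demand.
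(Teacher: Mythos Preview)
Your argument is correct and is the standard proof: take a maximal $\delta$-separated set, get covering from maximality, and deduce finite overlap by a volume-packing comparison using (\ref{hyper-volume}) and Lemma \ref{x-y-close}. The paper does not supply its own proof of this lemma; it simply refers the reader to \cite{L2}, where exactly this construction (maximal separated sequence plus a disjoint-half-ball volume count) is carried out. So your approach coincides with the one in the cited reference; there is nothing to add.
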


In what follows whenever we use expressions like $\widehat{\mu}_{\alpha,\delta}(a_{k})$, the sequence $\{a_k\}=\{a_k(\delta)\}$ will always refer to the sequence chosen in Lemma
\ref{ak}.

If $u$ is harmonic on a domain $\Omega \subset \mathbb{R}^{n}$, then $|u|^{p}$ is subharmonic on $\Omega$ when $1\leq p<\infty$. This is no longer true when $0<p<1$, nevertheless it is shown in  \cite[Lemma 2]{Fefferman} and \cite{Kuran} that $|u|^{p}$ has subharmonic behaviour in the following sense:
There exists a constant $K \, (\geq1)$ depending only on $n$ and $p$ such that
\begin{equation}\label{equsubharmonic}
 |u(x)|^{p}\leq \frac{K}{r^{n}}\int_{B(x,r)} |u(y)|^{p} d\nu(y),
\end{equation}
whenever $B(x,r)=\{y:|y-x|<r\}\subset \Omega$. In particular, if $p\geq 1$  then the constant $K=1$. Using this result and $E_{\delta}(x)$ is an Euclidean ball one can get the
following useful inequality easily: given $0<p<\infty$, $\alpha>-1$ and $0<\delta<1$, we have
\begin{equation}\label{equsubharmonic2}
 |u(x)|^{p}\lesssim  \frac{1}{(1-|x|^2)^{n+\alpha}}\int_{E_{\delta}(x)} |u(y)|^{p} (1-|y|^2)^{\alpha}d\nu(y)
\end{equation}
for all $u\in b^p_\alpha$ and $x\in \mathbb{B}$.  The  estimate (\ref{equsubharmonic}) also leads to the following pointwise estimate. See
\cite[Lemma 3.1]{DOG} for a proof.
\begin{lemma}\label{growth}
Let $0<p<\infty$ and $\alpha>-1$. Then
\[
|u(x)| \lesssim \frac{\|u\|_{b^p_\alpha}}{(1-|x|^2)^{(n+\alpha)/p}}
\]
for all $u\in b^p_\alpha$ and $x\in \mathbb{B}$.
\end{lemma}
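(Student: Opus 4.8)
The plan is to read the estimate off directly from the sub-mean-value inequality \eqref{equsubharmonic2} recorded just before the statement, which already packages all the genuine content. Fix $0<\delta<1$. Because $\alpha>-1$, the choice $t=0$ is admissible in Definition \ref{definition of the h B-B space} (condition \eqref{alpha+pt} reads $\alpha>-1$), so that $D^0_s$ is the identity; hence, by the $(s,t)$-independence of the quasinorm, $\|u\|_{b^p_\alpha}$ is comparable to the plain weighted Lebesgue quasinorm,
\[
\|u\|^p_{b^p_\alpha}\sim\int_{\mathbb{B}}|u(y)|^p(1-|y|^2)^\alpha\,d\nu(y).
\]

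First I would apply \eqref{equsubharmonic2}, which holds for every $u\in b^p_\alpha$ and every $x\in\mathbb{B}$, to obtain
\[
|u(x)|^p\lesssim\frac{1}{(1-|x|^2)^{n+\alpha}}\int_{E_\delta(x)}|u(y)|^p(1-|y|^2)^\alpha\,d\nu(y).
\]
Next, since the integrand is nonnegative and $E_\delta(x)\subset\mathbb{B}$, I would enlarge the region of integration to all of $\mathbb{B}$ and recognize the result as a multiple of $\|u\|^p_{b^p_\alpha}$. This gives $|u(x)|^p\lesssim\|u\|^p_{b^p_\alpha}/(1-|x|^2)^{n+\alpha}$, and taking $p$-th roots produces the asserted bound with exponent $(n+\alpha)/p$; the implicit constant depends only on $n,p,\alpha$ (and the fixed $\delta$), in accordance with the $\lesssim$ convention.

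There is essentially no obstacle once \eqref{equsubharmonic2} is granted: the domain-enlargement step is elementary. The only genuinely nontrivial ingredient is the subharmonic behaviour of $|u|^p$, and for the delicate range $0<p<1$ this is exactly the Fefferman--Stein/Kuran estimate \eqref{equsubharmonic} of \cite[Lemma 2]{Fefferman} and \cite{Kuran}, whose constant $K\geq 1$ depends on $n$ and $p$; it passes to \eqref{equsubharmonic2} because $E_\delta(x)$ is a Euclidean ball of radius comparable to $1-|x|^2$ and $(1-|y|^2)^\alpha\sim(1-|x|^2)^\alpha$ on $E_\delta(x)$ by Lemma \ref{x-y-close}. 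Thus, were one to prove the lemma from scratch, establishing \eqref{equsubharmonic} for $p<1$ would be the single step requiring real work, and everything downstream is the routine argument above.
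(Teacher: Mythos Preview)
Your proof is correct and follows exactly the approach the paper indicates: the paper does not write out a proof but says the lemma follows from the subharmonic estimate \eqref{equsubharmonic} and cites \cite[Lemma 3.1]{DOG}, and your argument via \eqref{equsubharmonic2} together with the trivial enlargement $E_\delta(x)\subset\mathbb{B}$ is precisely that route. There is nothing to add.
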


Lastly we  obtain a generalized subharmonicity property with respect to  the measure $\nu_{\alpha}$ on $\mathbb{B}$.
The proofs for $\alpha=0$ given in \cite[Lemma 10]{M2} for the ball  and \cite[Lemma 3.2]{CLN1} for bounded smooth domains work equally well for other $\alpha$ too. A final use of Jensen inequality  extends the result to $p>1$.

\begin{lemma}\label{subharmonic-measure}
Suppose  $1\leq p<\infty$, $\alpha\in \mathbb{R}$, $0<\delta<1$ and $\mu$ is a positive measure on $\mathbb{B}$, Then there exists a constant $C$ (depending only on $\delta$) such that

\begin{equation*}
 \mu(E_{\delta}(x))^{p}\leq \frac{C}{v_{\alpha}(E_{\delta}(x))}\int_{E_{\delta}(x)} \mu(E_{\delta}(y))^{p} d\nu_{\alpha}(y)
\end{equation*}
for all $x$ in $\mathbb{B}$.
\end{lemma}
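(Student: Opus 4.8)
The plan is to establish the case $p=1$ first — where the statement is purely about the measure $\mu$ and the geometry of pseudohyperbolic balls — and then bootstrap to $1<p<\infty$ with a single application of Jensen's inequality. For $p=1$ the inequality to be proved is
\[
\mu\bigl(E_{\delta}(x)\bigr)\ \le\ \frac{C}{\nu_{\alpha}\bigl(E_{\delta}(x)\bigr)}\int_{E_{\delta}(x)}\mu\bigl(E_{\delta}(y)\bigr)\,d\nu_{\alpha}(y),\qquad x\in\mathbb{B}.
\]
I would attack the right-hand side directly with Tonelli's theorem: writing $\mu(E_{\delta}(y))=\int_{\mathbb{B}}\mathbf 1_{\{\rho(y,q)<\delta\}}\,d\mu(q)$ and interchanging the order of integration gives
\[
\int_{E_{\delta}(x)}\mu\bigl(E_{\delta}(y)\bigr)\,d\nu_{\alpha}(y)=\int_{\mathbb{B}}\nu_{\alpha}\bigl(E_{\delta}(x)\cap E_{\delta}(q)\bigr)\,d\mu(q)\ \ge\ \int_{E_{\delta}(x)}\nu_{\alpha}\bigl(E_{\delta}(x)\cap E_{\delta}(q)\bigr)\,d\mu(q).
\]
Thus the matter reduces to the geometric estimate
\[
\nu_{\alpha}\bigl(E_{\delta}(x)\cap E_{\delta}(q)\bigr)\ \gtrsim\ \nu_{\alpha}\bigl(E_{\delta}(x)\bigr)\qquad\text{whenever }\rho(x,q)<\delta,
\]
with a constant depending only on $\delta$ (and $n,\alpha$); integrating this in $q$ over $E_{\delta}(x)$ against $d\mu$ then produces exactly $\mu(E_{\delta}(x))\lesssim\nu_{\alpha}(E_{\delta}(x))^{-1}\int_{E_{\delta}(x)}\mu(E_{\delta}(y))\,d\nu_{\alpha}(y)$. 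Alternatively one may simply quote \cite[Lemma 10]{M2} and \cite[Lemma 3.2]{CLN1}, which treat $p=1$, $\alpha=0$ on the ball and on smooth domains; their argument uses the value of $\alpha$ only through the comparisons $\nu_{\alpha}(E_{\delta}(y))\sim(1-|y|^{2})^{\alpha+n}$ and $1-|x|^{2}\sim1-|y|^{2}$ on a fixed pseudohyperbolic ball, i.e.\ through (\ref{hyper-volume}) and Lemma \ref{x-y-close}, both of which hold for every real $\alpha$ with center-independent constants, so the argument transfers without change.

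The main obstacle is the uniform overlap estimate above, and that is where the work lies. I would deduce it from M\"obius invariance together with a compactness argument: since $\rho$ is $\varphi_{x}$-invariant and $\varphi_{x}(x)=0$, one has $\varphi_{x}\bigl(E_{\delta}(x)\cap E_{\delta}(q)\bigr)=E_{\delta}(0)\cap E_{\delta}(\varphi_{x}(q))$ with $\rho(0,\varphi_{x}(q))=\rho(x,q)<\delta$. Passing to the M\"obius-invariant measure $d\tau(y)=(1-|y|^{2})^{-n}\,d\nu(y)$ reduces the ratio $\tau(E_{\delta}(x)\cap E_{\delta}(q))/\tau(E_{\delta}(x))$ to a continuous, strictly positive function of the single variable $\varphi_{x}(q)\in\overline{E_{\delta}(0)}$ — positivity persisting up to $\rho=\delta$ because two open pseudohyperbolic balls of equal radius $\delta$ with centers at distance $\le\delta<2\delta$ overlap in a nonempty open set — hence bounded below by some $c(\delta)>0$. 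Finally, on $E_{\delta}(x)$ one has $d\nu_{\alpha}(y)\sim(1-|x|^{2})^{\alpha+n}\,d\tau(y)$ by Lemma \ref{x-y-close}, which converts the $\tau$-bound into $\nu_{\alpha}(E_{\delta}(x)\cap E_{\delta}(q))\gtrsim\nu_{\alpha}(E_{\delta}(x))$. (One could instead exhibit an explicit sub-ball $E_{\delta'}(m)\subseteq E_{\delta}(x)\cap E_{\delta}(q)$ centered at the hyperbolic midpoint $m$ of $x$ and $q$, with $\delta'=\delta'(\delta)>0$, and conclude via (\ref{hyper-volume}) and Lemma \ref{x-y-close}; this avoids the invariant measure at the cost of a short hyperbolic-trigonometry computation.)

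Once $p=1$ is in hand, the passage to $1<p<\infty$ is routine. Since $\nu_{\alpha}(E_{\delta}(x))^{-1}\,d\nu_{\alpha}$ restricted to $E_{\delta}(x)$ is a probability measure and $t\mapsto t^{p}$ is convex, Jensen's inequality yields
\[
\left(\frac{1}{\nu_{\alpha}(E_{\delta}(x))}\int_{E_{\delta}(x)}\mu\bigl(E_{\delta}(y)\bigr)\,d\nu_{\alpha}(y)\right)^{\!p}\ \le\ \frac{1}{\nu_{\alpha}(E_{\delta}(x))}\int_{E_{\delta}(x)}\mu\bigl(E_{\delta}(y)\bigr)^{p}\,d\nu_{\alpha}(y),
\]
so raising the $p=1$ estimate to the $p$-th power and combining gives
\[
\mu\bigl(E_{\delta}(x)\bigr)^{p}\ \le\ \frac{C^{p}}{\nu_{\alpha}(E_{\delta}(x))}\int_{E_{\delta}(x)}\mu\bigl(E_{\delta}(y)\bigr)^{p}\,d\nu_{\alpha}(y),
\]
which is the claim, with a constant still depending only on $\delta$. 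In short, the only genuine difficulty is the center-uniform overlap estimate for pseudohyperbolic balls; everything else is Tonelli's theorem plus convexity.
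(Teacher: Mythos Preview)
Your proposal is correct and follows essentially the same strategy as the paper: establish the $p=1$ case (the paper simply cites \cite[Lemma 10]{M2} and \cite[Lemma 3.2]{CLN1} and notes that those arguments carry over to all $\alpha$, while you spell out the Tonelli/overlap argument that underlies them), and then apply Jensen's inequality to pass to $p>1$. Your write-up is considerably more detailed than the paper's, which gives no proof beyond the two citations and the phrase ``a final use of Jensen inequality extends the result to $p>1$''.
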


\subsection{Reproducing Kernels and the Operators $D^t_s$}
For every $\alpha\in \mathbb{R}$ we have $\gamma_{0} (\alpha)=1$, and therefore
\begin{equation}\label{Rq(x,0)}
R_\alpha(x,0)=R_\alpha(0,y)=1, \quad (x,y\in \mathbb{B}, \alpha\in \mathbb{R}).
\end{equation}
Checking the two cases in (\ref{gamma k q-Definition}), we have by (\ref{Stirling})
\begin{equation}\label{gamma-k-asymptotic}
\gamma_k(\alpha) \sim k^{1+\alpha} \quad (k\to \infty).
\end{equation}
$R_\alpha(x,y)$ is harmonic as a function of either of its variables on $\overline{\mathbb{B}}$.
Using the coefficients in the extended kernels we define the radial differential operators $D^t_s$.

\begin{definition}
Let $f=\sum_{k=0}^\infty f_k\in h(\mathbb{B})$ be given by its homogeneous expansion. For $s,t\in\mathbb{R}$ we define  $D_s^t$ on $ h(\mathbb{B}) $  by

\begin{equation*}
  D_s^t f := \sum_{k=0}^\infty \frac{\gamma_k(s+t)}{\gamma_k(s)} \, f_k.
\end{equation*}

\end{definition}
By (\ref{gamma-k-asymptotic}), $\gamma_k(s+t)/\gamma_k(s) \sim k^t$ for any $s,t$ and, roughly speaking, $D_s^t$ multiplies the $k$th homogeneous part of $f$ by $k^{t}$. For every $s\in \mathbb{R}$, $D_s^0=I$, the identity. An important property of $D^t_s$ is that it is invertible with two-sided inverse $D_{s+t}^{-t}$:

\begin{equation}\label{*}
D^{-t}_{s+t} D^t_s = D^t_s D^{-t}_{s+t} = I,
\end{equation}
which follows from the additive property $D_{s+t}^{z} D_s^t = D_s^{z+t}$.

For every $s,t \in \mathbb{R}$, the map $D^t_s: h(\mathbb{B})\to h(\mathbb{B})$ is continuous in the topology
of uniform convergence on compact subsets (see \cite[Theorem 3.2]{GKU2}). The parameter $s$ plays a minor role and is used to have the precise relation

\begin{equation}\label{**}
D_s^t R_s(x,y)=R_{s+t}(x,y)
\end{equation}

The most important property of the operator $D^t_s$ that we will use later is that it allows us to pass from one Bergman-Besov  space to another. More precisely, we have the following isomorphism.

\begin{lemma}\label{Apply-Dst}
Let $0<p<\infty$ and $\alpha,s,t\in \mathbb{R}$.
The map $D^t_s:b^p_\alpha \to b^p_{\alpha+pt}$ is an isomorphism.
\end{lemma}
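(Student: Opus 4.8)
The plan is to use the defining property of the spaces from Definition~\ref{definition of the h B-B space} together with the additive and inverse properties of the $D^t_s$ family. Fix a pair $(a,b)$ of real parameters with $\alpha + pa > -1$; such a pair exists, and by the remark following Definition~\ref{definition of the h B-B space} the quantity $\|f\|_{b^p_\alpha} = \|I^a_s f\|_{L^p_\alpha}$ computes the norm (quasinorm) on $b^p_\alpha$. The key observation is that applying $D^t_s$ only shifts the homogeneous‑expansion multipliers, so we can relate a norm on $b^p_{\alpha+pt}$ of $D^t_s f$ to a norm on $b^p_\alpha$ of $f$ by choosing the auxiliary differentiation parameters on the target side to ``absorb'' the $D^t_s$.

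First I would record the composition law. From the additive property $D^{z}_{s+t}D^t_s = D^{z+t}_s$ and its consequence \eqref{*}, for any real $r$ we have $D^{r}_{s+t}\,D^t_s = D^{r+t}_s$ as operators on $h(\mathbb{B})$. Hence, for $g = D^t_s f \in h(\mathbb{B})$,
\[
I^{r}_{s+t} g (x) = (1-|x|^2)^{r} D^{r}_{s+t} (D^t_s f)(x) = (1-|x|^2)^{r} D^{r+t}_s f(x) = I^{r+t}_s f(x).
\]
Now choose $r$ so that the pair $(s+t, r)$ is admissible for the target space $b^p_{\alpha+pt}$, i.e. so that $(\alpha+pt) + pr > -1$; equivalently $\alpha + p(r+t) > -1$, which is precisely the admissibility of the pair $(s, r+t)$ for $b^p_\alpha$. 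Such an $r$ plainly exists (take $r$ large). For that choice,
\[
\|D^t_s f\|_{b^p_{\alpha+pt}} = \|I^{r}_{s+t}(D^t_s f)\|_{L^p_{\alpha+pt}} = \|I^{r+t}_s f\|_{L^p_\alpha} \cdot c,
\]
where the constant $c$ comes only from the ratio $V_\alpha/V_{\alpha+pt}$ of normalizing constants in the definition of the $L^p$ norms (both finite and nonzero), and by the $s,t$‑independence of the norm on $b^p_\alpha$ the right‑hand side is comparable to $\|f\|_{b^p_\alpha}$. This shows $D^t_s$ maps $b^p_\alpha$ boundedly into $b^p_{\alpha+pt}$ with $\|D^t_s f\|_{b^p_{\alpha+pt}} \sim \|f\|_{b^p_\alpha}$, and in particular $f \in b^p_\alpha \Rightarrow D^t_s f \in b^p_{\alpha+pt}$.

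For surjectivity and the two‑sided inverse, I would apply the same argument to $D^{-t}_{s+t} : b^p_{\alpha+pt} \to b^p_{(\alpha+pt)+p(-t)} = b^p_\alpha$, which is legitimate since $D^{-t}_{s+t}$ is again a member of the family with base parameter $s+t$. By \eqref{*}, $D^{-t}_{s+t}$ and $D^t_s$ are mutually inverse on $h(\mathbb{B})$, hence they restrict to mutually inverse bounded maps between $b^p_\alpha$ and $b^p_{\alpha+pt}$; this gives the isomorphism (an isometry up to the explicit constant $c$ if one normalizes norms with matching $V$'s). The only genuinely delicate point is making sure the admissibility inequality $\alpha + p(r+t) > -1$ can be met simultaneously with the corresponding one on the target side — but since both inequalities are the \emph{same} inequality on $r$, and each is satisfied for all sufficiently large $r$, there is no obstruction; the rest is bookkeeping with the $s,t$‑independence of the (quasi)norm, which is quoted as known. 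One should note for $0<p<1$ that ``isomorphism'' is meant in the topological sense for quasi‑Banach spaces, but the estimates above are unaffected.
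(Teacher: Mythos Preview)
The paper does not actually prove this lemma in-text; it simply cites \cite[Corollary 9.2]{GKU2} for $1\le p<\infty$ and \cite{DOG} for $0<p<1$. Your direct argument from Definition~\ref{definition of the h B-B space} and the additive property of the $D^t_s$ is the natural one and is essentially correct, but the displayed pointwise identity
\[
I^{r}_{s+t}(D^t_s f)(x) \;=\; I^{r+t}_s f(x)
\]
is false as written: by definition $I^{r+t}_s f(x) = (1-|x|^2)^{r+t} D^{r+t}_s f(x)$, whereas your middle expression is $(1-|x|^2)^{r} D^{r+t}_s f(x)$, so the two sides differ by a factor $(1-|x|^2)^t$. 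The correct pointwise relation is $I^{r}_{s+t}(D^t_s f)(x) = (1-|x|^2)^{-t}\, I^{r+t}_s f(x)$.

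Fortunately your norm identity $\|I^{r}_{s+t}(D^t_s f)\|_{L^p_{\alpha+pt}} = c\,\|I^{r+t}_s f\|_{L^p_{\alpha}}$ is nonetheless correct, precisely because the extra weight $(1-|x|^2)^{pt}$ carried by $d\nu_{\alpha+pt}$ exactly compensates for the missing factor $(1-|x|^2)^{pt}$ in $|I^{r}_{s+t}(D^t_s f)|^p$ relative to $|I^{r+t}_s f|^p$; both $L^p$ norms unwind to $\int_{\mathbb{B}} |D^{r+t}_s f(x)|^p (1-|x|^2)^{\alpha+p(r+t)}\,d\nu(x)$ up to the normalizing constants $V_\alpha$, $V_{\alpha+pt}$. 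With that correction your argument (including surjectivity via $D^{-t}_{s+t}$ and \eqref{*}) goes through verbatim.
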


For a proof of the above lemma see \cite[Corollary 9.2]{GKU2} when $1\leq p<\infty$ and \cite{DOG} when $0<p<1$. We also need the following duality result.

\begin{theorem}\label{Theorem-Dual-of-bpalpha}
Suppose  $\alpha>-1$, $\beta>-1$ and  $1<p<\infty$. Then
the dual of $b^p_\alpha$ can be identified with $b^{p'}_\beta$  under the pairing

\begin{equation}\label{Dual-Pairing}
[f,g ]_{b^{2}_{\gamma}} = \int_{\mathbb{B}} f(x) \, \overline{g(x)} \ d\nu_{\gamma}(x) \qquad (f\in b^p_\alpha, \ g\in b^{p'}_\beta),
\end{equation}
where $1/p+1/p'=1$, $\gamma=\alpha/p+\beta/p'$.
\end{theorem}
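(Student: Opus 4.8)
The plan is to reduce the statement to the already-known case $\alpha=\beta=\gamma$ by transporting everything through the isomorphisms $D^t_s$ of Lemma~\ref{Apply-Dst}. First I would recall (or invoke from \cite{GKU2}) the baseline duality: for a \emph{single} weight $\gamma>-1$ and $1<p<\infty$, the dual of $b^p_\gamma$ is $b^{p'}_\gamma$ under the integral pairing $[f,g]_{b^2_\gamma}=\int_{\mathbb B} f\overline g\, d\nu_\gamma$. This is the harmonic analogue of the classical weighted Bergman duality and follows from the boundedness of the projection $Q_\gamma\colon L^p_\gamma\to b^p_\gamma$ together with the self-duality of $L^p_\gamma$ with $L^{p'}_\gamma$: a bounded functional on $b^p_\gamma$ extends (Hahn--Banach) to $L^p_\gamma$, is represented by some $h\in L^{p'}_\gamma$, and $g:=Q_\gamma h\in b^{p'}_\gamma$ represents the same functional on $b^p_\gamma$ because $Q_\gamma$ fixes $b^p_\gamma$ and is (formally) self-adjoint with respect to $[\cdot,\cdot]_{b^2_\gamma}$; uniqueness of $g$ and the norm equivalence $\|g\|_{b^{p'}_\gamma}\sim\|\Lambda\|$ come from the reverse estimate, i.e.\ testing $\Lambda$ against kernel functions $R_\gamma(\cdot,y)$ (or, cleanly, from the open mapping theorem applied to $Q_\gamma$).

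Next I would pass to general $\alpha,\beta$. Choose $t_1,t_2\in\mathbb R$ with $\alpha+p t_1=\gamma$ and $\beta+p' t_2=\gamma$; such $t_i$ exist since $\gamma>-1$, and note $t_1+t_2 = (\gamma-\alpha)/p+(\gamma-\beta)/p'$, which one checks equals $0$ precisely under the stated relation $\gamma=\alpha/p+\beta/p'$, so $t_2=-t_1$. Write $t:=t_1$. By Lemma~\ref{Apply-Dst}, $D^t_\alpha\colon b^p_\alpha\to b^p_\gamma$ and $D^{-t}_{\beta+p'(-t)}=D^{-t}_{\gamma}\colon b^{p'}_\gamma\to b^{p'}_\beta$ are isomorphisms; equivalently $D^t_\beta$... more carefully, use that $D^{-t}_{\gamma}$ maps $b^{p'}_\gamma$ isomorphically onto $b^{p'}_{\gamma-p't}=b^{p'}_\beta$. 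Given $\Lambda\in(b^p_\alpha)^*$, the functional $f\mapsto \Lambda(D^{-t}_{\gamma}f)$ lies in $(b^p_\gamma)^*$, hence is represented by a unique $\tilde g\in b^{p'}_\gamma$ via $[\cdot,\tilde g]_{b^2_\gamma}$; set $g:=D^{-t}_{\gamma}\tilde g\in b^{p'}_\beta$. The remaining task is to show the pairing unwinds correctly, i.e.\ $\Lambda(f)=[f,g]_{b^2_\gamma}$ for all $f\in b^p_\alpha$, with equivalent norms.

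The key identity needed — and I expect this to be the main technical point — is a ``transfer'' or integration-by-parts formula for $D^t_s$ under the inner products $[\cdot,\cdot]_{b^2_\gamma}$: namely that $[D^t_\alpha f,\, h]_{b^2_\gamma} = [f,\, D^{t}_{?}h]_{b^2_{?}}$ up to the correct re-indexing of weights, which in turn rests on the kernel relation \eqref{**}, $D^t_s R_s(x,y)=R_{s+t}(x,y)$, and on $R_\gamma$ being the reproducing kernel for $b^2_\gamma$. Concretely, for harmonic polynomials $f,h$ one computes $[D^t_\alpha f, h]_{b^2_\gamma}$ by expanding in homogeneous parts and using $\gamma_k(\gamma)$-orthogonality, and matches it to $[f,h]$ against a suitably differentiated kernel; density of harmonic polynomials in every $b^p_\gamma$ (quoted earlier) then promotes the identity to all of $b^p_\alpha\times b^{p'}_\gamma$ by the continuity furnished by Hölder's inequality and Lemma~\ref{Apply-Dst}. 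Once this adjoint-type relation is in place, substituting $\tilde g$ and $g=D^{-t}_\gamma\tilde g$ gives $\Lambda(f)=[D^t_\alpha f,\tilde g]_{b^2_\gamma}=[f,g]_{b^2_\gamma}$; uniqueness of $g$ follows from that of $\tilde g$ plus injectivity of $D^{-t}_\gamma$, and the two-sided norm estimate $\|g\|_{b^{p'}_\beta}\sim\|D^{-t}_\gamma\|\,\|\tilde g\|_{b^{p'}_\gamma}\sim\|\Lambda\|$ follows from the isomorphism constants. The only real care is bookkeeping the weight shifts so that $t_1+t_2=0$ exactly matches $\gamma=\alpha/p+\beta/p'$; everything else is formal once the baseline $\alpha=\beta$ case and the kernel identity \eqref{**} are granted.
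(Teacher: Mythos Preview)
Your plan is viable but takes a genuinely different route from the paper. The paper argues directly with the $\gamma$-pairing and the projection $Q_\gamma$: given a bounded functional $F$ on $b^p_\alpha$, extend by Hahn--Banach to $L^p_\alpha$, represent it by some $h\in L^{p'}_\alpha$, rewrite $F(f)=\int_{\mathbb B} f\,\overline H\,d\nu_\gamma$ with $H(x)=\tfrac{V_\gamma}{V_\alpha}(1-|x|^2)^{(\alpha-\beta)/p'}h(x)\in L^{p'}_\beta$, and set $g=Q_\gamma H$. The hypotheses $\alpha>-1$ and $\beta>-1$ are exactly equivalent to $p(\gamma+1)>\alpha+1$ and $p'(\gamma+1)>\beta+1$, so $Q_\gamma$ is bounded from $L^p_\alpha$ onto $b^p_\alpha$ and from $L^{p'}_\beta$ onto $b^{p'}_\beta$; hence $g\in b^{p'}_\beta$ and $[f,H]_{b^2_\gamma}=[Q_\gamma f,H]_{b^2_\gamma}=[f,Q_\gamma H]_{b^2_\gamma}$. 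No differential operators enter; the weight shift is done multiplicatively by a power of $1-|x|^2$, and the baseline case $\alpha=\beta$ is not invoked separately.

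Your reduction through $D^t_s$ can be made to work, but the bookkeeping you flag is indeed off as written. With $t=t_1=(\gamma-\alpha)/p$ and $t_2=-t$, one has $\beta=\gamma+p't$, not $\gamma-p't$; so $D^{-t}_\gamma$ sends $b^{p'}_\gamma$ to $b^{p'}_{\gamma-p't}\ne b^{p'}_\beta$. The inverse of $D^{-t}_\gamma$ is $D^{t}_{\gamma-t}$, not $D^t_\alpha$, by \eqref{*}. The transfer identity is also simpler than you anticipate: since the multipliers $\gamma_k(s+t)/\gamma_k(s)$ are real and homogeneous parts are $\nu_\gamma$-orthogonal, every $D^t_s$ is formally self-adjoint under $[\cdot,\cdot]_{b^2_\gamma}$ with \emph{no} change of weight, so $[D^t_{\gamma-t}f,\tilde g]_{b^2_\gamma}=[f,D^t_{\gamma-t}\tilde g]_{b^2_\gamma}$ and then $g:=D^t_{\gamma-t}\tilde g\in b^{p'}_{\gamma+p't}=b^{p'}_\beta$ does the job. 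With these corrections your argument goes through; the paper's route is shorter and keeps everything at the $L^p$ level.
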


\begin{proof}
 If $g\in b^{p'}_\beta$,
 \begin{equation*}
F(f)=[f,g ]_{b^{2}_{\gamma}} =\frac{1}{V_{\gamma}} \int_{\mathbb{B}} (1-|x|^{2})^{\alpha/p}f(x) \, \overline{(1-|x|^{2})^{\beta/p'}g(x)} \ d\nu(x) \quad (f\in b^p_\alpha).
\end{equation*}
 It follows from H\"{o}lder's inequality that $F$ is a bounded linear functional on  $b^p_\alpha$ with  $\|F\|\lesssim \|g\|_{b^{p'}_{\beta}}$ .

Conversely, if $F$ is a bounded linear functional on  $b^p_\alpha$, then according to the Hahn-Banach extension theorem, $F$ can be extended (without increasing its norm) to a bounded linear functional on $L^p_\alpha$. By the usual duality of $L^p$ spaces, there exists some $h \in L^{p'}_{\alpha}$ such that

\begin{equation*}
F(f)=\int_{\mathbb{B}} f(x) \, \overline{h(x)} \ d\nu_{\alpha}(x), \qquad (f\in L^p_\alpha).
\end{equation*}
Let $H(x)=\frac{V_{\gamma}}{V_{\alpha}}(1-|x|^{2})^{(\alpha-\beta)/p'}h(x)$, $x\in \mathbb{B}$. Then  $H \in L^{p'}_{\beta}$ and
\begin{equation*}
F(f)=\int_{\mathbb{B}} f(x) \, \overline{H(x)} \ d\nu_{\gamma}(x) \qquad (f\in b^p_\alpha).
\end{equation*}
It is easy to check that condition $\alpha>-1$ is equivalent to $p'(\gamma+1)>\beta+1$, and the condition $\beta>-1$ is equivalent to $p(\gamma+1)>\alpha+1$. So, by  \cite[Theorem 1.4]{GKU2}, $Q_{\gamma}$ is a bounded projection from $L^p_\alpha$ onto $b^p_\alpha$, and $Q_{\gamma}$ is also a bounded projection from $L^{p'}_{\beta}$ onto $b^{p'}_{\beta}$. Let $g=Q_{\gamma}(H)$. Then $g \in b^{p'}_{\beta}$ and

\begin{equation*}
F(f)=[f,H ]_{b^{2}_{\gamma}}=[f,Q_{\gamma}(H )]_{b^{2}_{\gamma}}=[f,g ]_{b^{2}_{\gamma}}
\end{equation*}
for all $f\in b^p_\alpha$. The proof is now complete.
\end{proof}
For a special case of the preceding theorem when $\alpha=\beta$, see \cite[Corollary 5.1]{LSR}, \cite[Theorem 3.2]{JP} and also \cite[Theorem 13.4]{GKU2} for $\alpha \in \mathbb{R}$ without restriction. In this case, we clearly have $ \gamma=\alpha$ as well.
\subsection{Estimates of Reproducing Kernels}

In case $\alpha>-1$, the reproducing kernels $R_\alpha(x,y)$ are well-studied by various authors. We recall some of their properties below. For extension of these properties to $\alpha\in\mathbb{R}$ we refer to \cite{GKU2}.

For a proof of the following pointwise estimate see \cite{CR,R} when $\alpha>-1$ and \cite{GKU2} when $\alpha\in\mathbb{R}$.

\begin{lemma}\label{R-alpha}
Let $\alpha\in\mathbb{R}$. For all $x,y\in\mathbb{B}$,
\[
|R_\alpha(x,y)| \lesssim
\begin{cases}
\dfrac{1}{[x,y]^{\alpha+n}},&\text{if $\, \alpha>-n$};\\
1+\log \dfrac{1}{[x,y]},&\text{if $\, \alpha=-n$};\\
1,&\text{if $\, \alpha<-n$}.
\end{cases}
\]
\end{lemma}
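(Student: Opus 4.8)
The plan is to reduce the general case $\alpha \in \mathbb{R}$ to an estimate on the zonal‐harmonic series \eqref{Rq - Series expansion}, using the growth rate \eqref{gamma-k-asymptotic} of the coefficients $\gamma_k(\alpha)$ together with the standard bound $|Z_k(x,y)| \lesssim k^{n-2}$ valid for all $x,y \in \overline{\mathbb{B}}$ (see \cite[Chapter 5]{ABR}). Writing $r = |x||y|$ and $\xi = x/|x|$, $\eta = y/|y|$, the zonal harmonic satisfies $Z_k(x,y) = r^k Z_k(\xi,\eta)$ with $Z_k(\xi,\eta)$ a Gegenbauer polynomial evaluated at $\xi\cdot\eta \in [-1,1]$; the crude uniform bound $|Z_k(\xi,\eta)| \lesssim k^{n-2}$ gives
\[
|R_\alpha(x,y)| \lesssim \sum_{k=0}^{\infty} \gamma_k(\alpha)\, |Z_k(x,y)| \lesssim \sum_{k=0}^{\infty} k^{1+\alpha}\, k^{n-2}\, r^k = \sum_{k=0}^{\infty} k^{n+\alpha-1}\, r^k.
\]
For $\alpha < -n$ the exponent $n+\alpha-1 < -1$, so the series converges uniformly in $r \le 1$, giving the bound $|R_\alpha(x,y)|\lesssim 1$. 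For $\alpha = -n$ the series is $\sum k^{-1} r^k \sim 1 + \log\frac{1}{1-r}$, and for $\alpha > -n$ it is $\sum k^{n+\alpha-1} r^k \sim (1-r)^{-(n+\alpha)}$; these are the classical asymptotics of such power series as $r \to 1^-$.

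The remaining issue is that the bounds above are in terms of $1-r = 1-|x||y|$, whereas the lemma is stated in terms of $[x,y] = \sqrt{1 - 2x\cdot y + |x|^2|y|^2}$. The reconciliation is the elementary two‐sided comparison $1 - |x||y| \lesssim [x,y]$ together with $[x,y]^2 = (1-|x||y|)^2 + 2|x||y|(1 - \xi\cdot\eta) \ge (1-|x||y|)^2$, so $[x,y] \ge 1 - |x||y|$ always, and conversely $[x,y]^2 \le (1-|x||y|)^2 + 2(1-\xi\cdot\eta) \lesssim$ nothing useful directly — so the crude zonal bound is in fact \emph{not} strong enough to recover the sharp dependence on $[x,y]$ when $\xi\cdot\eta$ is far from $1$. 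Hence the honest approach is different: one should instead invoke the known closed‐form or integral representation of $R_\alpha(x,y)$. For $\alpha > -1$ the kernel has an explicit expression (a finite combination of terms of the form $[x,y]^{-(n+\alpha)}$ times bounded factors, cf. \cite{CR,R}), which yields the bound directly; for general $\alpha \in \mathbb{R}$ one uses the relation \eqref{**}, namely $R_\alpha = D_{\beta}^{\alpha-\beta} R_\beta$ for a suitable $\beta > -1$, and the fact (from \cite{GKU2}) that applying $D_s^t$ to a kernel shifts the exponent in the $[x,y]$‐estimate by $t$. Concretely, pick $\beta > \max(\alpha, -1)$; then $R_\beta(x,y) \lesssim [x,y]^{-(n+\beta)}$, and differentiating $t = \alpha - \beta < 0$ times improves the decay: the radial derivative estimates established in \cite{GKU2} show $|D_\beta^{t} R_\beta(x,y)| \lesssim [x,y]^{-(n+\beta+t)} = [x,y]^{-(n+\alpha)}$ in the range $\alpha > -n$, with the logarithmic borderline at $\alpha = -n$ and boundedness below it.

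The main obstacle, then, is step three: transferring the pointwise kernel estimate across the operator $D_s^t$ and tracking exactly how the exponent and the logarithmic borderline behave. I would handle this by citing the corresponding kernel estimates in \cite{GKU2} (which are proved there precisely for this purpose), so that the proof here becomes: (a) record the zonal expansion and the coefficient asymptotics \eqref{gamma-k-asymptotic}; (b) for $\alpha > -1$ quote \cite{CR,R}; (c) for $\alpha \le -1$ reduce to a large‐parameter kernel via \eqref{**} and quote the differentiation estimate from \cite{GKU2}; (d) verify the three cases $\alpha > -n$, $\alpha = -n$, $\alpha < -n$ correspond to the three regimes of the exponent $n+\alpha$. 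The genuinely new content is minimal; the lemma is essentially a bookkeeping consequence of results in the cited references, which is why the excerpt simply says "for a proof see \cite{CR,R} when $\alpha>-1$ and \cite{GKU2} when $\alpha\in\mathbb{R}$."
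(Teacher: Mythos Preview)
Your proposal is correct and arrives at essentially the same conclusion as the paper: the lemma is not proved in the paper at all but is quoted from \cite{CR,R} for $\alpha>-1$ and from \cite{GKU2} for general $\alpha\in\mathbb{R}$. Your exploratory first attempt via the crude zonal bound $|Z_k|\lesssim k^{n-2}r^k$ is instructive, and you are right to abandon it: it yields only control by $(1-|x||y|)^{-(n+\alpha)}$, which is weaker than the claimed $[x,y]^{-(n+\alpha)}$ since $1-|x||y|\le [x,y]$ and the angular information is lost. Your fallback---closed-form estimates for $\alpha>-1$ and transfer via $R_\alpha=D_\beta^{\alpha-\beta}R_\beta$ together with the kernel differentiation estimates of \cite{GKU2}---is exactly the route taken in the cited references, so there is no discrepancy with the paper.
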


The next lemma shows that the first part of the above estimate continues to hold when $x$ and $y$ are close enough in the pseudohyperbolic metric. It can be proved along the same lines as \cite[Proposition 5]{M}.

\begin{lemma}\label{Kernel-two-sided}
Let $\alpha>-n$. There exists $0<\delta<1$ such that for every $x\in \mathbb{B}$ and $y\in E_\delta(x)$,
\[
R_\alpha (x,y) \sim \frac{1}{(1-|x|^2)^{\alpha+n}}.
\]
\end{lemma}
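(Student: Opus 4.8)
The plan is to combine the upper estimate from Lemma \ref{R-alpha} with a matching lower bound obtained from the homogeneous expansion \eqref{Rq - Series expansion}. First I would fix attention on the case $-n<\alpha$ and note that by Lemma \ref{x-y-close}, for $y\in E_\delta(x)$ we have $[x,y]\sim 1-|x|^2$ with constants depending only on $\delta$; hence the upper estimate of Lemma \ref{R-alpha} already gives $R_\alpha(x,y)\lesssim (1-|x|^2)^{-(\alpha+n)}$ for \emph{every} $0<\delta<1$. So the whole content of the lemma is the lower bound $R_\alpha(x,y)\gtrsim (1-|x|^2)^{-(\alpha+n)}$ for a suitably small $\delta$.

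For the lower bound I would follow the strategy of \cite[Proposition 5]{M}. The idea is that $R_\alpha(x,\cdot)$ is harmonic and, when $y$ is close to $x$ in the pseudohyperbolic metric (equivalently, $y$ lies in a Euclidean ball of radius comparable to $1-|x|^2$ about a point comparable to $x$), the kernel does not have time to oscillate much away from its ``diagonal'' value $R_\alpha(x,x)$. Concretely: by \eqref{gamma-k-asymptotic} we have $\gamma_k(\alpha)\sim k^{1+\alpha}$, and the standard estimate $Z_k(x,x)\sim k^{n-1}(1-|x|^2)^{-\text{(something)}}$---more precisely the known growth of zonal harmonics on the diagonal---yields, after summing the series, $R_\alpha(x,x)\sim (1-|x|^2)^{-(\alpha+n)}$; this diagonal asymptotic is part of the kernel estimates in \cite{GKU2} that we are entitled to quote. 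It then remains to control the difference $|R_\alpha(x,y)-R_\alpha(x,x)|$ for $y\in E_\delta(x)$ and show it is at most, say, $\tfrac12 R_\alpha(x,x)$ once $\delta$ is small enough. To do this one writes $R_\alpha(x,y)-R_\alpha(x,x)=\int$ of a directional derivative of $R_\alpha(x,\cdot)$ along the segment from $x$ to $y$, estimates that gradient by a Cauchy-type inequality on a slightly larger pseudohyperbolic ball, and uses the already-available upper bound on $R_\alpha$ together with $|x-y|\lesssim \delta(1-|x|^2)$ on $E_\delta(x)$; the factor $\delta$ then beats the loss of one power of $1-|x|^2$ coming from differentiation, giving $|R_\alpha(x,y)-R_\alpha(x,x)|\lesssim \delta\,(1-|x|^2)^{-(\alpha+n)}$. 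Choosing $\delta$ small makes this a small fraction of $R_\alpha(x,x)$, and the triangle inequality finishes the lower bound.

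The main obstacle, and the step requiring the most care, is the gradient (Cauchy-type) estimate for $R_\alpha(x,\cdot)$ on pseudohyperbolic balls: one must pass from the pointwise bound on $R_\alpha$ (valid on a ball $E_{\delta'}(x)$ with $\delta<\delta'<1$, where again $[x,y]\sim 1-|x|^2$) to a bound on its Euclidean gradient with the correct extra power $(1-|x|^2)^{-1}$, via the mean-value/Poisson-kernel representation of derivatives of harmonic functions on a Euclidean ball of radius $\sim 1-|x|^2$. One should also verify uniformity: all implied constants depend only on $n$, $\alpha$, and $\delta$, not on $x$. Finally I would remark that the cases $\alpha=-n$ and $\alpha<-n$ are not asserted by the lemma and need not be treated; the statement is only about $\alpha>-n$, which is exactly the regime where the diagonal value $R_\alpha(x,x)$ genuinely blows up like $(1-|x|^2)^{-(\alpha+n)}$ and the comparison is meaningful.
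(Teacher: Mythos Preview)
Your proposal is correct and matches the paper's approach: the paper does not give an independent proof but simply says the lemma ``can be proved along the same lines as \cite[Proposition 5]{M},'' and you explicitly invoke the same reference and supply a sound sketch of that argument (upper bound from Lemma~\ref{R-alpha} plus Lemma~\ref{x-y-close}; lower bound from the diagonal asymptotic $R_\alpha(x,x)\sim(1-|x|^2)^{-(\alpha+n)}$ together with a Cauchy-type gradient estimate yielding $|R_\alpha(x,y)-R_\alpha(x,x)|\lesssim\delta\,(1-|x|^2)^{-(\alpha+n)}$). The only cosmetic imprecision is the placeholder in your zonal-harmonic diagonal estimate, but you correctly note that the needed fact $R_\alpha(x,x)\sim(1-|x|^2)^{-(\alpha+n)}$ is available from \cite{GKU2}.
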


The next lemma gives an estimate of weighted $L^p$ norms of reproducing kernels. When $\alpha>-1$ and $c>0$, it is proved in \cite[Proposition 8]{M}. For a full proof see \cite[Theorem 1.5]{GKU2}.

\begin{lemma}\label{norm-kernel}
Let $\alpha\in \mathbb{R}$, $0<p<\infty$ and $\beta>-1$. Set $c=p(\alpha+n)-(\beta+n)$. Then
\[
\int_{\mathbb B}|R_\alpha(x,y)|^p\,(1-|y|^2)^\beta\,d\nu(y)
\sim\begin{cases}
\dfrac1{(1-|x|^2)^c},&\text{if $c>0$};\\
\noalign{\medskip}
1+\log\dfrac1{1-|x|^2},&\text{if $c=0$};\\
\noalign{\medskip}
1,&\text{if $c<0$}.
\end{cases}
\]
\end{lemma}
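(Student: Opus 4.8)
The plan is to reduce everything to the classical integral estimate over the ball and then invoke the pointwise kernel bound of Lemma \ref{R-alpha}. First I would separate into the three regimes for $\alpha$. In the principal case $\alpha>-n$, Lemma \ref{R-alpha} gives $|R_\alpha(x,y)|\lesssim [x,y]^{-(\alpha+n)}$, so the upper bound
\[
\int_{\mathbb B}|R_\alpha(x,y)|^p(1-|y|^2)^\beta\,d\nu(y)\lesssim\int_{\mathbb B}\frac{(1-|y|^2)^\beta}{[x,y]^{p(\alpha+n)}}\,d\nu(y)
\]
follows immediately, and the right-hand integral is exactly the standard Forelli–Rudin type estimate: with $a=p(\alpha+n)$ and exponent $\beta>-1$, it is comparable to $(1-|x|^2)^{-(a-n-\beta)}=(1-|x|^2)^{-c}$ when $c=a-(n+\beta)>0$, to $1+\log\frac1{1-|x|^2}$ when $c=0$, and to a bounded quantity when $c<0$. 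I would either cite this classical fact (e.g. from \cite{R} or the references for \cite{GKU2}) or sketch it via the zonal-harmonic expansion $[x,y]^{-(\alpha+n)}\sim\sum_k k^{\alpha+n-1}|x|^k|y|^k P_k(\cos\theta)$ together with term-by-term integration and \eqref{Stirling}. For the matching lower bound I would use Lemma \ref{Kernel-two-sided}: there is $\delta\in(0,1)$ with $R_\alpha(x,y)\sim(1-|x|^2)^{-(\alpha+n)}$ for $y\in E_\delta(x)$, hence
\[
\int_{\mathbb B}|R_\alpha(x,y)|^p(1-|y|^2)^\beta\,d\nu(y)\gtrsim\frac1{(1-|x|^2)^{p(\alpha+n)}}\int_{E_\delta(x)}(1-|y|^2)^\beta\,d\nu(y)\sim(1-|x|^2)^{-c},
\]
using \eqref{hyper-volume} (with parameter $\beta>-1$) to evaluate $\nu_\beta(E_\delta(x))\sim(1-|x|^2)^{\beta+n}$. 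This gives the $c>0$ case on the nose, and for $c\le 0$ the lower bound by a constant is trivial since the left side is $\gtrsim$ the integral over a fixed ball $E_\delta(0)$, say; in the borderline $c=0$ case the extra logarithm comes from summing the contributions of a Whitney-type decomposition of $\mathbb B$ into pseudohyperbolic balls $E_\delta(a_k)$ from Lemma \ref{ak}, each contributing a comparable amount, which produces the logarithmic divergence.

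The cases $\alpha=-n$ and $\alpha<-n$ are handled separately but are easier. When $\alpha<-n$ we have $|R_\alpha(x,y)|\lesssim 1$ by Lemma \ref{R-alpha}, so the integral is bounded above by $\int_{\mathbb B}(1-|y|^2)^\beta\,d\nu(y)<\infty$ (finite because $\beta>-1$), and bounded below by a positive constant via \eqref{Rq(x,0)} and continuity, or simply because $c=p(\alpha+n)-(\beta+n)<0$ here so only the constant bound is claimed; one should just check the bound is uniform in $x$, which it is. When $\alpha=-n$, Lemma \ref{R-alpha} gives $|R_\alpha(x,y)|\lesssim 1+\log\frac1{[x,y]}$; raising to the $p$th power and integrating against $(1-|y|^2)^\beta$ with $\beta>-1$ again yields a finite quantity uniformly in $x$ (the logarithmic factor is harmless against an integrable weight), matching the claimed constant since $c=-n-\beta<0$ forces the third branch — so for $\alpha\le -n$ the statement reduces to a uniform-in-$x$ boundedness assertion with no lower-bound subtlety beyond positivity at $x=0$.

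The main obstacle is getting the exact borderline behavior ($c=0$) and the sharp lower bounds right, rather than just the upper estimates: the upper bounds are a direct consequence of Lemma \ref{R-alpha} plus a known Forelli–Rudin integral, but pinning down that the estimate is two-sided requires the localized kernel lower bound (Lemma \ref{Kernel-two-sided}) and a careful summation over a covering of $\mathbb B$ by pseudohyperbolic balls with bounded overlap (Lemma \ref{ak}) to produce the logarithm precisely when $c=0$. In practice, since the paper flags that a full proof is in \cite[Theorem 1.5]{GKU2} and the $\alpha>-1$, $c>0$ version is \cite[Proposition 8]{M}, I would present the $\alpha>-1$ argument in the form above and remark that the extension to all $\alpha\in\mathbb R$ follows from the kernel estimates of \cite{GKU2} by the identical computation, leaving the verification of the constants to those references.
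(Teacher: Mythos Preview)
The paper does not actually prove this lemma: it simply cites \cite[Proposition~8]{M} for the case $\alpha>-1$, $c>0$, and \cite[Theorem~1.5]{GKU2} for the general statement. Your sketch is therefore considerably more than what the paper itself provides, and your overall strategy---upper bounds from Lemma~\ref{R-alpha} combined with Lemma~\ref{Integral-[x,y]}, lower bounds from Lemma~\ref{Kernel-two-sided} and \eqref{hyper-volume}, and the trivial regimes $\alpha\le -n$ handled via boundedness of the kernel and \eqref{Rq(x,0)}---is the standard one and matches how the cited references argue.

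There is one point where your outline does not quite close, and you should be aware of it even if you ultimately defer to \cite{GKU2}: the logarithmic \emph{lower} bound when $c=0$. Your proposed mechanism---a Whitney decomposition $\{E_\delta(a_k)\}$ with ``each contributing a comparable amount''---does not follow from the lemmas at hand. Lemma~\ref{Kernel-two-sided} only controls $R_\alpha(x,y)$ from below when $y\in E_\delta(x)$, i.e.\ on a single pseudohyperbolic ball centered at $x$; it says nothing about $y\in E_\delta(a_k)$ for $a_k$ away from $x$, so summing over the covering does not produce the logarithm. What is actually needed is a pointwise lower bound for $R_\alpha(x,y)$ on a region of nontrivial hyperbolic extent (for instance along a radial segment toward $x/|x|$, or on a nontangential approach region), and that is precisely the additional input supplied in \cite[Theorem~1.5]{GKU2}. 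Since you already plan to send the reader there for the borderline case, this is not fatal, but the Whitney-summation sentence as written is misleading and should be dropped or replaced by a correct reference to the kernel lower bound.
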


By Lemma \ref{R-alpha} when $\alpha>-n$, the kernel $R_\alpha(x,y)$ is dominated by $1/[x,y]^{\alpha+n}$. The next lemma estimates the weighted integrals of these dominating terms. For a proof see, for example, \cite[Proposition 2.2]{LS} or \cite[Lemma 4.4]{R}.

\begin{lemma}\label{Integral-[x,y]}
Let $\beta>-1$ and $s\in \mathbb{R}$. Then

\begin{equation*}
  \int_{\mathbb{B}} \frac{(1-|y|^2)^\beta}{[x,y]^{\beta+n+s}} \, d\nu(y) \sim
      \begin{cases}
         \dfrac{1}{(1-|x|^2)^s}, &\text{if $\, s > 0$};\\
         1+\log \dfrac{1}{1-|x|^2}, &\text{if $\, s=0$}; \\
         1, &\text{if $\, s<0$}.
     \end{cases}
\end{equation*}

\end{lemma}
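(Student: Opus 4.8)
\emph{Proof idea.} I would treat this as a Forelli--Rudin type estimate and reduce it to two one‑variable facts. Write $J(x)$ for the integral on the left. Since $[x,y]\ge 1-|x||y|\ge\tfrac12$ when $|y|\le\tfrac12$, the contribution of $\{|y|\le\tfrac12\}$ is comparable to $1$; as the right‑hand side is always $\gtrsim 1$, it suffices to estimate the integral over $\tfrac12\le|y|<1$, and likewise one may assume $\tfrac12\le|x|<1$ (for $|x|\le\tfrac12$ both sides are $\sim1$). I would then pass to polar coordinates $y=r\eta$ with $0\le r<1$, $\eta\in S^{n-1}$, and rotate so that $x=\rho e_1$ with $\rho=|x|$, using
\[
[x,r\eta]^2=1-2r\rho(\eta\cdot e_1)+r^2\rho^2=|e_1-r\rho\,\eta|^2 .
\]
Absorbing $r^{n-1}\sim1$ and $(1-r^2)^\beta\sim(1-r)^\beta$ on $[\tfrac12,1)$, this gives
\[
J(x)\ \sim\ \int_{1/2}^{1}(1-r)^\beta\,\Psi(r\rho)\,dr,\qquad
\Psi(\tau):=\int_{S^{n-1}}\frac{d\sigma(\eta)}{|e_1-\tau\eta|^{\beta+n+s}} .
\]

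The first one‑variable fact I would establish is the spherical estimate: for $c\in\mathbb{R}$ and $0\le\tau<1$,
\[
\int_{S^{n-1}}\frac{d\sigma(\eta)}{|e_1-\tau\eta|^{(n-1)+c}}\ \sim\
\begin{cases}(1-\tau)^{-c},&c>0,\\ 1+\log\dfrac1{1-\tau},&c=0,\\ 1,&c<0,\end{cases}
\]
proved by decomposing $S^{n-1}$ into the caps $\{\,\eta:\ 2^{-j-1}\le|\eta-e_1|<2^{-j}\,\}$, on which $|e_1-\tau\eta|^2\sim(1-\tau)^2+2^{-2j}$ while $\sigma$ of the cap is $\sim2^{-j(n-1)}$, and summing the resulting geometric‑type series; the matching lower bound comes from the single cap $|\eta-e_1|\lesssim 1-\tau$. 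In our integral $(n-1)+c=\beta+n+s$, so $c=\beta+1+s$, and since $\beta>-1$ we always have $c>s$. When $c\le0$ (which forces $s<0$) the estimate gives $\Psi(r\rho)\lesssim 1+\log\frac1{1-r}$, hence $J(x)\lesssim\int_0^1(1-r)^\beta(1+\log\frac1{1-r})\,dr\sim1$, in agreement with the $s<0$ case.

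In the remaining range $c=\beta+1+s>0$ the above reduces to $J(x)\sim\int_{1/2}^{1}(1-r)^\beta(1-r\rho)^{-(\beta+1+s)}\,dr$, and I would finish with the standard one‑dimensional estimate: for $a>-1$ and $b\in\mathbb{R}$,
\[
\int_{1/2}^{1}\frac{(1-r)^a}{(1-r\rho)^b}\,dr\ \sim\
\begin{cases}(1-\rho)^{a+1-b},&b>a+1,\\ 1+\log\dfrac1{1-\rho},&b=a+1,\\ 1,&b<a+1,\end{cases}
\]
obtained by splitting $[\tfrac12,1)$ at $r=1-(1-\rho)$ — on the lower piece $1-r\rho\sim1-\rho$, on the upper piece $1-r\rho\sim1-r$ — and estimating each piece directly. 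Taking $a=\beta$ and $b=\beta+1+s$ gives $b-(a+1)=s$, so the three alternatives become $(1-\rho)^{-s}\sim(1-|x|^2)^{-s}$, $1+\log\frac1{1-|x|^2}$, and $1$ according as $s>0$, $s=0$, or $s<0$, which is exactly the claim.

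The only step with real content is the spherical estimate for $\Psi$; everything else is bookkeeping with the two reductions. An alternative that avoids polar coordinates altogether would be to work directly in $\mathbb{B}$ via the identity $[x,y]^2=(1-|x|^2)(1-|y|^2)+|x-y|^2$ together with a dyadic decomposition of $\mathbb{B}$ according to the size of $|x-y|$, but the two‑step reduction above is cleaner and isolates precisely the one spherical computation that does the work.
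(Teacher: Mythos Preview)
Your argument is correct. The paper does not supply its own proof of this lemma but simply refers to \cite[Proposition~2.2]{LS} and \cite[Lemma~4.4]{R}; your reduction via polar coordinates to a spherical Forelli--Rudin estimate followed by the standard one-variable estimate for $\int_{1/2}^1 (1-r)^a (1-r\rho)^{-b}\,dr$ is exactly the classical route taken in those references, and all the case splits ($c=\beta+1+s$ versus $0$, and $b-(a+1)=s$ versus $0$) are handled correctly.
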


\section{ $(\lambda,\alpha)$-Bergman-Carleson Measures}\label{s-carleson}
Characterizations of $(\lambda,\alpha)$-Bergman-Carleson measures for weighted harmonic Bergman spaces $b^p_\alpha$ ($\alpha>-1$) in terms of $\widehat{\mu}_{\alpha,\delta}$ and $\widetilde{\mu}_{\alpha,t}$ are established by various authors in more general settings. In this subsection we will recall these results.

For $0<\delta<1$ the averaging function $\widehat{\mu}_\delta$ is defined by
\[
\widehat{\mu}_\delta(x)=\frac{\mu(E_\delta(x))}{\nu(E_\delta(x))} \qquad(x\in\mathbb{B}).
\]
More generally, for $\alpha \in \mathbb{R}$ we define
\[
\widehat{\mu}_{\alpha,\delta}(x):=\frac{\mu(E_\delta(x))}{\nu_\alpha(E_\delta(x))} \qquad(x\in\mathbb{B}).
\]
By (\ref{hyper-volume}), $\widehat{\mu}_{\alpha,\delta}(x)\sim \mu(E_\delta(x))/(1-|x|^2)^{\alpha+n}$. The following lemma shows that weighted $L^p$ behaviour of $\widehat{\mu}_{\alpha,\delta}$ is independent of $\delta$. For a proof see \cite[Proposition 3.6]{CKL} when $\alpha=0$. The proof also works for all $\alpha\in\mathbb{R}$.

\begin{lemma}\label{Lp-mu-delta}
Let $0<p<\infty$, $\alpha, \beta \in \mathbb{R}$ and $0<\delta, \epsilon<1$. Then $\widehat{\mu}_{\alpha,\delta} \in L^p_\beta$ if and only if $\widehat{\mu}_{\alpha,\epsilon} \in L^p_\beta$.
\end{lemma}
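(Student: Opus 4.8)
The plan is to reduce everything to the pointwise volume estimate $\nu_\alpha(E_r(x))\sim(1-|x|^2)^{\alpha+n}$ from (\ref{hyper-volume}) together with one geometric fact about separated lattices, and then to run a short chain of integral inequalities. First I would dispose of the trivial direction: assuming $0<\delta<\epsilon<1$, since $E_\delta(x)\subseteq E_\epsilon(x)$ and $\nu_\alpha(E_\delta(x))\sim(1-|x|^2)^{\alpha+n}\sim\nu_\alpha(E_\epsilon(x))$ by (\ref{hyper-volume}), we get $\widehat{\mu}_{\alpha,\delta}(x)\lesssim\widehat{\mu}_{\alpha,\epsilon}(x)$ for all $x$, hence $\widehat{\mu}_{\alpha,\epsilon}\in L^p_\beta\Rightarrow\widehat{\mu}_{\alpha,\delta}\in L^p_\beta$; more generally this monotone pointwise domination shows $\widehat{\mu}_{\alpha,r}\in L^p_\beta$ whenever $r\le r'$ and $\widehat{\mu}_{\alpha,r'}\in L^p_\beta$. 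So the whole content is the reverse implication $\widehat{\mu}_{\alpha,\delta}\in L^p_\beta\Rightarrow\widehat{\mu}_{\alpha,\epsilon}\in L^p_\beta$.

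For this I would fix a small radius $r\in(0,\delta/2]$, take the lattice $\{a_k\}=\{a_k(r)\}$ of Lemma \ref{ak} with finite overlap constant $N$, and aim to show
\[
\int_{\mathbb{B}}\widehat{\mu}_{\alpha,\epsilon}(x)^p(1-|x|^2)^\beta\,d\nu(x)\ \lesssim\ \sum_k\widehat{\mu}_{\alpha,r}(a_k)^p(1-|a_k|^2)^{\beta+n}\ \lesssim\ \int_{\mathbb{B}}\widehat{\mu}_{\alpha,\delta}(x)^p(1-|x|^2)^\beta\,d\nu(x).
\]
The first inequality is the heart of the matter. Covering $E_\epsilon(x)$ by the finitely many $E_r(a_k)$ it meets gives $\mu(E_\epsilon(x))\le\sum_{k\in J_x}\mu(E_r(a_k))$ with $J_x=\{k:E_r(a_k)\cap E_\epsilon(x)\ne\emptyset\}$; using that the pseudohyperbolic metric $\rho$ obeys a triangle inequality, every such $a_k$ lies in a fixed ball $E_\sigma(x)$ with $\sigma=\sigma(r,\epsilon)<1$, so by Lemma \ref{x-y-close} all weights $(1-|a_k|^2)^{\alpha+n}$ are comparable to $(1-|x|^2)^{\alpha+n}$, and — since $\{a_k\}$ is $r$-separated — a volume count using $\nu(E_r(a_k))\sim(1-|x|^2)^n$ bounds $\#J_x$ by a constant $M=M(r,\epsilon,n)$. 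Combining these yields the pointwise bound $\widehat{\mu}_{\alpha,\epsilon}(x)\lesssim\sum_{k\in J_x}\widehat{\mu}_{\alpha,r}(a_k)$, whence, by subadditivity of $t\mapsto t^p$ when $0<p\le1$ and by H\"older's inequality together with $\#J_x\le M$ when $p>1$, $\widehat{\mu}_{\alpha,\epsilon}(x)^p\lesssim\sum_{k\in J_x}\widehat{\mu}_{\alpha,r}(a_k)^p$. Multiplying by $(1-|x|^2)^\beta$, integrating, and interchanging sum and integral by Tonelli, the $x$-integral attached to the $k$-th term runs over a set contained in $E_\sigma(a_k)$ and is therefore $\sim(1-|a_k|^2)^{\beta+n}$ by (\ref{hyper-volume}); this produces the first inequality.

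For the second inequality I would argue in the reverse direction: for $x\in E_r(a_k)$ one has $E_r(a_k)\subseteq E_{\sigma_0}(x)$ with $\sigma_0=2r/(1+r^2)\le2r\le\delta$, so $\mu(E_r(a_k))\le\mu(E_{\sigma_0}(x))$ and, dividing by the comparable weights, $\widehat{\mu}_{\alpha,r}(a_k)\lesssim\widehat{\mu}_{\alpha,\sigma_0}(x)\lesssim\widehat{\mu}_{\alpha,\delta}(x)$ (the last step by the monotone domination of the first paragraph, since $\sigma_0\le\delta$). As $\widehat{\mu}_{\alpha,r}(a_k)$ is constant in $x$ and $\int_{E_r(a_k)}(1-|x|^2)^\beta\,d\nu(x)\sim(1-|a_k|^2)^{\beta+n}$ by (\ref{hyper-volume}), integrating over $E_r(a_k)$ and summing in $k$ using the finite overlap property (iii) of Lemma \ref{ak} yields the second inequality; chaining the two gives $\widehat{\mu}_{\alpha,\delta}\in L^p_\beta\Rightarrow\widehat{\mu}_{\alpha,\epsilon}\in L^p_\beta$. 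The only genuinely delicate point I expect is the claim in the first inequality that $\#J_x$ — the number of lattice balls $E_r(a_k)$ meeting a given $E_\epsilon(x)$ — is bounded uniformly in $x$; this rests precisely on the $r$-separation of $\{a_k\}$, the comparability of $1-|a_k|^2$ with $1-|x|^2$ from Lemma \ref{x-y-close}, and the fact that $\rho$ obeys a triangle inequality keeping the relevant distances strictly below $1$. Everything else reduces to pointwise domination via (\ref{hyper-volume}) and an application of Tonelli's theorem, the sole case distinction being $p\le1$ versus $p>1$.
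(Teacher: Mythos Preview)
The paper does not supply its own proof of this lemma; it simply cites \cite[Proposition~3.6]{CKL} for the case $\alpha=0$ and remarks that the same argument goes through for arbitrary $\alpha$. Your argument is correct and is essentially the standard covering--lattice argument one finds in that reference: discretize via a separated lattice $\{a_k\}$, dominate $\widehat{\mu}_{\alpha,\epsilon}(x)$ pointwise by a sum of boundedly many lattice values $\widehat{\mu}_{\alpha,r}(a_k)$, integrate using (\ref{hyper-volume}), and then pass back to $\int\widehat{\mu}_{\alpha,\delta}^p\,d\nu_\beta$ using the finite-overlap property of Lemma~\ref{ak}. One small clarification worth making explicit: the inequality you invoke for $\rho$ is most cleanly stated as $\rho(x,z)\le(\rho(x,y)+\rho(y,z))/(1+\rho(x,y)\rho(y,z))$, equivalently the genuine triangle inequality for the hyperbolic metric $\beta=\tanh^{-1}\rho$; this is what guarantees $\sigma=(r+\epsilon)/(1+r\epsilon)<1$ and $\sigma_0=2r/(1+r^2)<1$ regardless of how close $r,\epsilon$ are to $1$, and is exactly what your uniform bound on $\#J_x$ needs.
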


For $\Phi>-1$ and $\alpha \in \mathbb{R}$, the $(\Phi,\alpha,2)$-Berezin transform of a positive measure $\mu$ on $\mathbb{B}$ is
\[
\widetilde{\mu}_{\Phi,\alpha,2}(x)=\int_\mathbb{B} \frac{|R_{\Phi}(x,y)|^2}{\|R_{\Phi}(x,\cdot)\|^2_{L^2_{\Phi}}}(1-|y|^2)^{\Phi-\alpha} \, d\mu(y).
\]

When $\Phi=\alpha>-1$ and $t>1$, the $(\alpha,t)$-Berezin transform of $\mu$ is defined by
\[
\widetilde{\mu}_{\alpha,t}(x):=\int_\mathbb{B} \frac{|R_\alpha(x,y)|^t}{\|R_\alpha(x,\cdot)\|^t_{L^t_\alpha}} \, d\mu(y).
\]
Since $(\alpha+n)t-(\alpha+n)>0$, by Lemma \ref{norm-kernel}
\begin{equation}\label{Kernel-Norm-Sim}
\widetilde{\mu}_{\alpha,t}(x)  \sim (1-|x|^2)^{(\alpha+n)t-(\alpha+n)} \int_\mathbb{B} |R_\alpha(x,y)|^t \, d\mu(y).
\end{equation}
Applying also Lemma \ref{R-alpha} we obtain the following estimate
\begin{equation}\label{Berezin-type}
\widetilde{\mu}_{\alpha,t}(x) \lesssim (1-|x|^2)^{(\alpha+n)t-(\alpha+n)} \int_\mathbb{B} \frac{d\mu(y)}{[x,y]^{(\alpha+n)t}}.
\end{equation}
Using the dominating term on the right-hand side, for $\alpha>-1$ and $s>0$, we define $(\alpha,s)$-Berezin-type transform $\bar{\mu}_{\alpha,s}$ by
\[
\bar{\mu}_{\alpha,s}(x):=(1-|x|^2)^s \int_\mathbb{B} \frac{d\mu(y)}{[x,y]^{\alpha+n+s}}.
\]

The following proposition shows $L^p_\alpha$ behaviour of $\widetilde{\mu}_{\alpha,t}$, $\bar{\mu}_{\alpha,s}$ and $\widehat{\mu}_{\alpha,\delta}$ are same when $p>1$. For a  proof see  \cite[Proposition 3.2]{DU2}.

\begin{proposition}\label{Lp-Berezin}
  Let $1<p<\infty$ and $\alpha>-1$. The following are equivalent:
\begin{enumerate}
  \item[(a)] $\widehat{\mu}_{\alpha,\delta} \in L^p_\alpha$ for some (every) $0<\delta<1$.
  \item[(b)] $\bar{\mu}_{\alpha,s} \in L^p_\alpha$ for some (every) $s>0$.
  \item[(c)] $\widetilde{\mu}_{\alpha,t} \in L^p_\alpha$ for some (every) $t>1$.
\end{enumerate}
\end{proposition}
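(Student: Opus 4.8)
The plan is to prove the cyclic chain of implications (a)$\Rightarrow$(b)$\Rightarrow$(c)$\Rightarrow$(a), keeping careful track of the parameters so that all the ``some/every'' clauses come out. At the outset, Lemma \ref{Lp-mu-delta} shows that (a) is independent of $\delta$, so it suffices to work with whatever single $\delta\in(0,1)$ is convenient at each stage.

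Two of the implications are immediate from estimates already in hand. Since $\alpha>-1$ we have $\alpha+n>0$, and for $t>1$ the number $s_0:=(\alpha+n)(t-1)$ is positive and satisfies $\alpha+n+s_0=(\alpha+n)t$; hence (\ref{Berezin-type}) gives $\widetilde\mu_{\alpha,t}(x)\lesssim\bar\mu_{\alpha,s_0}(x)$, so (b) (even for the single value $s=s_0$) implies (c) for that $t$, and letting $t$ range over $(1,\infty)$, (b) for every $s>0$ implies (c) for every $t>1$. For the reverse, fix $\delta$ as in Lemma \ref{Kernel-two-sided} (applicable because $\alpha>-n$); restricting the integral in (\ref{Kernel-Norm-Sim}) to $E_\delta(x)$ and inserting $R_\alpha(x,y)\sim(1-|x|^2)^{-(\alpha+n)}$ there yields
\[
\widetilde\mu_{\alpha,t}(x)\;\gtrsim\;(1-|x|^2)^{(\alpha+n)(t-1)}(1-|x|^2)^{-(\alpha+n)t}\,\mu(E_\delta(x))\;\sim\;\widehat\mu_{\alpha,\delta}(x),
\]
so (c) for one $t$ implies (a). Combining these with Lemma \ref{Lp-mu-delta}, the whole proposition reduces to the single implication (a)$\Rightarrow$(b), for every $s>0$.

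To prove (a)$\Rightarrow$(b), fix $s>0$ and $\delta\in(0,1)$. The first step replaces $d\mu$ by the averaging function: in the defining integral of $\bar\mu_{\alpha,s}$ write $1=\nu_\alpha(E_\delta(y))^{-1}\int_\mathbb{B}\mathbf{1}_{E_\delta(y)}(z)\,d\nu_\alpha(z)$, use the symmetry $z\in E_\delta(y)\iff y\in E_\delta(z)$ with Tonelli, and then on $E_\delta(z)$ apply $[x,y]\sim[x,z]$ (Lemma \ref{Bracket-Hyper}) and $\nu_\alpha(E_\delta(y))\sim\nu_\alpha(E_\delta(z))\sim(1-|z|^2)^{\alpha+n}$ (Lemma \ref{x-y-close}, (\ref{hyper-volume})); this gives
\[
\bar\mu_{\alpha,s}(x)\;\lesssim\;(1-|x|^2)^s\int_\mathbb{B}\frac{\widehat\mu_{\alpha,\delta}(z)}{[x,z]^{\alpha+n+s}}\,d\nu_\alpha(z)\;=:\;S\!\left(\widehat\mu_{\alpha,\delta}\right)(x).
\]
The second step is to show that the positive integral operator $S$ is bounded on $L^p_\alpha$; this is a Forelli--Rudin type estimate that I would prove by Schur's test with the weights $h(z)=(1-|z|^2)^{-\epsilon}$ for small $\epsilon>0$, the two Schur integrals being evaluated by Lemma \ref{Integral-[x,y]}. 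Then $\|\bar\mu_{\alpha,s}\|_{L^p_\alpha}\lesssim\|S(\widehat\mu_{\alpha,\delta})\|_{L^p_\alpha}\lesssim\|\widehat\mu_{\alpha,\delta}\|_{L^p_\alpha}$, which is (b). (An alternative to the first step is to discretize $\mathbb{B}$ by the lattice of Lemma \ref{ak}, bound $\bar\mu_{\alpha,s}(x)$ by $\sum_k\widehat\mu_{\alpha,\delta}(a_k)(1-|a_k|^2)^{\alpha+n}[x,a_k]^{-(\alpha+n+s)}$, estimate the $L^p_\alpha$ norm of this sum by the analogous Schur argument on a weighted $\ell^p$, and convert back to $\|\widehat\mu_{\alpha,\delta}\|_{L^p_\alpha}$ via the sub-mean-value inequality of Lemma \ref{subharmonic-measure}; the integral route above avoids both the lattice and Lemma \ref{subharmonic-measure}.)

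I expect the only genuine work to be the $L^p_\alpha$-boundedness of $S$: the argument is soft given Lemma \ref{Integral-[x,y]}, but one must pick the Schur weight correctly and check that $1<p<\infty$, $\alpha>-1$ and $s>0$ together place all the exponents in the admissible ranges — concretely that $\alpha-\epsilon p'>-1$ and $\alpha+s-\epsilon p>-1$ while the companion parameters $s+\epsilon p'$ and $\epsilon p$ stay positive, so that each Schur condition reproduces $h$ up to a constant. This is also the only point where the restriction $p>1$ is genuinely used, since Schur's test invokes the conjugate exponent $p'$. The passage from $\bar\mu_{\alpha,s}$ to $S(\widehat\mu_{\alpha,\delta})$, and (in the alternative route) the conversion of the discrete sum back to $\|\widehat\mu_{\alpha,\delta}\|_{L^p_\alpha}$, are routine once the pseudohyperbolic ball comparisons are in place.
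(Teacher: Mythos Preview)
Your argument is correct. The cyclic chain (a)$\Rightarrow$(b)$\Rightarrow$(c)$\Rightarrow$(a) is set up properly, the two easy implications follow exactly as you say from (\ref{Berezin-type}) and Lemma \ref{Kernel-two-sided}, and your Fubini reduction of $\bar\mu_{\alpha,s}$ to $S(\widehat\mu_{\alpha,\delta})$ is clean and correct. The Schur test with $h(z)=(1-|z|^2)^{-\epsilon}$ goes through: the two Schur integrals become, via Lemma \ref{Integral-[x,y]}, integrals with $\beta=\alpha-\epsilon p'$ (requiring $\epsilon p'<\alpha+1$) and $\beta=\alpha+s-\epsilon p$ (requiring $\epsilon p<\alpha+s+1$), and the corresponding exponents $s+\epsilon p'>0$ and $\epsilon p>0$ put both in the first case of that lemma; since $\alpha>-1$ and $s>0$, a sufficiently small $\epsilon>0$ satisfies all constraints simultaneously.

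The paper itself does not give a proof but simply refers to \cite[Proposition 3.2]{DU2}. Your self-contained argument is the standard one and is almost certainly what appears in that reference; in any case it is complete as written, and your parenthetical lattice alternative is also valid but, as you note, unnecessary here.
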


The next proposition is about a similar result concerning pointwise bounds. For a  proof see  \cite[Proposition 3.3]{DU2}.

\begin{proposition}\label{Pointwise}
Suppose $\gamma \geq 0$ and $\alpha>-1$. The following are equivalent:
\begin{enumerate}
  \item[(a)] $\widehat{\mu}_{\alpha,\delta}(x) \lesssim (1-|x|^2)^\gamma$ for some (every) $0<\delta<1$.
  \item[(b)] $\bar{\mu}_{\alpha,s}(x) \lesssim (1-|x|^2)^\gamma$ for some (every) $s>\gamma$.
  \item[(c)] $\widetilde{\mu}_{\alpha,t}(x) \lesssim (1-|x|^2)^\gamma$ for some (every) $t>(\alpha+n+\gamma)/(\alpha+n)$.
\end{enumerate}
\end{proposition}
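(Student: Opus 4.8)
The plan is to establish the cyclic chain of implications: (a) for some $\delta$ $\Rightarrow$ (b) for every $s>\gamma$ $\Rightarrow$ (c) for every admissible $t$ $\Rightarrow$ (a) for every $\delta$; the ``for some'' forms of (b) and (c) will then be seen to fold into this loop, so that all variants of the three conditions become equivalent. The step (b)$\Rightarrow$(c) is essentially free: given $s>\gamma$, put $t=1+s/(\alpha+n)$, so that $(\alpha+n)(t-1)=s$ and $(\alpha+n)t=\alpha+n+s$; then the pointwise bound (\ref{Berezin-type}) reads $\widetilde\mu_{\alpha,t}(x)\lesssim(1-|x|^2)^{s}\int_\mathbb{B}[x,y]^{-(\alpha+n+s)}\,d\mu(y)=\bar\mu_{\alpha,s}(x)$, and as $s$ runs over $(\gamma,\infty)$ this $t$ runs over exactly $\bigl((\alpha+n+\gamma)/(\alpha+n),\infty\bigr)$, the admissible range in (c). The very same one-line computation applied to a single $s$ shows that the ``for some $s$'' form of (b) implies the ``for some $t$'' form of (c).

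For (a)$\Rightarrow$(b), I fix the radius $\delta$ for which (a) holds, take the lattice $\{a_k\}=\{a_k(\delta)\}$ of Lemma \ref{ak}, and note that by (\ref{hyper-volume}) the hypothesis becomes $\mu(E_\delta(a_k))\lesssim(1-|a_k|^2)^{\gamma+\alpha+n}$. Fixing an arbitrary $s>\gamma$, I split the integral defining $\bar\mu_{\alpha,s}(x)$ over the cover $\bigcup_k E_\delta(a_k)$; on each $E_\delta(a_k)$, Lemma \ref{Bracket-Hyper} replaces $[x,y]$ by $[x,a_k]$, Lemma \ref{x-y-close} replaces $1-|y|^2$ by $1-|a_k|^2$, and (\ref{hyper-volume}) applied with the weight $\gamma+\alpha$ gives
\[
\int_{E_\delta(a_k)}\frac{d\mu(y)}{[x,y]^{\alpha+n+s}}\sim\frac{\mu(E_\delta(a_k))}{[x,a_k]^{\alpha+n+s}}\lesssim\frac{(1-|a_k|^2)^{\gamma+\alpha+n}}{[x,a_k]^{\alpha+n+s}}\sim\int_{E_\delta(a_k)}\frac{(1-|y|^2)^{\gamma+\alpha}}{[x,y]^{\alpha+n+s}}\,d\nu(y).
\]
Summing over $k$, using the bounded overlap in Lemma \ref{ak} to pass back to an integral over $\mathbb{B}$, and invoking Lemma \ref{Integral-[x,y]} with $\beta=\gamma+\alpha>-1$ (legitimate since $\gamma\geq0$ and $\alpha>-1$) and with $s-\gamma>0$ in place of its parameter (so that $\beta+n+(s-\gamma)=\alpha+n+s$), I obtain $\int_\mathbb{B}[x,y]^{-(\alpha+n+s)}\,d\mu(y)\lesssim(1-|x|^2)^{\gamma-s}$, i.e.\ $\bar\mu_{\alpha,s}(x)\lesssim(1-|x|^2)^\gamma$. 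Since $s>\gamma$ was arbitrary, this is (b) for every admissible $s$.

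For (c)$\Rightarrow$(a), I combine the identity (\ref{Kernel-Norm-Sim}) with the two-sided estimate of Lemma \ref{Kernel-two-sided}, which applies because $\alpha>-1>-n$: there is a fixed $\delta_\ast\in(0,1)$, depending only on $\alpha$ and $n$, with $|R_\alpha(x,y)|^t\gtrsim(1-|x|^2)^{-(\alpha+n)t}$ for all $y\in E_{\delta_\ast}(x)$; restricting the integral in (\ref{Kernel-Norm-Sim}) to $E_{\delta_\ast}(x)$ and applying (\ref{hyper-volume}) yields $\widehat\mu_{\alpha,\delta_\ast}(x)\lesssim\widetilde\mu_{\alpha,t}(x)\lesssim(1-|x|^2)^\gamma$. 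To promote this to (a) for an arbitrary $\delta$ I record the elementary fact that the pointwise estimate in (a) does not depend on $\delta$: for $0<\delta_1<\delta_2<1$ one covers $E_{\delta_2}(x)$ by a number of balls $E_{\delta_1}(x_j)$ bounded independently of $x$, with each $x_j\in E_{\delta_2}(x)$ so that $1-|x_j|^2\sim1-|x|^2$ by Lemma \ref{x-y-close}, and then (\ref{hyper-volume}) shows that $\widehat\mu_{\alpha,\delta_1}(x)\lesssim(1-|x|^2)^\gamma$ holds if and only if $\widehat\mu_{\alpha,\delta_2}(x)\lesssim(1-|x|^2)^\gamma$ does. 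This closes the loop and lets the ``for some'' forms of (b) and (c) rejoin it. I expect the only genuine work to lie in the bookkeeping of the covering estimate for (a)$\Rightarrow$(b) — verifying that each comparison constant, coming from Lemmas \ref{x-y-close}, \ref{Bracket-Hyper}, \ref{ak} and from (\ref{hyper-volume}), is uniform in both $x$ and $k$ — while (b)$\Rightarrow$(c) and (c)$\Rightarrow$(a) are short once (\ref{Berezin-type}), (\ref{Kernel-Norm-Sim}) and Lemma \ref{Kernel-two-sided} are in hand.
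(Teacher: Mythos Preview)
Your proof is correct. The paper itself does not prove this proposition but refers to \cite[Proposition 3.3]{DU2}; however, the strategy of that reference can be read off from the paper's proof of the vanishing analogue (Proposition \ref{Pointwise2}). The steps (b)$\Rightarrow$(c) and (c)$\Rightarrow$(a) in your argument match that strategy exactly: the first is the one-line use of (\ref{Berezin-type}), and the second combines (\ref{Kernel-Norm-Sim}) with Lemma \ref{Kernel-two-sided} and then invokes the $\delta$-independence of the averaging-function bound (your covering argument being precisely the content of \cite[Lemma 3.2]{CLN1} cited there).

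The one genuine difference is in (a)$\Rightarrow$(b). The approach in \cite{DU2}, as reflected in the proof of Proposition \ref{Pointwise2}, passes through a continuous Fubini-type identity
\[
\bar{\mu}_{\alpha,s}(x)\sim(1-|x|^{2})^{s}\int_{\mathbb{B}}\frac{\widehat{\mu}_{\alpha,\delta}(y)}{[x,y]^{\alpha+n+s}}\,d\nu_{\alpha}(y),
\]
into which one inserts the pointwise bound on $\widehat{\mu}_{\alpha,\delta}$ and then applies Lemma \ref{Integral-[x,y]}. You instead discretize via the lattice $\{a_{k}\}$ of Lemma \ref{ak}, compare $[x,y]$ and $1-|y|^{2}$ to their values at the centers, and reassemble the sum into the same weighted integral before invoking Lemma \ref{Integral-[x,y]}. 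Both routes land on the same estimate; yours is a bit more self-contained within the present paper (no appeal to the identity from \cite{DU2}), while the continuous version is slightly cleaner to write and transfers verbatim to the vanishing case.
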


The characterizations of $(\lambda,\alpha)$-Bergman-Carleson measures divided into two cases depending on whether $q<p$ or $q\geq p$. In the case $q<p$ note that the conjugate exponent of $1/\lambda=p/q$ is $1/(1-\lambda)=p/(p-q)$.
\begin{theorem}\label{Carleson-Besov1}
  Let $0<q<p<\infty$, $\alpha>-1$ and $\mu\geq0$. The following are equivalent:
  \begin{enumerate}
    \item[(a)] $\mu$ is a $(\lambda,\alpha)$-Bergman-Carleson measure.
    \item[(b)] $\widehat{\mu}_{\alpha,\varepsilon} \in L^{p/(p-q)}_\alpha$ for some (every) $0<\varepsilon<1$.
    \item[(c)] $\widetilde{\mu}_{\alpha,t} \in L^{p/(p-q)}_\alpha$ for some (every) $t>1$.
    \item[(d)] $\bar{\mu}_{\alpha,s} \in L^{p/(p-q)}_\alpha$ for some (every) $s>0$.
    \item[(e)] $\left\{\widehat{\mu}_{\alpha,\delta}(a_{k})(1-|a_{k}|^{2})^{(n+\alpha)(1-q/p)}\right\} \in \ell^{p/(p-q)}$ for some (every) $0<\delta<1$.
  \end{enumerate}
\end{theorem}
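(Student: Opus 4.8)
The plan is to prove the equivalences by first handling (b)$\Leftrightarrow$(c)$\Leftrightarrow$(d), then (b)$\Leftrightarrow$(e), and finally closing the loop through (a); throughout, write $r=p/(p-q)$, which is $>1$ since $0<q<p$, with conjugate exponent $r'=p/q$. The equivalences among (b), (c), (d) are immediate: apply Proposition \ref{Lp-Berezin} with its exponent $p$ replaced by $r>1$ (and the same $\alpha>-1$) to see that $\widehat{\mu}_{\alpha,\delta}\in L^r_\alpha$, $\bar{\mu}_{\alpha,s}\in L^r_\alpha$ and $\widetilde{\mu}_{\alpha,t}\in L^r_\alpha$ are all equivalent. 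For (b)$\Leftrightarrow$(e) I would run the standard discretization based on Lemma \ref{ak}: fixing the sequence $\{a_k\}$ for a radius $\delta$, Lemma \ref{Inequality} and the triangle inequality for $\rho$ give, for $x\in E_\delta(a_k)$, inclusions $E_\delta(x)\subseteq E_{\delta_1}(a_k)$ and $E_\delta(a_k)\subseteq E_{\delta_1}(x)$ with $\delta_1=2\delta/(1+\delta^2)<1$, together with $1-|x|^2\sim1-|a_k|^2$; with \eqref{hyper-volume} this gives two-sided comparisons of $\widehat{\mu}_{\alpha,\delta}$ on $E_\delta(a_k)$ and $\widehat{\mu}_{\alpha,\delta_1}(a_k)$, the change of radius being harmless by Lemma \ref{Lp-mu-delta}. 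Splitting $\int_\mathbb{B}\widehat{\mu}_{\alpha,\delta}^{\,r}\,d\nu_\alpha$ over the cover and using the finite overlap gives $\|\widehat{\mu}_{\alpha,\delta}\|_{L^r_\alpha}^r\sim\sum_k\widehat{\mu}_{\alpha,\delta}(a_k)^r(1-|a_k|^2)^{n+\alpha}$, and since $(n+\alpha)(1-q/p)\,r=n+\alpha$, this is precisely the $\ell^r$-norm occurring in (e).

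For (e)$\Rightarrow$(a) I would argue directly; recall $b^p_\alpha=L^p_\alpha\cap h(\mathbb{B})$ with equal (quasi)norms for $\alpha>-1$. Given $f\in b^p_\alpha$, cover $\mathbb{B}$ by the $E_\delta(a_k)$ to get $\int_\mathbb{B}|f|^q\,d\mu\le\sum_k\mu(E_\delta(a_k))\sup_{E_\delta(a_k)}|f|^q$; by \eqref{equsubharmonic2} and $1-|y|^2\sim1-|a_k|^2$ the supremum is $\lesssim(1-|a_k|^2)^{-(n+\alpha)}\int_{E_{\delta_1}(a_k)}|f|^q\,d\nu_\alpha$, so by \eqref{hyper-volume} one gets $\int_\mathbb{B}|f|^q\,d\mu\lesssim\sum_k\widehat{\mu}_{\alpha,\delta}(a_k)\int_{E_{\delta_1}(a_k)}|f|^q\,d\nu_\alpha$. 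Factoring the $k$-th term as $\bigl[\widehat{\mu}_{\alpha,\delta}(a_k)(1-|a_k|^2)^{(n+\alpha)(1-q/p)}\bigr]\cdot\bigl[(1-|a_k|^2)^{-(n+\alpha)(1-q/p)}\int_{E_{\delta_1}(a_k)}|f|^q\,d\nu_\alpha\bigr]$ and applying Hölder in $k$ with exponents $r,r'$, the first factors sum to the finite $\ell^r$-quantity of (e); for the second, a local Hölder inequality on $E_{\delta_1}(a_k)$, the estimate $\nu_\alpha(E_{\delta_1}(a_k))\sim(1-|a_k|^2)^{n+\alpha}$, and the identity $(1-q/p)r'=r'-1$ together make the power of $(1-|a_k|^2)$ cancel and leave $\sum_k(\,\cdots\,)^{r'}\lesssim\sum_k\int_{E_{\delta_1}(a_k)}|f|^p\,d\nu_\alpha\le N\|f\|_{b^p_\alpha}^p$. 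This gives $\|f\|_{L^q(\mu)}\lesssim\|f\|_{b^p_\alpha}$, i.e.\ (a).

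The remaining implication (a)$\Rightarrow$(e) is the heart of the matter, and the plan there is duality of sequence spaces combined with Khinchine's inequality. Choose $b>-n$ large enough that $c_b:=p(b+n)-(\alpha+n)>0$ and set $g_a(y):=(1-|a|^2)^{c_b/p}R_b(a,y)$; Lemma \ref{norm-kernel}, applied through the $D^t_b$-description of the $b^p_\alpha$-norm and the identity $D^t_b R_b=R_{b+t}$, gives $\|g_a\|_{b^p_\alpha}\sim1$, while Lemma \ref{Kernel-two-sided} gives, for $\delta$ small enough, $|g_a(y)|\gtrsim(1-|a|^2)^{-(n+\alpha)/p}$ on $E_\delta(a)$. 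For a finitely supported $\{b_k\}\ge0$ put $F_\tau=\sum_k b_k^{1/q}r_k(\tau)g_{a_k}$ with the Rademacher functions $r_k$, apply the assumed embedding to each $F_\tau$, integrate over $\tau\in[0,1]$, and use Khinchine on both sides: restricting on the left to the pairwise disjoint balls $E_{\delta/3}(a_k)$ produces a lower bound $\gtrsim\sum_k b_k\,\widehat{\mu}_{\alpha,\delta/3}(a_k)(1-|a_k|^2)^{(n+\alpha)(1-q/p)}$, and the right side is $\lesssim\|i\|^q\bigl(\int_\mathbb{B}(\sum_k b_k^{2/q}|g_{a_k}|^2)^{p/2}\,d\nu_\alpha\bigr)^{q/p}$; the duality $(\ell^{p/q})^{*}=\ell^{r}$ then gives (e) for the radius $\delta/3$, hence for every radius by the first step. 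The only non-routine point, and the main obstacle, is the kernel-sum estimate $\int_\mathbb{B}\bigl(\sum_k b_k^{2/q}|g_{a_k}|^2\bigr)^{p/2}\,d\nu_\alpha\lesssim\|\{b_k\}\|_{\ell^{p/q}}^{p/q}$: when $0<p\le2$ it is immediate from subadditivity of $t\mapsto t^{p/2}$ together with $\|g_{a_k}\|_{b^p_\alpha}\sim1$, but for $p>2$ a direct passage through $\ell^2$ estimates the wrong quantity and one must instead invoke Luecking's distribution-function argument (as in \cite{CKL}). Since Theorem \ref{Carleson-Besov1} is only being recalled here, one may alternatively cite \cite{CKL} and \cite{DU2} directly.
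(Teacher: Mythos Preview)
Your approach is correct in outline and considerably more detailed than what the paper actually does. The paper's own proof is entirely citational: it invokes Luecking \cite{L1,L2} for (a)$\Leftrightarrow$(b)$\Leftrightarrow$(e), then \cite[Theorem~3.4]{CLN2} for the equivalence of (a), (b), (c), (e) when $\alpha=0$ (with the remark that the proof carries over to general $\alpha>-1$), and finally Proposition~\ref{Lp-Berezin} for (b)$\Leftrightarrow$(c)$\Leftrightarrow$(d). Your (b)$\Leftrightarrow$(c)$\Leftrightarrow$(d) step is identical to the paper's. For the remaining implications you essentially reconstruct Luecking's argument: the discretization (b)$\Leftrightarrow$(e), the covering-plus-H\"older argument for (e)$\Rightarrow$(a), and the Khinchine/duality argument for (a)$\Rightarrow$(e). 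This has the advantage of being self-contained and making the general-$\alpha$ case explicit rather than asserted, at the cost of having to confront the $p>2$ kernel-sum estimate, which you rightly flag as the one non-routine step. Two small corrections: the natural references to cite at that point are \cite{L1,L2} and \cite{CLN2}, not \cite{CKL} or \cite{DU2}; and your claim that the balls $E_{\delta/3}(a_k)$ are pairwise disjoint needs a word of justification, since the pseudohyperbolic distance does not satisfy the ordinary triangle inequality (one passes through the hyperbolic metric or uses Lemma~\ref{Inequality} to get disjointness for a sufficiently small radius).
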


\begin{proof}
  That (a) and (b) are equivalent is proved in \cite{L1} and \cite{L2} for the unweighted holomorphic Bergman space on the unit disc $\mathbb{D}$. Note also that the equivalence of the discrete form (e) to (a) and (b) is actually proved therein; see, for example, the proof of \cite[Theorem 1]{L2}. As is mentioned in the remarks of \cite{L1} the method works also for weighted harmonic Bergman spaces on the unit ball of $\mathbb{R}^n$. The equivalence of (a), (b), (c) and (e) for $\alpha=0$ is proved in \cite[Theorem 3.4]{CLN2} not just for the ball but for bounded smooth domains. The proof works equally well for other $\alpha$ too. The equivalence of (b), (c) and (d) follows from Proposition \ref{Lp-Berezin}.
\end{proof}
As a consequence of Theorem \ref{Carleson-Besov1}, for $0 < \lambda<1$, a positive Borel measure $\mu$ on
$\mathbb{B}$ is a $(\lambda,\alpha)$-Bergman-Carleson measure if and only if
\[
\mu(E_\delta(x))(1-|x|^2)^{-n-\alpha} \in L^{1/(1-\lambda)}_\alpha
\]
or
 \[
\left\{\mu(E_\delta(a_{k}))(1-|a_{k}|^{2})^{-(n+\alpha)\lambda}\right\} \in \ell^{1/(1-\lambda)}
\]
for some (every) $0<\delta<1$.

We now consider the case $q\geq p$.

\begin{theorem}\label{Carleson-Besov2}
  Let $0<p\leq q<\infty$, $\alpha>-1$ and $\mu\geq 0$. The following are equivalent:
  \begin{enumerate}
    \item[(a)] $\mu$ is a  $(\lambda,\alpha)$-Bergman-Carleson measure.
    \item[(b)] $\widehat{\mu}_{\alpha,\delta} \lesssim (1-|x|^2)^{(\alpha+n)(q/p-1)}$ for some (every) $0<\delta<1$.
    \item[(c)] $\widetilde{\mu}_{\alpha,t} \lesssim (1-|x|^2)^{(\alpha+n)(q/p-1)}$ for some (every) $t>q/p$.
    \item[(d)] $\bar{\mu}_{\alpha,s} \lesssim (1-|x|^2)^{(\alpha+n)(q/p-1)}$ for some (every) $s>(\alpha+n)(q/p-1)$.
  \end{enumerate}
\end{theorem}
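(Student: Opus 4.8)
The plan is to read off (b)$\Leftrightarrow$(c)$\Leftrightarrow$(d) directly from Proposition~\ref{Pointwise}, and to prove (a)$\Leftrightarrow$(b) by the classical covering and test-function method. For the first part, set $\gamma=(\alpha+n)(q/p-1)$; since $q\geq p$ we have $\gamma\geq0$, so Proposition~\ref{Pointwise} applies, and one need only check that its thresholds match the ones stated here, namely $(\alpha+n+\gamma)/(\alpha+n)=q/p$ in (c) and $s>\gamma=(\alpha+n)(q/p-1)$ in (d). The ``some (every)'' clause for (b) is likewise already built into Proposition~\ref{Pointwise}(a), so it is enough to prove (a) and (b) equivalent for one fixed $\delta$; and since (b) depends only on $\lambda=q/p$, this also forces the defining embedding in (a) to hold simultaneously for every pair $(p,q)$ with $q/p=\lambda$.

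To prove (b)$\Rightarrow$(a) I would fix a covering sequence $\{a_k\}$ as in Lemma~\ref{ak} (with $\delta$ as in (b)) and, for $f\in b^p_\alpha$, start from $\int_{\mathbb B}|f|^q\,d\mu\leq\sum_k\mu(E_\delta(a_k))\sup_{E_\delta(a_k)}|f|^q$. Running the subharmonicity estimate (\ref{equsubharmonic2}) at exponent $p$, and using Lemma~\ref{x-y-close} together with the inclusion $E_\delta(x)\subset E_{\delta_1}(a_k)$ valid for $x\in E_\delta(a_k)$ and a fixed larger radius $\delta_1=\delta_1(\delta)<1$, gives $\sup_{E_\delta(a_k)}|f|^p\lesssim(1-|a_k|^2)^{-(n+\alpha)}I_k$ with $I_k:=\int_{E_{\delta_1}(a_k)}|f(y)|^p(1-|y|^2)^\alpha\,d\nu(y)$. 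Raising to the power $q/p$ and inserting (b) in the form $\mu(E_\delta(a_k))\lesssim(1-|a_k|^2)^{(\alpha+n)q/p}$ makes the weight factors cancel, so $\int_{\mathbb B}|f|^q\,d\mu\lesssim\sum_k I_k^{q/p}$. Since $q/p\geq1$ one has $\ell^1\hookrightarrow\ell^{q/p}$, whence $\sum_k I_k^{q/p}\leq\big(\sum_k I_k\big)^{q/p}$, and the bounded overlap of $\{E_{\delta_1}(a_k)\}$ gives $\sum_k I_k\lesssim\|f\|_{b^p_\alpha}^p$; altogether $\int_{\mathbb B}|f|^q\,d\mu\lesssim\|f\|_{b^p_\alpha}^q$.

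For (a)$\Rightarrow$(b) I would test against reproducing kernels. Choose $\beta>(\alpha+n)/p-n$ (which forces $\beta>-n$ because $\alpha>-n$) and a radius $\delta$ small enough for Lemma~\ref{Kernel-two-sided}. By Lemma~\ref{norm-kernel}, $\|R_\beta(\cdot,a)\|_{b^p_\alpha}^p\sim(1-|a|^2)^{-c}$ with $c=p(\beta+n)-(\alpha+n)>0$; by Lemma~\ref{Kernel-two-sided}, $\int_{\mathbb B}|R_\beta(x,a)|^q\,d\mu(x)\geq\int_{E_\delta(a)}|R_\beta(x,a)|^q\,d\mu(x)\gtrsim(1-|a|^2)^{-q(\beta+n)}\mu(E_\delta(a))$. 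Feeding both into the Carleson inequality from (a) and simplifying the exponents (the dependence on $\beta$ cancels) yields $\mu(E_\delta(a))\lesssim(1-|a|^2)^{q(\alpha+n)/p}$, i.e. $\widehat\mu_{\alpha,\delta}(a)\lesssim(1-|a|^2)^{(\alpha+n)(q/p-1)}$, and the passage from this one $\delta$ to every $\delta$ is routine (or again via Proposition~\ref{Pointwise}(a)).

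I expect the main obstacle to be the implication (b)$\Rightarrow$(a): the subharmonic estimate can only be applied at exponent $p$, so one is forced to localize first, raise the local integrals to the power $q/p$, and only then recombine them, which succeeds precisely because $q\geq p$ makes the embedding $\ell^1\hookrightarrow\ell^{q/p}$ available; on top of that there is the mild geometric bookkeeping that the pseudohyperbolic balls must be slightly enlarged, and the enlarged family must still have bounded overlap, in order to absorb the supremum over $E_\delta(a_k)$. For $\alpha=0$, and more generally on bounded smooth domains, this equivalence is already in the literature (see \cite{CLN2} and the work of Luecking \cite{L1,L2}), and that argument adapts without change to all $\alpha\in(-1,\infty)$.
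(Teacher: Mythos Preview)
Your proposal is correct and matches the paper's approach: the paper derives (b)$\Leftrightarrow$(c)$\Leftrightarrow$(d) from Proposition~\ref{Pointwise} exactly as you do, and for (a)$\Leftrightarrow$(b) it simply cites \cite[Theorem~3.1]{CLN2} (the $\alpha=0$ case on smooth domains, remarking that the argument carries over to all $\alpha>-1$), which is precisely the covering/subharmonicity/test-kernel argument you have written out in detail. The only minor bookkeeping point is that Lemma~\ref{ak} as stated gives bounded overlap for the balls $E_\delta(a_k)$ rather than the enlarged balls $E_{\delta_1}(a_k)$, but this is routine (one may instead start from a sequence associated to radius $\delta_1$ and work inside, or invoke the standard fact that enlarging the radius preserves finite multiplicity).
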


Note that (b) is equivalent to
\[
\mu(E_\delta(x)) \lesssim (1-|x|^2)^{(\alpha+n)q/p} \quad \text{for some (every) $0<\delta<1$}
\]
and (d) is equivalent to
\[
(1-|x|^2)^c \int_\mathbb{B} \frac{d\mu(y)}{[x,y]^{(\alpha+n)q/p+c}} \lesssim 1 \quad \text{for some (every) $c>0$.}
\]

\begin{proof}
   Equivalence of (a), (b) and (c) for $\alpha=0$ is proved in \cite[Theorem 3.1]{CLN2} for bounded smooth domains. The proof works equally well for other $\alpha$ too. That (b), (c) and (d) are equivalent follows from Proposition \ref{Pointwise}.
\end{proof}
By  Theorems \ref{Carleson-Besov1} and \ref{Carleson-Besov2}, the notion of $(\lambda,\alpha)$-Bergman-Carleson measures depend only on $\alpha$ and the ratio $\lambda = q/p$. We also need the following proposition.
\begin{proposition}\label{product-carleson}
Let $\mu$ be a positive Borel measure on $\mathbb{B}$. Let $0<p_{1},p_{2}<\infty$ and $-1<\alpha_{1},\alpha_{2} <\infty$ and
let
\[
\theta=\frac{1}{p_{1}}+\frac{1}{p_{2}}, \qquad \varrho=\frac{1}{\theta}\left(\frac{\alpha_{1}}{p_{1}}+\frac{\alpha_{2}}{p_{2}}\right).
\]
If  $\mu$ is a $(\theta,\varrho)$-Bergman-Carleson measure, then
\begin{equation*}
\int_{\mathbb{B}}|f(x)||g(x)| \, d\mu(x)\lesssim \|f\|_{b^{p_{1}}_{\alpha_{1}}}\|g\|_{b^{p_{2}}_{\alpha_{2}}} \qquad (f\in b^{p_{1}}_{\alpha_{1}}, g \in b^{p_{2}}_{\alpha_{2}}).
\end{equation*}
\end{proposition}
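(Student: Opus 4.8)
\emph{Sketch of proof.} The plan is to discretize the integral $\int_{\mathbb{B}}|f||g|\,d\mu$ along a covering of $\mathbb{B}$ by pseudohyperbolic balls and then reduce matters to a single application of the discrete Carleson characterizations of Theorems \ref{Carleson-Besov1} and \ref{Carleson-Besov2}. First observe that, since $\theta=\tfrac1{p_1}+\tfrac1{p_2}$ and $\varrho\theta=\tfrac{\alpha_1}{p_1}+\tfrac{\alpha_2}{p_2}$, the number $\varrho$ is a convex combination of $\alpha_1$ and $\alpha_2$, so $\varrho>-1$ and those theorems indeed apply to $(\theta,\varrho)$-Bergman-Carleson measures. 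Fix $0<\delta<1$ and let $\{a_k\}$ be the sequence furnished by Lemma \ref{ak}. Since the balls $E_\delta(a_k)$ cover $\mathbb{B}$,
\[
\int_{\mathbb{B}}|f(x)||g(x)|\,d\mu(x)\le\sum_k\Bigl(\sup_{E_\delta(a_k)}|f|\Bigr)\Bigl(\sup_{E_\delta(a_k)}|g|\Bigr)\,\mu\bigl(E_\delta(a_k)\bigr).
\]

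The next step is to bound each supremum by a local weighted $L^p$-norm. For $x\in E_\delta(a_k)$ one has $E_\delta(x)\subset E_{\delta_1}(a_k)$ for some $\delta_1=\delta_1(\delta)<1$, and $1-|x|^2\sim1-|a_k|^2$ by Lemma \ref{x-y-close}; hence the subharmonic-type estimate (\ref{equsubharmonic2}) gives
\[
\sup_{E_\delta(a_k)}|f|\lesssim(1-|a_k|^2)^{-(n+\alpha_1)/p_1}F_k,\qquad F_k:=\Bigl(\int_{E_{\delta_1}(a_k)}|f|^{p_1}\,d\nu_{\alpha_1}\Bigr)^{1/p_1},
\]
and analogously $\sup_{E_\delta(a_k)}|g|\lesssim(1-|a_k|^2)^{-(n+\alpha_2)/p_2}G_k$ with $G_k:=\bigl(\int_{E_{\delta_1}(a_k)}|g|^{p_2}\,d\nu_{\alpha_2}\bigr)^{1/p_2}$. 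Writing $\mu(E_\delta(a_k))=\widehat{\mu}_{\varrho,\delta}(a_k)\,\nu_\varrho(E_\delta(a_k))\sim\widehat{\mu}_{\varrho,\delta}(a_k)(1-|a_k|^2)^{n+\varrho}$ by (\ref{hyper-volume}) and using the identity $(n+\varrho)-\tfrac{n+\alpha_1}{p_1}-\tfrac{n+\alpha_2}{p_2}=(n+\varrho)(1-\theta)$, which is immediate from $\theta=\tfrac1{p_1}+\tfrac1{p_2}$ and $\varrho\theta=\tfrac{\alpha_1}{p_1}+\tfrac{\alpha_2}{p_2}$, one arrives at
\[
\int_{\mathbb{B}}|f||g|\,d\mu\lesssim\sum_k d_k\,F_k\,G_k,\qquad d_k:=\widehat{\mu}_{\varrho,\delta}(a_k)\,(1-|a_k|^2)^{(n+\varrho)(1-\theta)}.
\]

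Now the sequence $\{d_k\}$ is exactly the one governed by the Carleson hypothesis: if $\theta<1$, then $\{d_k\}\in\ell^{1/(1-\theta)}$ by Theorem \ref{Carleson-Besov1}(e); if $\theta\ge1$, then $d_k\lesssim1$ by Theorem \ref{Carleson-Besov2}(b), the relevant norm being $\lesssim1$ in either case (with constant depending on $\mu$). When $\theta<1$ one has $(1-\theta)+\tfrac1{p_1}+\tfrac1{p_2}=1$, so generalized H\"older for sequences gives $\sum_k d_kF_kG_k\le\|\{d_k\}\|_{\ell^{1/(1-\theta)}}\|\{F_k\}\|_{\ell^{p_1}}\|\{G_k\}\|_{\ell^{p_2}}$; when $\theta\ge1$ one has $\sum_k d_kF_kG_k\lesssim\sum_kF_kG_k$, and since $\tfrac1{p_1}+\tfrac1{p_2}=\theta\ge1$, generalized H\"older places $\{F_kG_k\}$ in $\ell^{1/\theta}\subset\ell^1$ with $\|\{F_kG_k\}\|_{\ell^1}\le\|\{F_k\}\|_{\ell^{p_1}}\|\{G_k\}\|_{\ell^{p_2}}$. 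Lastly, since $\{a_k\}$ is $\delta$-separated, the enlarged balls $E_{\delta_1}(a_k)$ still have finite overlap, so $\sum_k F_k^{p_1}=\sum_k\int_{E_{\delta_1}(a_k)}|f|^{p_1}\,d\nu_{\alpha_1}\lesssim\|f\|_{b^{p_1}_{\alpha_1}}^{p_1}$ and likewise $\sum_k G_k^{p_2}\lesssim\|g\|_{b^{p_2}_{\alpha_2}}^{p_2}$. Combining the three displays yields $\int_{\mathbb{B}}|f||g|\,d\mu\lesssim\|f\|_{b^{p_1}_{\alpha_1}}\|g\|_{b^{p_2}_{\alpha_2}}$.

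The argument is essentially routine, and the one point requiring care is the exponent bookkeeping: one must check that the power of $1-|a_k|^2$ surviving in $d_k$ is precisely the exponent $(n+\varrho)(1-\theta)$ occurring in Theorems \ref{Carleson-Besov1} and \ref{Carleson-Besov2}, and one must keep the H\"older step uniform across the cases $\theta<1$, $\theta=1$ and $\theta>1$ (the mild subtlety being that for $\theta>1$ the relevant exponents sum to more than $1$, so one first lands in $\ell^{1/\theta}$ and then uses $\ell^{1/\theta}\subset\ell^1$). One should also remember that the submean-value bound (\ref{equsubharmonic2}) is applied on balls slightly larger than those of the covering, and that this enlargement preserves the bounded-overlap property.
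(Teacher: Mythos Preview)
Your argument is correct and follows the same overall plan as the paper's proof: discretize along the covering $\{E_\delta(a_k)\}$, use a submean-value inequality to pass to local $L^p$-averages, split into the cases $\theta\ge1$ and $\theta<1$, and invoke Theorems \ref{Carleson-Besov2} and \ref{Carleson-Besov1} respectively together with a H\"older step.

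The one genuine difference is in how the submean-value estimate is deployed. The paper treats the product $fg$ as a single object: it first bounds $\|fg\|_{L^{1/\theta}_\varrho}$ by $\|f\|_{b^{p_1}_{\alpha_1}}\|g\|_{b^{p_2}_{\alpha_2}}$ via H\"older on $\mathbb{B}$, and then applies a submean-value inequality to $|fg|^{1/\theta}$ (citing an external lemma, since $fg$ is not harmonic) to control $\int|fg|\,d\mu$ by $\|fg\|_{L^{1/\theta}_\varrho}$. You instead bound $\sup_{E_\delta(a_k)}|f|$ and $\sup_{E_\delta(a_k)}|g|$ separately using only the harmonic submean estimate (\ref{equsubharmonic2}) already stated in the paper, and then run a three-term discrete H\"older on $\sum_k d_kF_kG_k$. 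Your route is slightly more self-contained (no need for a submean inequality for products of harmonic functions) and makes the exponent $(n+\varrho)(1-\theta)$ in $d_k$ match the discrete Carleson condition transparently; the paper's route has the minor advantage of packaging the continuous H\"older step once at the start rather than at the end.
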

\begin{proof}
Let $f\in b^{p_{1}}_{\alpha_{1}}, g \in b^{p_{2}}_{\alpha_{2}}$. Since $\theta p_{1}>1, \theta p_{2}>1$ and $1/\theta p_{1}+1/\theta p_{2}=1$, we can apply H\"{o}lder's inequality to obtain
\begin{align}
\|&fg\|_{L^{1/\theta}_{\varrho}}\nonumber\\
&=\left(\frac{1}{V_{\varrho}}\int_{\mathbb{B}} |f(x)g(x)|^{1/\theta}(1-|x|^{2})^{\varrho}d\nu(x) \right)^{\theta}\nonumber\\
&\lesssim \left(\int_{\mathbb{B}} |f(x)|^{p_{1}}(1-|x|^{2})^{\alpha_{1}}d\nu(x) \right)^{1/p_{1}}\left(\int_{\mathbb{B}} |g(x)|^{p_{2}}(1-|x|^{2})^{\alpha_{2}}d\nu(x) \right)^{1/p_{2}}\nonumber\\
&\lesssim \|f\|_{b^{p_{1}}_{\alpha_{1}}}\|g\|_{b^{p_{2}}_{\alpha_{2}}}. \label{ineq1}
\end{align}
Thus, $fg\in L^{1/\theta}_{\varrho}$. Let $0<\delta<1$. Because $E_{\delta/2}(x)$ is an Euclidean ball with center $c=(1-(\delta/2)^2)x/(1-(\delta/2)^2|x|^2)$ and the radius behaves like $1-|x|^{2}$ when $\delta/2$ is fixed, it follows from \cite[Lemma 3.3]{DOG} that
\begin{equation*}
 |f(c)g(c)|\lesssim  \frac{1}{(1-|x|^2)^{(n+\varrho)\theta}}\left(\int_{E_{\delta/2}(x)} |f(y)g(y)|^{1/\theta} (1-|y|^2)^{\varrho}d\nu(y)\right)^{\theta}
\end{equation*}
for all  $x\in \mathbb{B}$. Since $\delta/2$ is fixed, the distance from $x$ to the centre of $E_{\delta/2}(x)$ is at most $(\delta/2)|x|$ times the radius of $E_{\delta/2}(x)$. By \cite[Lemma 3.3]{DOG}  again, we  get
\begin{equation*}
 |f(x)g(x)|\lesssim  \frac{1}{(1-|x|^2)^{(n+\varrho)\theta}}\left(\int_{E_{\delta/2}(x)} |f(y)g(y)|^{1/\theta} (1-|y|^2)^{\varrho}d\nu(y)\right)^{\theta}
\end{equation*}
for all  $x\in \mathbb{B}$. For $a\in \mathbb{B}$ and $x \in E_{\delta/2}(a)$, we note that $E_{\delta/2}(x) \subset E_{\delta}(a)$. Let $E_{\delta/2}(a_{k})$ be the associated sets to the sequence $\{a_k\}=\{a_k(\delta/2)\}$ in  Lemma \ref{ak}. Thus we have
\begin{align*}
 &|f(x)g(x)|\\
 & \lesssim  \frac{1}{(1-|x|^2)^{(n+\varrho)\theta}}\left(\int_{E_{\delta/2}(x)} |f(y)g(y)|^{1/\theta} (1-|y|^2)^{\varrho}d\nu(y)\right)^{\theta}\\
 &\lesssim  \frac{1}{(1-|x|^2)^{(n+\varrho)\theta}}\left(\int_{E_{\delta}(a_{k})} |f(y)g(y)|^{1/\theta} (1-|y|^2)^{\varrho}d\nu(y)\right)^{\theta}, \quad x\in E_{\delta/2}(a_{k})
\end{align*}
 for $k=1,2, \dots$. Then by Lemma \ref{ak} and Lemma \ref{x-y-close}, we have
\begin{align}
&\int_{\mathbb{B}} |f(x)g(x)| \,  d\mu(x)\nonumber\\
&\lesssim \sum_{k=1}^{\infty}\int_{E_{\delta/2}(a_{k})} |f(x)g(x)| \, d\mu(x) \nonumber\\
&\lesssim \sum_{k=1}^{\infty}\left(\int_{E_{\delta}(a_{k})} |f(y)g(y)|^{1/\theta} (1-|y|^2)^{\varrho}d\nu(y)\right)^{\theta}\nonumber\\
&\times\left( \int_{E_{\delta/2}(a_{k})}\frac{d\mu(x)}{(1-|x|^2)^{(n+\varrho)\theta}}\right)\nonumber\\
&\lesssim \sum_{k=1}^{\infty} \frac{\mu(E_{\delta/2}(a_{k}))}{(1-|a_{k}|^2)^{(n+\varrho)\theta}}\left(\int_{E_{\delta}(a_{k})} |f(y)g(y)|^{1/\theta} (1-|y|^2)^{\varrho}d\nu(y)\right)^{\theta}.\label{ineq2}
\end{align}
First, assume that  $\theta\geq 1$. Since $\mu$ is a $(\theta,\varrho)$-Bergman-Carleson measure, by Theorem \ref{Carleson-Besov2} we have
\begin{equation*}
 \mu(E_{\delta/2}(a_{k}))\lesssim (1-|a_{k}|^{2})^{(n+\varrho)\theta}.
\end{equation*}
Then it follows from this together with (\ref{ineq2}) and Lemma \ref{ak} that
\begin{align}
\int_{\mathbb{B}}|f(x)g(x)| \, d\mu(x)&\lesssim \sum_{k=1}^{\infty} \left(\int_{E_{\delta}(a_{k})} |f(y)g(y)|^{1/\theta} (1-|y|^2)^{\varrho}d\nu(y)\right)^{\theta}\nonumber \\
&\lesssim  \left(\sum_{k=1}^{\infty} \int_{E_{\delta}(a_{k})} |f(y)g(y)|^{1/\theta} (1-|y|^2)^{\varrho}d\nu(y)\right)^{\theta}\nonumber \\
&\leq L \|fg\|_{L^{1/\theta}_{\varrho}}\lesssim \|fg\|_{L^{1/\theta}_{\varrho}} \label{ineq3},
\end{align}
where $L$ is the number provided by  \cite[Lemma 3]{L2}.
Next assume that $0<\theta<1$.
Then by using H\"{o}lder's inequality in (\ref{ineq2}), Lemma \ref{ak} and Lemma \ref{x-y-close}, we get
\begin{align*}
&\int_{\mathbb{B}}|f(x)g(x)| \, d\mu(x)\\
&\lesssim \sum_{k=1}^{\infty} \frac{\mu(E_{\delta/2}(a_{k}))}{(1-|a_{k}|^2)^{(n+\varrho)\theta}}\left(\int_{E_{\delta}(a_{k})} |f(y)g(y)|^{1/\theta} (1-|y|^2)^{\varrho}d\nu(y)\right)^{\theta}\nonumber \\
 &\lesssim\left\{ \sum_{k=1}^{\infty}\left[\frac{\mu(E_{\delta/2}(a_{k}))}{(1-|a_{k}|^2)^{(n+\varrho)\theta }}\right]^{1/(1-\theta)}\right\}^{(1-\theta)} \nonumber \\
 &\times\left(\sum_{k=1}^{\infty}\int_{E_{\delta}(a_{k})} |f(y)g(y)|^{1/\theta} (1-|y|^2)^{\varrho}d\nu(y)\right)^{\theta}.
\end{align*}
Since  $\mu$ is a $(\theta,\varrho)$-Bergman-Carleson measure, by Theorem \ref{Carleson-Besov1} we obtain (\ref{ineq3}) again. Combining (\ref{ineq1}) and (\ref{ineq3}) concludes the proof.
\end{proof}

\subsection{ Vanishing $(\lambda,\alpha)$-Bergman-Carleson Measures}\label{ss-Vanishing-BC}

In this subsection we will characterize vanishing $(\lambda,\alpha)$-Bergman-Carleson measures. The next proposition is about a  result concerning pointwise bounds.

\begin{proposition}\label{Pointwise2}
Suppose $\gamma \geq 0$ and $\alpha>-1$. The following are equivalent:
\begin{enumerate}
  \item[(a)] $\lim_{|x|\to 1^{-}}(1-|x|^2)^{-\gamma} \, \widehat{\mu}_{\alpha,\delta}(x) =0 $ for some (every) $0<\delta<1$.
  \item[(b)] $\lim_{|x|\to 1^{-}}(1-|x|^2)^{-\gamma} \, \bar{\mu}_{\alpha,s}(x) =0$ for some (every) $s>\gamma$.
  \item[(c)] $\lim_{|x|\to 1^{-}}(1-|x|^2)^{-\gamma} \, \widetilde{\mu}_{\alpha,t}(x)=0$ for some (every) $t>(\alpha+n+\gamma)/(\alpha+n)$.
\end{enumerate}
\end{proposition}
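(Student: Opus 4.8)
The plan is to prove the cyclic chain (a) $\Rightarrow$ (b) $\Rightarrow$ (c) $\Rightarrow$ (a), following the lines of Proposition~\ref{Pointwise} (its non‑vanishing analogue, from \cite[Proposition 3.3]{DU2}) while keeping track of boundary behaviour. As a preliminary reduction, I would first note that the validity of (a) is independent of $\delta\in(0,1)$: for $\delta_1<\delta_2$ one direction is immediate from $E_{\delta_1}(x)\subseteq E_{\delta_2}(x)$ and (\ref{hyper-volume}), and the other from the fact that $E_{\delta_2}(x)$ is covered by a number $L=L(\delta_1,\delta_2)$, independent of $x$, of balls $E_{\delta_1}(y_j)$ with $y_j\in E_{\delta_2}(x)$, so that by Lemma~\ref{x-y-close}, $(1-|x|^2)^{-\gamma}\widehat{\mu}_{\alpha,\delta_2}(x)\lesssim\sum_j(1-|y_j|^2)^{-\gamma}\widehat{\mu}_{\alpha,\delta_1}(y_j)\to0$ as $|x|\to1$. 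Once all the implications are in hand, the same reasoning shows that the ``for some / for every'' clauses in (b) and (c) are also equivalent, by chasing the chain.

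Two of the three implications come from pointwise domination among the transforms. For (b) $\Rightarrow$ (c): by (\ref{Berezin-type}), with $s=(\alpha+n)(t-1)$ one has $\widetilde{\mu}_{\alpha,t}(x)\lesssim(1-|x|^2)^{(\alpha+n)(t-1)}\int_{\mathbb{B}}[x,y]^{-(\alpha+n)t}\,d\mu(y)=\bar{\mu}_{\alpha,s}(x)$, and the hypothesis $t>(\alpha+n+\gamma)/(\alpha+n)$ is precisely $s>\gamma$; hence $(1-|x|^2)^{-\gamma}\widetilde{\mu}_{\alpha,t}(x)\to0$. For (c) $\Rightarrow$ (a): choose $\delta_0$ as in Lemma~\ref{Kernel-two-sided}, so that $R_\alpha(x,y)\sim(1-|x|^2)^{-(\alpha+n)}$ for $y\in E_{\delta_0}(x)$; restricting the defining integral of $\widetilde{\mu}_{\alpha,t}$ to $E_{\delta_0}(x)$ and using (\ref{Kernel-Norm-Sim}) and (\ref{hyper-volume}) gives $\widehat{\mu}_{\alpha,\delta_0}(x)\lesssim\widetilde{\mu}_{\alpha,t}(x)$, so the vanishing condition transfers to $\widehat{\mu}_{\alpha,\delta_0}$, and then to every $\delta$ by the preliminary reduction.

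The substantive step is (a) $\Rightarrow$ (b). Fix $s>\gamma$ and $\varepsilon>0$. Using the cover $\{E_\delta(a_k)\}$ of Lemma~\ref{ak} and Lemmas~\ref{x-y-close}, \ref{Bracket-Hyper} (so $[x,y]\sim[x,a_k]$ and $1-|y|^2\sim1-|a_k|^2$ on $E_\delta(a_k)$),
\[
\int_{\mathbb{B}}\frac{d\mu(y)}{[x,y]^{\alpha+n+s}}\ \lesssim\ \sum_k\frac{\mu(E_\delta(a_k))}{[x,a_k]^{\alpha+n+s}}.
\]
By (a) there is $r<1$ with $\mu(E_\delta(a_k))\le\varepsilon\,(1-|a_k|^2)^{\alpha+n+\gamma}$ whenever $|a_k|>r$, while $\delta$‑separation leaves only finitely many $a_k$ in $r\mathbb{B}$. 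The corresponding ``head'' of the sum is then bounded by a constant $C_r$ independent of $x$ (since $[x,a_k]\ge1-|a_k|\ge1-r$ there), while the ``tail'' is at most
\[
\varepsilon\sum_k\frac{(1-|a_k|^2)^{\alpha+n+\gamma}}{[x,a_k]^{\alpha+n+s}}\ \lesssim\ \varepsilon\int_{\mathbb{B}}\frac{(1-|y|^2)^{\alpha+\gamma}}{[x,y]^{\alpha+n+s}}\,d\nu(y)\ \sim\ \varepsilon\,(1-|x|^2)^{\gamma-s},
\]
where I have written $(1-|a_k|^2)^{\alpha+n+\gamma}\sim\int_{E_\delta(a_k)}(1-|y|^2)^{\alpha+\gamma}\,d\nu(y)$, used the finite overlap property, and applied Lemma~\ref{Integral-[x,y]} (valid since $\alpha+\gamma>-1$ and the relevant exponent parameter $s-\gamma$ is positive). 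Multiplying by $(1-|x|^2)^s$ gives $\bar{\mu}_{\alpha,s}(x)\lesssim C_r(1-|x|^2)^s+\varepsilon(1-|x|^2)^\gamma$, hence $(1-|x|^2)^{-\gamma}\bar{\mu}_{\alpha,s}(x)\lesssim C_r(1-|x|^2)^{s-\gamma}+\varepsilon$, whose $\limsup$ as $|x|\to1$ is $\lesssim\varepsilon$ since $s>\gamma$; letting $\varepsilon\to0$ establishes (b) for this arbitrary $s>\gamma$.

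I expect the only real difficulty to be the bookkeeping in (a) $\Rightarrow$ (b): one must make sure the constant in the tail bound is genuinely independent of the cutoff $r$ (so that $\varepsilon$ can be sent to $0$ after fixing $r$), and that the finite head contributes only $O((1-|x|^2)^s)=o((1-|x|^2)^\gamma)$. Everything else is a direct transcription of the estimates in Proposition~\ref{Pointwise}.
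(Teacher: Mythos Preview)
Your argument is correct. The cyclic chain, the pointwise domination for (b)$\Rightarrow$(c), and the use of Lemma~\ref{Kernel-two-sided} for (c)$\Rightarrow$(a) match the paper exactly. The only place you diverge is in (a)$\Rightarrow$(b): the paper invokes the integral identity \cite[Eq.~(3.3)]{DU2}, which expresses $(1-|x|^2)^{-\gamma}\bar\mu_{\alpha,s}(x)$ as a weighted integral of $(1-|y|^2)^{-\gamma}\widehat\mu_{\alpha,\delta}(y)$ against the kernel $[x,y]^{-(\alpha+n+s)}$, and then appeals to \cite[Remark~3.3]{DU1} (a ready-made ``bounded symbol vanishing at the boundary $\Rightarrow$ integral transform vanishes'' statement). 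You instead discretize over the lattice $\{a_k\}$ of Lemma~\ref{ak} and carry out the head/tail split by hand, converting the tail back to an integral via finite overlap before applying Lemma~\ref{Integral-[x,y]}. The underlying mechanism is the same---split near and far from the boundary and use $s>\gamma$ to kill the compact part---but your version is self-contained and avoids the two external citations, at the cost of a slightly longer computation. Either route is fine; the paper's is terser, yours is more transparent.
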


\begin{proof} The proof is similar to the proof of \cite[Proposition 3.3]{DU2}.
To see that (a) implies (b) suppose that (a) holds for some $0<\delta<1$. By \cite[Eq. (3.3)]{DU2}

\begin{align*}
(1-|x|^2)^{-\gamma} \, \bar{\mu}_{\alpha,s}(x) & \sim (1-|x|^2)^{s-\gamma} \int_\mathbb{B} \frac{(1-|y|^2)^{-\gamma}\widehat{\mu}_{\alpha,\delta}(y)}{[x,y]^{\alpha+n+s}} \, d\nu_{\alpha+\gamma}(y).
\end{align*}
Since $(1-|y|^2)^{-\gamma}\widehat{\mu}_{\alpha,\delta}(y)$ is continuous on $\mathbb{B}$ and $\lim_{|x|\to 1^{-}}(1-|x|^2)^{-\gamma} \, \widehat{\mu}_{\alpha,\delta}(x) \\ =0 $, by \cite[ Remark 3.3]{DU1}
part (b) holds for every $s>\gamma$. That (b) implies (c) is immediate from (\ref{Berezin-type}). To see that (c) implies (a), pick $\delta_0$ as in Lemma \ref{Kernel-two-sided}.  \cite[Eq. (3.4)]{DU2} shows that (a) holds with $\delta=\delta_0$. That it holds for every $0<\delta<1$ is a consequence of Lemma 3.2 of \cite{CLN1} and Lemma \ref{x-y-close}.
\end{proof}
 Characterizations of vanishing $(\lambda,\alpha)$-Bergman-Carleson measures are also divided into two cases as whether $\lambda\geq 1$ or $0<\lambda<1$. We first consider the case  $\lambda\geq 1$.

\begin{theorem}\label{Carleson-Besov3}
  Let $0<p\leq q<\infty$, $\lambda=q/p$,  $\alpha>-1$ and $\mu\geq 0$. The following are equivalent:
  \begin{enumerate}
    \item[(a)] $\mu$ is a vanishing $(\lambda,\alpha)$-Bergman-Carleson measure.
    \item[(b)] $\lim_{|x|\to 1^{-}}(1-|x|^2)^{(\alpha+n)(1-\lambda)} \, \widehat{\mu}_{\alpha,\varepsilon}(x) =0 $ for some (every) $0<\varepsilon<1$.
     \item[(c)] $\lim_{k\to \infty}(1-|a_{k}|^{2})^{(n+\alpha)(1-\lambda)}\widehat{\mu}_{\alpha,\delta}(a_{k}) =0$ for some (every) $0<\delta<1$.
    \item[(d)] $\lim_{|x|\to 1^{-}}(1-|x|^2)^{(n+\alpha)(1-\lambda)}\widetilde{\mu}_{\alpha,t}(x)=0 $ for some (every) $t>\lambda$.
    \item[(e)] $\lim_{|x|\to 1^{-}}(1-|x|^2)^{(n+\alpha)(1-\lambda)}\bar{\mu}_{\alpha,s}(x) =0$ for some (every) $s>(\alpha+n)(\lambda-1)$.
  \end{enumerate}

\end{theorem}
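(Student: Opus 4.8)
The plan is to establish the cycle (b) $\Rightarrow$ (a) $\Rightarrow$ (b) directly, and to deduce the remaining equivalences (b) $\Leftrightarrow$ (d) $\Leftrightarrow$ (e) and (b) $\Leftrightarrow$ (c) from results already available. For the latter, set $\gamma:=(\alpha+n)(\lambda-1)$: since $\alpha>-1$ makes $\alpha+n>0$ and $\lambda\geq1$ makes $\gamma\geq0$, Proposition \ref{Pointwise2} applies with this $\gamma$, and a direct check that $(\alpha+n+\gamma)/(\alpha+n)=\lambda$ identifies its parts (a), (b), (c) with statements (b), (e), (d) here. For (b) $\Leftrightarrow$ (c): a $\delta$-separated sequence meets every Euclidean-compact subset of $\mathbb{B}$ in only finitely many points, so $|a_k|\to1$ and (b) restricts to (c); conversely any $x$ near $\partial\mathbb{B}$ lies in some $E_\delta(a_k)$ with $1-|x|^2\sim1-|a_k|^2$ (Lemma \ref{x-y-close}) and $E_\delta(x)\subset E_{\delta^{*}}(a_k)$ for a suitable $\delta^{*}<1$ (the triangle-type inequality for $\rho$), so that $(1-|x|^2)^{(\alpha+n)(1-\lambda)}\widehat{\mu}_{\alpha,\delta}(x)\lesssim(1-|a_k|^2)^{(\alpha+n)(1-\lambda)}\widehat{\mu}_{\alpha,\delta^{*}}(a_k)\to0$; the passage to an arbitrary radius is absorbed by the ``some (every)'' clause of Proposition \ref{Pointwise2}.

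For (b) $\Rightarrow$ (a), I fix $0<p\leq q$ with $q/p=\lambda$ and a sequence $\{f_k\}\subset b^p_\alpha$ with $\|f_k\|_{b^p_\alpha}\leq1$ and $f_k\to0$ uniformly on compact subsets of $\mathbb{B}$. Given $\epsilon>0$, (b) provides $r<1$ with $\mu(E_\delta(a_j))\lesssim\epsilon\,(1-|a_j|^2)^{(\alpha+n)\lambda}$ whenever $|a_j|\geq r$. Applying the subharmonic-type estimate (\ref{equsubharmonic2}) to $f_k$ over $E_\delta(x)$, raising it to the power $\lambda\geq1$, and using Lemma \ref{x-y-close} together with $E_\delta(x)\subset E_{\delta^{*}}(a_j)$ for $x\in E_\delta(a_j)$, yields
\[
\int_{E_\delta(a_j)}|f_k|^q\,d\mu\ \lesssim\ \frac{\mu(E_\delta(a_j))}{(1-|a_j|^2)^{(\alpha+n)\lambda}}\left(\int_{E_{\delta^{*}}(a_j)}|f_k(y)|^p(1-|y|^2)^\alpha\,d\nu(y)\right)^{\lambda}.
\]
Summing over $\{j:|a_j|\geq r\}$, inserting the bound on $\mu(E_\delta(a_j))$, invoking the super-additivity $\sum_jc_j^{\lambda}\leq(\sum_jc_j)^{\lambda}$ (valid for $\lambda\geq1$, $c_j\geq0$), the bounded overlap of $\{E_{\delta^{*}}(a_j)\}$ (a consequence of the $\delta$-separation of $\{a_j\}$), and $\|f_k\|_{b^p_\alpha}\leq1$, bounds this portion by a constant multiple of $\epsilon$, uniformly in $k$. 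The complementary sum over $\{j:|a_j|<r\}$ is supported in a fixed compact $K\subset\mathbb{B}$ with $\mu(K)<\infty$ (as (b) forces $\widehat{\mu}_{\alpha,\delta}$ to be finite-valued), on which $f_k\to0$ uniformly, so it tends to $0$ as $k\to\infty$. Hence $\limsup_k\int_\mathbb{B}|f_k|^q\,d\mu\lesssim\epsilon$, and $\epsilon\downarrow0$ gives (a).

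For (a) $\Rightarrow$ (b), I argue by contradiction. Let $\delta_0$ be the radius of Lemma \ref{Kernel-two-sided} for some fixed $\beta$ with $\beta+n>(\alpha+n)/p$. If (b) fails for $\delta_0$, there are $x_k\in\mathbb{B}$ with $|x_k|\to1$ and $(1-|x_k|^2)^{(\alpha+n)(1-\lambda)}\widehat{\mu}_{\alpha,\delta_0}(x_k)\geq c>0$, that is, $\mu(E_{\delta_0}(x_k))\gtrsim(1-|x_k|^2)^{(\alpha+n)\lambda}$. Put $f_k=R_\beta(x_k,\cdot)/\|R_\beta(x_k,\cdot)\|_{b^p_\alpha}$. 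By Lemma \ref{norm-kernel} (the exponent $p(\beta+n)-(\alpha+n)$ is positive), $\|f_k\|_{b^p_\alpha}=1$ and $\|R_\beta(x_k,\cdot)\|_{b^p_\alpha}\sim(1-|x_k|^2)^{-(\beta+n)+(\alpha+n)/p}\to\infty$, while $R_\beta(x_k,\cdot)$ stays bounded on each compact set, so $f_k\to0$ uniformly on compacta; and by Lemma \ref{Kernel-two-sided}, $|f_k(y)|\sim(1-|x_k|^2)^{-(\alpha+n)/p}$ for $y\in E_{\delta_0}(x_k)$. Therefore
\[
\int_\mathbb{B}|f_k|^q\,d\mu\ \geq\ \int_{E_{\delta_0}(x_k)}|f_k|^q\,d\mu\ \gtrsim\ (1-|x_k|^2)^{-(\alpha+n)\lambda}\,\mu(E_{\delta_0}(x_k))\ \gtrsim\ c,
\]
contradicting (a). Thus (b) holds for $\delta_0$, hence for every radius by Proposition \ref{Pointwise2}.

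The main obstacle in both nontrivial implications is the same: one must split $\mathbb{B}$ into a compact core, handled by the uniform-on-compacta convergence of the test sequence, and a boundary collar, handled by the vanishing hypothesis (b), and then combine the two so that the core contribution goes to $0$ in $k$ while the collar contribution is uniformly $O(\epsilon)$. In (b) $\Rightarrow$ (a) the one step genuinely tied to $\lambda\geq1$ is the super-additivity inequality $\sum c_j^\lambda\leq(\sum c_j)^\lambda$ (in the companion range $0<\lambda<1$ it is replaced by Hölder's inequality and a dual summability exponent). In (a) $\Rightarrow$ (b) the delicate point is the choice of the kernel parameter $\beta$: it must simultaneously make the normalized reproducing kernels have unit $b^p_\alpha$-norm, concentrate on $E_{\delta_0}(x_k)$ at the sharp rate $(1-|x_k|^2)^{-(\alpha+n)/p}$, and vanish on compact sets.
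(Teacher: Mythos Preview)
Your argument is correct, and it is considerably more detailed than what the paper does. The paper's own proof simply invokes \cite[Theorem 3.5]{CLN2} (the bounded smooth domain case with $\alpha=0$) for the equivalence of (a), (b), (c), (d), remarks that the same proof goes through for general $\alpha>-1$, and then appeals to Proposition~\ref{Pointwise2} for (b) $\Leftrightarrow$ (d) $\Leftrightarrow$ (e). You instead supply a direct, self-contained proof of (a) $\Leftrightarrow$ (b) via the standard core/collar splitting and normalized-kernel test functions, and give an explicit argument for (b) $\Leftrightarrow$ (c). Both approaches agree on the use of Proposition~\ref{Pointwise2} for (b), (d), (e); the difference is that the paper outsources the hard implications while you reconstruct them, which has the advantage of making the role of the hypothesis $\lambda\geq1$ transparent (it enters precisely through $\sum c_j^\lambda\leq(\sum c_j)^\lambda$). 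One small point worth making explicit: the finiteness of $\mu$ on the compact core $K$ is not literally a consequence of the limit statement (b) alone, but follows from the standing convention that positive Borel measures on $\mathbb{B}$ are locally finite; the paper relies on the same convention implicitly.
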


Note that (e) is equivalent to
\[
\lim_{|x|\to 1^{-}}(1-|x|^2)^{c}\int_{\mathbb{B}}\frac{d\mu(y)}{[x,y]^{(n+\alpha)\lambda+c}} =0 \quad \text{for some (every) $c>0$}.
\]

\begin{proof}
  Equivalence of (a), (b) and (c) and (d) for $\alpha=0$  is proved in \cite[Theorem 3.5]{CLN2} for bounded smooth domains.  The proof works equally well for other $\alpha$ too. That (b), (d) and (e) are equivalent follows from Proposition \ref{Pointwise2}.
\end{proof}
We now consider the case $0<\lambda<1$.

\begin{theorem}\label{Carleson-Besov4}
  Let $0<q<p<\infty$, $\lambda=q/p$, $\alpha>-1$ and $\mu\geq 0$. The following are equivalent:

  \begin{enumerate}
    \item[(a)] $\mu$ is a $(\lambda,\alpha)$-Bergman-Carleson measure.
    \item[(b)] $\mu$ is a vanishing $(\lambda,\alpha)$-Bergman-Carleson measure.
  \end{enumerate}

\end{theorem}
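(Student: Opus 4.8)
The plan is to derive both implications from the discrete characterization in Theorem~\ref{Carleson-Besov1}, which is available precisely because $q<p$: writing $\lambda=q/p$ and $b_j:=\widehat{\mu}_{\alpha,\delta}(a_j)(1-|a_j|^2)^{(n+\alpha)(1-\lambda)}$, the measure $\mu$ is a $(\lambda,\alpha)$-Bergman-Carleson measure if and only if $\{b_j\}\in\ell^{p/(p-q)}$, and by (\ref{hyper-volume}) one has $b_j\sim\mu(E_\delta(a_j))(1-|a_j|^2)^{-(n+\alpha)q/p}$. Since $\alpha>-1$ we may take $t=0$ in Definition~\ref{definition of the h B-B space}, so $\|g\|_{b^p_\alpha}=\|g\|_{L^p_\alpha}$ and the subharmonic-type inequality (\ref{equsubharmonic2}) and the growth bound of Lemma~\ref{growth} are at our disposal. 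The mechanism of the theorem is that, when $\lambda<1$, membership of $\{b_j\}$ in $\ell^{p/(p-q)}$ forces the tail $\sum_{|a_j|>R}b_j^{p/(p-q)}$ to tend to $0$ as $R\to1^-$, which upgrades boundedness of the embedding to its vanishing form; conversely, if $\{b_j\}\notin\ell^{p/(p-q)}$ a Rademacher-randomized sum of normalized kernels produces a sequence violating the vanishing condition.

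\emph{Proof of (a)$\Rightarrow$(b).} Fix $0<q<p<\infty$ with $q/p=\lambda$ and a sequence $\{f_k\}\subset b^p_\alpha$ with $\sup_k\|f_k\|_{b^p_\alpha}\le1$ and $f_k\to0$ uniformly on compact subsets of $\mathbb{B}$. Pick the sequence $\{a_j\}$ of Lemma~\ref{ak} with radius $\delta$ small enough that $E_\delta(x)\subset E_{2\delta}(a_j)$ whenever $x\in E_\delta(a_j)$ and that $\{E_{2\delta}(a_j)\}_j$ still has bounded overlap. Covering $\mathbb{B}$ by the $E_\delta(a_j)$, estimating $\sup_{E_\delta(a_j)}|f_k|^q$ by (\ref{equsubharmonic2}) and Lemma~\ref{x-y-close}, and using $b_j\sim\mu(E_\delta(a_j))(1-|a_j|^2)^{-(n+\alpha)q/p}$, we get
\[
\int_{\mathbb{B}}|f_k|^q\,d\mu\le\sum_j\mu(E_\delta(a_j))\sup_{E_\delta(a_j)}|f_k|^q\lesssim\sum_j b_j\left(\int_{E_{2\delta}(a_j)}|f_k|^p\,d\nu_\alpha\right)^{q/p}.
\]
For $\varepsilon>0$ split the last sum at $|a_j|=R$. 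On the tail, H\"older's inequality with exponents $p/(p-q)$ and $p/q$, the bounded overlap, and $\|f_k\|_{b^p_\alpha}\le1$ give
\[
\sum_{|a_j|>R}b_j\left(\int_{E_{2\delta}(a_j)}|f_k|^p\,d\nu_\alpha\right)^{q/p}\lesssim\left(\sum_{|a_j|>R}b_j^{p/(p-q)}\right)^{(p-q)/p},
\]
which is $<\varepsilon$ for $R$ close to $1$, uniformly in $k$, since $\{b_j\}\in\ell^{p/(p-q)}$ by Theorem~\ref{Carleson-Besov1}. On the head there are only finitely many $j$ with $|a_j|\le R$ (the $a_j$ are $\delta$-separated and $\{|x|\le R\}$ is relatively compact in the pseudohyperbolic metric), and for each such $j$ the set $\overline{E_{2\delta}(a_j)}$ is a compact subset of $\mathbb{B}$, so $\int_{E_{2\delta}(a_j)}|f_k|^p\,d\nu_\alpha\to0$ as $k\to\infty$. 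Hence $\limsup_k\int_{\mathbb{B}}|f_k|^q\,d\mu\lesssim\varepsilon$, and $\varepsilon>0$ is arbitrary.

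\emph{Proof of (b)$\Rightarrow$(a).} I would argue by contraposition. Suppose $\mu$ is not a $(\lambda,\alpha)$-Bergman-Carleson measure; by Theorem~\ref{Carleson-Besov1}, $\{b_j\}\notin\ell^{p/(p-q)}$. Group the indices into consecutive finite blocks $F_1,F_2,\dots$ with $\min F_m\to\infty$ and $S_m:=\sum_{j\in F_m}b_j^{p/(p-q)}\ge1$, and for $j\in F_m$ put $c_j:=b_j^{1/(p-q)}S_m^{-1/p}$, so $\sum_{j\in F_m}c_j^p=1$. Choose $\beta$ sufficiently large (large enough, in particular, that $p(\beta+n)>n+\alpha$ and that $\sum_j(1-|a_j|^2)^{\sigma}<\infty$ for $\sigma=[(\beta+n)-(n+\alpha)/p]p'$); then by Lemma~\ref{norm-kernel} $\|R_\beta(\cdot,a_j)\|_{b^p_\alpha}\sim(1-|a_j|^2)^{-(\beta+n)+(n+\alpha)/p}$, and Lemma~\ref{Kernel-two-sided} gives $|R_\beta(x,a_j)|/\|R_\beta(\cdot,a_j)\|_{b^p_\alpha}\sim(1-|a_j|^2)^{-(n+\alpha)/p}$ on $E_\delta(a_j)$ for a suitable fixed $\delta$ (chosen so that the $E_\delta(a_j)$ are pairwise disjoint). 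With the Rademacher functions $r_j$, consider
\[
H_m(x,\omega)=\sum_{j\in F_m}c_j\,r_j(\omega)\,\frac{R_\beta(x,a_j)}{\|R_\beta(\cdot,a_j)\|_{b^p_\alpha}},\qquad\omega\in[0,1].
\]
For every $\omega$, $\|H_m(\cdot,\omega)\|_{b^p_\alpha}=\|H_m(\cdot,\omega)\|_{L^p_\alpha}\le\big\|\sum_{j\in F_m}c_j|R_\beta(\cdot,a_j)|\,\|R_\beta(\cdot,a_j)\|_{b^p_\alpha}^{-1}\big\|_{L^p_\alpha}\lesssim\big(\sum_{j\in F_m}c_j^p\big)^{1/p}=1$, the last step by Lemma~\ref{R-alpha}, Lemma~\ref{Integral-[x,y]} and a Schur/Forelli--Rudin estimate (the subadditivity of $t\mapsto t^p$ when $0<p\le1$). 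On the other hand, Khinchine's inequality applied fibrewise in $\omega$ together with the restriction of the $\mu$-integral to the disjoint balls $E_\delta(a_j)$ gives
\[
\int_0^1\!\!\int_{\mathbb{B}}|H_m(x,\omega)|^q\,d\mu(x)\,d\omega\gtrsim\int_{\mathbb{B}}\Big(\sum_{j\in F_m}c_j^2\frac{|R_\beta(x,a_j)|^2}{\|R_\beta(\cdot,a_j)\|_{b^p_\alpha}^2}\Big)^{q/2}d\mu(x)\gtrsim\sum_{j\in F_m}c_j^q b_j=S_m^{\,1-q/p}\ge1 .
\]
Hence there is $\omega_m$ with $\int_{\mathbb{B}}|H_m(\cdot,\omega_m)|^q\,d\mu\gtrsim1$, while automatically $\|H_m(\cdot,\omega_m)\|_{b^p_\alpha}\lesssim1$; since $\min F_m\to\infty$ the points $a_j$, $j\in F_m$, eventually leave every compact subset of $\mathbb{B}$, so by Lemma~\ref{R-alpha}, the growth of the norms $\|R_\beta(\cdot,a_j)\|_{b^p_\alpha}$, and the summability of $\sum_j(1-|a_j|^2)^{\sigma}$, we get $H_m(\cdot,\omega_m)\to0$ uniformly on compact subsets of $\mathbb{B}$. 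After dividing by a fixed constant the sequence $\{H_m(\cdot,\omega_m)\}$ contradicts the vanishing hypothesis, so (b) implies (a).

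I expect the one genuine obstacle to be the deterministic norm bound $\big\|\sum_j c_j|R_\beta(\cdot,a_j)|\,\|R_\beta(\cdot,a_j)\|_{b^p_\alpha}^{-1}\big\|_{L^p_\alpha}\lesssim(\sum_j c_j^p)^{1/p}$ for $p>1$: this is the ``upper half'' of the atomic-decomposition estimates for $b^p_\alpha$ and requires a Forelli--Rudin/Schur argument (for $0<p\le1$ it follows at once from subadditivity and Lemma~\ref{Integral-[x,y]}), together with the careful matching of the lattice radius to the radius furnished by Lemma~\ref{Kernel-two-sided}; the rest is routine lattice bookkeeping. All the estimates needed for this are available from \cite{GKU2} and \cite{DOG} and the results recalled in this section; for $\alpha=0$ the theorem is contained in \cite{L1}, \cite{L2} (on the disc) and \cite{CLN2} (on smooth domains), so the argument above is essentially the transcription of those proofs to arbitrary $\alpha>-1$.
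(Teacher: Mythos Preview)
The paper does not prove this theorem at all; it simply refers to \cite{O} and \cite[Theorem~3.6]{CLN2}. Your argument is the standard Luecking-type proof (essentially the one in the references cited), and both directions are correct in substance. Two small points are worth tightening. First, in (b)$\Rightarrow$(a) you simultaneously invoke Theorem~\ref{Carleson-Besov1}(e) (which uses the covering lattice $\{a_j\}$ of Lemma~\ref{ak}) and then ask that the balls $E_\delta(a_j)$ be pairwise disjoint; these are incompatible. The fix is routine: keep the covering lattice and use the finite-overlap property (Lemma~\ref{ak}(iii)) in the lower bound, since
\[
\sum_{j\in F_m}\int_{E_\delta(a_j)}\Big(c_j^2\,\frac{|R_\beta(x,a_j)|^2}{\|R_\beta(\cdot,a_j)\|_{b^p_\alpha}^2}\Big)^{q/2}d\mu(x)
\;\le\;N\int_{\mathbb{B}}\Big(\sum_{j\in F_m}c_j^2\,\frac{|R_\beta(x,a_j)|^2}{\|R_\beta(\cdot,a_j)\|_{b^p_\alpha}^2}\Big)^{q/2}d\mu(x).
\]
Second, the deterministic norm bound you flag as the ``one genuine obstacle'' is precisely the atomic-decomposition estimate already quoted in the paper (\cite[Theorem~10.1]{GKU2} for $p\ge1$, \cite[Theorem~1.4]{DOG} for $0<p<1$): with $t=0$ and $s=\beta$ large those theorems give $\big\|\sum_j c_j R_\beta(\cdot,a_j)/\|R_\beta(\cdot,a_j)\|_{b^p_\alpha}\big\|_{b^p_\alpha}\lesssim\|\{c_j\}\|_{\ell^p}$ directly, so no separate Schur argument is needed here.
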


For a proof of the above theorem see \cite{O} and \cite[Theorem 3.6]{CLN2} for bounded smooth domains.

\section{Proof of Theorem \ref{Theorem-1} } \label{proof1}
In this section we will prove Theorem \ref{Theorem-1}. Before that we present a very useful intertwining relation for transforming certain
problems for Toeplitz operators between harmonic Bergman-Besov spaces to similar problems for classical
Toeplitz operators between weighted harmonic Bergman spaces. The holomorphic version is in \cite{AK}.
\begin{theorem}\label{intertwining relation}
We have $D^t_s ({_{s,t}}T_{\mu})= ({_{s+t}}T_{\kappa}) D^t_s $, where
\[
{_{s+t}}T_{\kappa}f(x)=\frac{V_{\alpha_{1}}}{V_{s}} \int_{\mathbb{B}} R_{s+t}(x,y) f(y) d\kappa(y)
\]
 is classical Toeplitz operator from  $b^{p_{1}}_{\alpha_{1}+p_{1}t}$ to $b^{p_{2}}_{\alpha_{2}+p_{2}t}$. Consequently,
 \[
({_{s,t}}T_{\mu})=D^{-t}_{s +t}({_{s+t}}T_{\kappa}) D^t_{s}, \qquad ({_{s+t}}T_{\kappa})=D^{t}_{s}({_{s,t}}T_{\mu})D^{-t}_{s+t}.
\]
\end{theorem}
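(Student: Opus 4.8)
The plan is to verify the intertwining relation by a direct computation on reproducing kernels, exploiting the defining property $D_s^t R_s(x,y) = R_{s+t}(x,y)$ from (\ref{**}) together with the isomorphism $D_s^t : b^p_\alpha \to b^p_{\alpha+pt}$ from Lemma \ref{Apply-Dst}. First I would recall the explicit integral form
\[
{_{s,t}}T_{\mu}f(x) = \frac{V_{\alpha_{1}}}{V_{s}} \int_{\mathbb{B}} R_s(x,y)\, I^t_s f(y)\, (1-|y|^{2})^{s-\alpha_{1}}\,d\mu(y),
\]
and note that by the definitions of $I^t_s$ and of $\kappa$ one has $I^t_s f(y)\,(1-|y|^2)^{s-\alpha_1}\,d\mu(y) = (1-|y|^2)^t D^t_s f(y)\,(1-|y|^2)^{s-\alpha_1}\,d\mu(y) = D^t_s f(y)\, d\kappa(y)$. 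Hence
\[
{_{s,t}}T_{\mu}f(x) = \frac{V_{\alpha_{1}}}{V_{s}} \int_{\mathbb{B}} R_s(x,y)\, D^t_s f(y)\, d\kappa(y).
\]
Applying $D^t_s$ in the variable $x$ and moving it under the integral sign (justified because $D^t_s$ is continuous on $h(\mathbb{B})$ in the topology of uniform convergence on compact sets, and the integrand converges suitably — for a harmonic polynomial $f$ and finite $\kappa$ everything is finite), we get $D^t_s R_s(x,y) = R_{s+t}(x,y)$ by (\ref{**}), so
\[
D^t_s\bigl({_{s,t}}T_{\mu}f\bigr)(x) = \frac{V_{\alpha_{1}}}{V_{s}} \int_{\mathbb{B}} R_{s+t}(x,y)\, D^t_s f(y)\, d\kappa(y) = \bigl({_{s+t}}T_{\kappa}\bigr)\bigl(D^t_s f\bigr)(x),
\]
which is exactly $D^t_s({_{s,t}}T_{\mu}) = ({_{s+t}}T_{\kappa})D^t_s$ on the dense set of harmonic polynomials.

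Next I would check that ${_{s+t}}T_{\kappa}$ is genuinely the \emph{classical} Toeplitz operator mapping $b^{p_1}_{\alpha_1+p_1 t}$ to $b^{p_2}_{\alpha_2+p_2 t}$. For the classical Toeplitz operator ${_{\beta}}T_{\lambda}$ on a weighted harmonic Bergman space $b^p_\beta$ with $\beta > -1$, the integral form is $\int_{\mathbb{B}} R_\beta(x,y) f(y)\, d\lambda(y)$ up to a normalizing constant; here $\beta = s+t$ plays the role of the reproducing-kernel index and the source/target weights are $\alpha_1 + p_1 t$ and $\alpha_2 + p_2 t$, which are $> -1$ by the hypotheses $\alpha_1 + p_1 t > -1$ and $\alpha_2 + p_2 t > -1$ carried over from Theorem \ref{Theorem-1}. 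The measure against which one integrates is $\kappa$, and one should note that $d\kappa(y) = (1-|y|^2)^{(s+t)-(\alpha_1+p_1t)}\,\phi\,d\nu_{\alpha_1+p_1t}$-type weighting is consistent with the general definition of Toeplitz operator with measure symbol given in the introduction, now applied with kernel index $s+t$ and source weight $\alpha_1 + p_1 t$: indeed $(s+t) + (\text{source weight correction})$ works out so that the ``$\kappa$'' of this classical operator coincides with the $\kappa$ of ${_{s,t}}T_{\mu}$. This bookkeeping with the weight exponents is the one genuinely fiddly point and should be written out carefully.

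Finally, the two displayed ``Consequently'' identities follow by composing with the two-sided inverse $D^{-t}_{s+t}$ of $D^t_s$ from (\ref{*}): from $D^t_s({_{s,t}}T_{\mu}) = ({_{s+t}}T_{\kappa})D^t_s$, left-multiply by $D^{-t}_{s+t}$ to get ${_{s,t}}T_{\mu} = D^{-t}_{s+t}({_{s+t}}T_{\kappa})D^t_s$, and right-multiply the same relation by $D^{-t}_{s+t}$ to get $({_{s+t}}T_{\kappa}) = D^t_s({_{s,t}}T_{\mu})D^{-t}_{s+t}$. The main obstacle I anticipate is not the algebra — which is essentially forced by (\ref{**}) and (\ref{*}) — but the justification of passing $D^t_s$ under the integral sign and the precise matching of normalization constants $V_{\alpha_1}/V_s$ and weight exponents so that the operator on the right really is the classical Toeplitz operator in the sense defined earlier; restricting to harmonic polynomials $f$ (dense in every $b^p_\alpha$) and to finite $\kappa$ keeps all integrals absolutely convergent and lets the continuity of $D^t_s$ on $h(\mathbb{B})$ do the work.
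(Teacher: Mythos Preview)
Your proposal is correct and follows essentially the same route as the paper's proof: rewrite $I^t_s f(y)(1-|y|^2)^{s-\alpha_1}d\mu(y)=D^t_s f(y)\,d\kappa(y)$, differentiate under the integral sign using (\ref{**}) to obtain the intertwining relation, and then invoke (\ref{*}) for the two consequences. The paper's argument is in fact more terse than yours---it does not pause over the ``classical'' bookkeeping or the justification of differentiation under the integral---so your extra care is welcome but not a departure in method.
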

\begin{proof}
By differentiation under the integral sign and (\ref{**}), we have
\begin{align*}
D^t_s ({_{s,t}}T_{\mu}f)(x)&=\frac{V_{\alpha_{1}}}{V_{s}} \int_{\mathbb{B}} R_{s+t}(x,y) D^t_{s}f(y) d\kappa(y)\\
&=({_{s+t}}T_{\kappa}) (D^t_sf)(x) \qquad (f\in b^{p_{1}}_{\alpha_{1}}).
\end{align*}
For the second and third assertions, we note that $(D^t_{s})^{-1}=D^{-t}_{s +t}$ by (\ref{*}).
\end{proof}

By Theorem \ref{intertwining relation}, ${_{s,t}}T_{\mu}$ is bounded from $b^{p_{1}}_{\alpha_{1}}$ to $b^{p_{2}}_{\alpha_{2}}$ if and only if  ${_{s+t}}T_{\kappa}$ is bounded from $b^{p_{1}}_{\alpha_{1}+p_{1}t}$ to $b^{p_{2}}_{\alpha_{2}+p_{2}t}$. With all the preparation done in earlier sections, now we are ready to prove the main result.

\subsection{ (i) Implies (ii).}\label{ss-implies1}
We divide this part into two cases: $\zeta\geq1$ and $0<\zeta<1$.

\textbf{Case $1$:} $\zeta\geq1$. Notice that this condition is equivalent to $0<p_{1}\leq p_{2}<\infty$. Fix $x\in \mathbb{B}$ and consider $R_{s+t}(x,.)$. Then under the condition $p_{1}(n+s+t)>n+\alpha_{1}+p_{1}t$ provided by (\ref{Equ-1}), it is easy to check using Lemma \ref{norm-kernel} that $R_{s+t}(x,.) \in b^{p_{1}}_{\alpha_{1}+p_{1}t}$ with
\[
\|R_{s+t}(x,.)\|^{p_{1}}_{\alpha_{1}+p_{1}t}\lesssim (1-|x|^{2})^{(n+\alpha_{1})-p_{1}(n+s)}.
\]
Take $\delta=\delta_{0}$ where $\delta_{0}$ is the number provided by Lemma \ref{Kernel-two-sided}. By Lemma \ref{x-y-close} and Lemma \ref{Kernel-two-sided}, we have
\begin{align*}
\mu(E_{\delta}(x))&\lesssim \frac{V_{\alpha_{1}}}{V_{s}} (1-|x|^{2})^{2n+s+t+\alpha_{1}}\int_{E_{\delta}(x)} |R_{s+t}(x,y)|^{2} (1-|y|^{2})^{s+t-\alpha_{1}} d\mu(y)\\
&\lesssim \frac{V_{\alpha_{1}}}{V_{s}} (1-|x|^{2})^{2n+s+t+\alpha_{1}}\int_{\mathbb{B}} |R_{s+t}(x,y)|^{2} (1-|y|^{2})^{s+t-\alpha_{1}} d\mu(y)\\
&=(1-|x|^{2})^{2n+s+t+\alpha_{1}}{_{s+t}}T_{\kappa}[R_{s+t}(x,.)](x),
\end{align*}
and therefore
\begin{align*}
\widehat{\kappa}_{\gamma,\delta}(x)&=\frac{(1-|x|^{2})^{s+t-\alpha_{1}}\mu(E_\delta(x))}{\nu_\gamma(E_\delta(x))}\\
&\lesssim (1-|x|^{2})^{2(n+s+t)-(n+\gamma)}{_{s+t}}T_{\kappa}[R_{s+t}(x,.)](x).
\end{align*}

On the other hand, by Lemma \ref{growth} together with the boundedness of the Toeplitz operator ${_{s+t}}T_{\kappa}$ and an inequality above, we get

\begin{align*}
{_{s+t}}T_{\kappa}[R_{s+t}(x,.)](x)&= |{_{s+t}}T_{\kappa}[R_{s+t}(x,.)](x)|\\
&\lesssim (1-|x|^{2})^{-t-\frac{n+\alpha_{2}}{p_{2}}}\|{_{s+t}}T_{\kappa}[R_{s+t}(x,.)]\|_{b^{p_{2}}_{\alpha_{2}+p_{2}t}}\\
&\lesssim (1-|x|^{2})^{-t-\frac{n+\alpha_{2}}{p_{2}}}\|{_{s+t}}T_{\kappa}\|\|R_{s+t}(x,.)\|_{b^{p_{1}}_{\alpha_{1}+p_{1}t}}\\
&\lesssim (1-|x|^{2})^{\frac{n+\alpha_{1}}{p_{1}}-\frac{n+\alpha_{2}}{p_{2}}-(n+s+t)}\|{_{s+t}}T_{\kappa}\|,
\end{align*}
where $\|{_{s+t}}T_{\kappa}\|$ denote the operator norm of ${_{s+t}}T_{\kappa}:b^{p_{1}}_{\alpha_{1}+p_{1}t}\to b^{p_{2}}_{\alpha_{2}+p_{2}t}$.
Combining these estimates we have
\begin{align*}
\widehat{\kappa}_{\gamma,\delta}(x)&\lesssim (1-|x|^{2})^{s+t+\frac{n+\alpha_{1}}{p_{1}}-\frac{n+\alpha_{2}}{p_{2}}-\gamma}\|{_{s+t}}T_{\kappa}\|\\
&= (1-|x|^{2})^{\zeta\gamma-\gamma+n(\frac{1}{p_{1}}-\frac{1}{p_{2}})}\|{_{s+t}}T_{\kappa}\|=(1-|x|^{2})^{(n+\gamma)(\zeta-1)}
\|{_{s+t}}T_{\kappa}\|.
\end{align*}
By Theorem \ref{Carleson-Besov2} this means that $\kappa$ is a $(\zeta,\gamma)$-Bergman-Carleson measure.

\textbf{Case $2$:} $0<\zeta<1$. Notice that this condition is equivalent to $0<p_{2}< p_{1}<\infty$. Let $ r_k(\tau)$ be a sequence of Rademacher functions and $\{a_{k}\}$ be any sequence satisfying the conditions of the Lemma \ref{ak}. Since
\begin{equation*}
n+s+1>n\max\left(1,\frac{1}{p_{1}}\right)+\frac{1+\alpha_{1}}{p_{1}},
\end{equation*}
we know from  \cite[Theorem 10.1]{GKU2} when $1\leq p < \infty$ and  \cite[Theorem 1.4]{DOG} when $0< p < 1$ that, for any sequence of real numbers $\{\lambda_{m}\}\in \ell^{p_{1}}$, the function
\begin{equation*}
f_{\tau}(x)=\sum_{k=1}^{\infty} \lambda_{k}r_k(\tau)(1-|a_k|^{2})^{n+s-(n+\alpha_{1})/p_{1}} R_{s+t}(x,a_k)
\end{equation*}
is in $b^{p_{1}}_{\alpha_{1}+p_{1}t}$ with $ \|f_{\tau}||_{b^{p_{1}}_{\alpha_{1}+p_{1}t}}\|\leq \|{\lambda_{k}}\|_{\ell^{p_{1}}} $
for almost every $\tau \in (0,1)$. Let
\begin{equation*}
f_{k}(x)=(1-|a_k|^{2})^{n+s-(n+\alpha_{1})/p_{1}} R_{s+t}(x,a_k).
\end{equation*}
Since ${_{s+t}}T_{\kappa}$ is bounded from $b^{p_{1}}_{\alpha_{1}+p_{1}t}$ to $b^{p_{2}}_{\alpha_{2}+p_{2}t}$, we get that for almost every $\tau \in (0,1)$
\begin{align*}
\|{_{s+t}}T_{\kappa}f_{\tau}\|^{p_{2}}_{b^{p_{2}}_{\alpha_{2}+p_{2}t}}&=\int_{\mathbb{B}} \left|\sum_{k=1}^{\infty} \lambda_{k}r_k(\tau) {_{s+t}}T_{\kappa}f_{k} (x)\right|^{p_{2}}d\nu_{\alpha_{2}+p_{2}t}(x)\\
&\lesssim \|{_{s+t}}T_{\kappa}\|^{p_{2}}\cdot \|f_{\tau}\|^{p_{2}}_{b^{p_{1}}_{\alpha_{1}+p_{1}t}}\lesssim \|{_{s+t}}T_{\kappa}\|^{p_{2}}\cdot \|\lambda_{k}\|^{p_{2}}_{\ell^{p_{1}}}.
\end{align*}
Integrating both sides with respect to $\tau$ from $0$ to $1$, to obtain
\begin{equation*}
\int_{0}^{1}\int_{\mathbb{B}} \left|\sum_{k=1}^{\infty} \lambda_{k}r_k(\tau) {_{s+t}}T_{\kappa}f_{k} (x)\right|^{p_{2}}d\nu_{\alpha_{2}+p_{2}t}(x)d\tau \lesssim \|{_{s+t}}T_{\kappa}\|^{p_{2}}\cdot \|\lambda_{k}\|^{p_{2}}_{\ell^{p_{1}}}.
\end{equation*}
Applying Fubini's theorem shows
\begin{equation*}
\int_{\mathbb{B}}\int_{0}^{1} \left|\sum_{k=1}^{\infty} \lambda_{k}r_k(\tau) {_{s+t}}T_{\kappa}f_{k} (x)\right|^{p_{2}}d\tau d\nu_{\alpha_{2}+p_{2}t}(x) \lesssim \|{_{s+t}}T_{\kappa}\|^{p_{2}}\cdot \|\lambda_{k}\|^{p_{2}}_{\ell^{p_{1}}}.
\end{equation*}
 To use Khinchine's inequality we first check that  $\left\{\lambda_{k} ({_{s+t}}T_{\kappa})f_{k}(x)\right\}$ is in $\ell^{2}$. Replacing $\lambda_{k} $ with $\lambda_{k}r_k(\tau)$ and using ${_{s+t}}T_{\kappa}$ is bounded from $b^{p_{1}}_{\alpha_{1}+p_{1}t}$ to $b^{p_{2}}_{\alpha_{2}+p_{2}t}$, we get that for almost every $\tau \in (0,1)$
\begin{align*}
\left(\sum_{k=1}^{\infty}\left|\lambda_{k}r_k(\tau){_{s+t}}T_{\kappa}f_{k} (x)\right|^{2}\right)^{\frac{1}{2}}&\lesssim \|{_{s+t}}T_{\kappa}\|\cdot \|f_{\tau}\|_{b^{p_{1}}_{\alpha_{1}+p_{1}t}}\\
&\lesssim \|{_{s+t}}T_{\kappa}\|\cdot \|\lambda_{k}\|_{\ell^{p_{1}}}.
\end{align*}
We now apply Khinchine's inequality and deduce
\begin{equation}\label{usingKhin}
\int_{\mathbb{B}}\left(\sum_{k=1}^{\infty}\left|\lambda_{k}\right|^{2} \left|{_{s+t}}T_{\kappa}f_{k} (x)\right|^{2}\right)^{\frac{p_{2}}{2}} d\nu_{\alpha_{2}+p_{2}t}(x) \lesssim \|{_{s+t}}T_{\kappa}\|^{p_{2}}\cdot \|\lambda_{k}\|^{p_{2}}_{\ell^{p_{1}}}.
\end{equation}
Let ${E_{\delta}(a_{k})}$ be the associated sets to the sequence $\{a_{k}\}$ in Lemma \ref{ak}. Then we have
\begin{align*}
\sum_{k=1}^{\infty}&\left|\lambda_{k}\right|^{p_{2}}\int_{E_{\delta}(a_{k})} \left|{_{s+t}}T_{\kappa}f_{k} (x)\right|^{p_{2}}d\nu_{\alpha_{2}+p_{2}t}(x) \\
&= \int_{\mathbb{B}}\left(\sum_{k=1}^{\infty}\left|\lambda_{k}\right|^{p_{2}} \left|{_{s+t}}T_{\kappa}f_{k} (x)\right|^{p_{2}}\chi_{E_{\delta}(a_{k})}(x) \right)d\nu_{\alpha_{2}+p_{2}t}(x).
\end{align*}
If $p_{2}\geq 2$, then $\frac{2}{p_{2}}\leq 1$, and from the fact that $\ell^{2/p_{2}}$ injects continuously into $\ell^{1}$ we have
\begin{align*}
\sum_{k=1}^{\infty}&\left|\lambda_{k}\right|^{p_{2}}\int_{E_{\delta}(a_{k})} \left|{_{s+t}}T_{\kappa}f_{k} (x)\right|^{p_{2}}d\nu_{\alpha_{2}+p_{2}t}(x) \\
&\leq \int_{\mathbb{B}}\left(\sum_{k=1}^{\infty}\left|\lambda_{k}\right|^{2} \left|{_{s+t}}T_{\kappa}f_{k} (x)\right|^{2}\chi_{E_{\delta}(a_{k})} (x) \right)^{p_{2}/2}d\nu_{\alpha_{2}+p_{2}t}(x)\\
&\leq \int_{\mathbb{B}}\left(\sum_{k=1}^{\infty}\left|\lambda_{k}\right|^{2} \left|{_{s+t}}T_{\kappa}f_{k} (x)\right|^{2} \right)^{p_{2}/2}d\nu_{\alpha_{2}+p_{2}t}(x).
\end{align*}
If $0<p_{2}< 2$, then $\frac{2}{p_{2}}>1$. Thus by H\"{o}lder's inequality we get
\begin{align*}
&\sum_{k=1}^{\infty}\left|\lambda_{k}\right|^{p_{2}}\int_{E_{\delta}(a_{k})} \left|{_{s+t}}T_{\kappa}f_{k} (x)\right|^{p_{2}}d\nu_{\alpha_{2}+p_{2}t}(x) \\
&\leq \int_{\mathbb{B}}\left(\sum_{k=1}^{\infty}\left|\lambda_{k}\right|^{2} \left|{_{s+t}}T_{\kappa}f_{k} (x)\right|^{2}  \right)^{p_{2}/2}\\
&\times\left( \sum_{k=1}^{\infty} \chi_{E_{\delta}(a_{k})} (x)\right)^{1-p_{2}/2}d\nu_{\alpha_{2}+p_{2}t}(x)\\
&\leq N^{1-p_{2}/2}\int_{\mathbb{B}}\left(\sum_{k=1}^{\infty}\left|\lambda_{k}\right|^{2} \left|{_{s+t}}T_{\kappa}f_{k} (x)\right|^{2} \right)^{p_{2}/2}d\nu_{\alpha_{2}+p_{2}t}(x)
\end{align*}
since any point $x$ belongs to at most $N$ of the sets ${E_{\delta}(a_{k})}$ by Lemma \ref{ak} (iii).  Combining the last two
inequalities and applying (\ref{usingKhin}), we obtain
\begin{align*}
\sum_{k=1}^{\infty}&\left|\lambda_{k}\right|^{p_{2}}\int_{E_{\delta}(a_{k})} \left|{_{s+t}}T_{\kappa}f_{k} (x)\right|^{p_{2}}d\nu_{\alpha_{2}+p_{2}t}(x) \\
&\leq \max\{1,N^{1-p_{2}/2}\}\int_{\mathbb{B}}\left(\sum_{k=1}^{\infty}\left|\lambda_{k}\right|^{2} \left|{_{s+t}}T_{\kappa}f_{k} (x)\right|^{2} \right)^{p_{2}/2}d\nu_{\alpha_{2}+p_{2}t}(x)\\
&\lesssim \|{_{s+t}}T_{\kappa}\|^{p_{2}}\cdot \|\lambda_{k}\|^{p_{2}}_{\ell^{p_{1}}}.
\end{align*}
Since by subharmonicity (\ref{equsubharmonic2}) we have
\begin{equation*}
 |{_{s+t}}T_{\kappa}f_{k} (a_{k})|^{p_{2}}\lesssim  \frac{1}{(1-|a_{k}|^2)^{n+\alpha_{2}+p_{2}t}}\int_{E_{\delta}(a_{k})} |{_{s+t}}T_{\kappa}f_{k} (x)|^{p_{2}} d\nu_{\alpha_{2}+p_{2}t}(x),
\end{equation*}
which yields
\begin{equation}\label{*up}
\sum_{k=1}^{\infty}\left|\lambda_{k}\right|^{p_{2}} (1-|a_{k}|^2)^{n+\alpha_{2}+p_{2}t}|{_{s+t}}T_{\kappa}f_{k} (a_{k})|^{p_{2}}\lesssim  \|{_{s+t}}T_{\kappa}\|^{p_{2}}\cdot \|\lambda_{k}\|^{p_{2}}_{\ell^{p_{1}}}.
\end{equation}
Now, notice that
\begin{equation*}
 {_{s+t}}T_{\kappa}f_{k} (a_{k})= (1-|a_{k}|^2)^{n+s-\frac{n+\alpha_{1}}{p_{1}}}\int_{\mathbb{B}} |R_{s+t}(y,a_k)|^{2}(1-|y|^2)^{s+t-\alpha} d\mu(y).
\end{equation*}
Therefore, proceeding as in the case $\zeta\geq1$, we obtain
\begin{equation*}
\frac{\kappa(E_{\delta}(x))}{(1-|a_{k}|^2)^{n+s+2t+\frac{n+\alpha_{1}}{p_{1}}}}\lesssim  {_{s+t}}T_{\kappa}f_{k} (a_{k}).
\end{equation*}
Putting this into (\ref{*up}) above, we get
\begin{equation*}
\sum_{k=1}^{\infty}\left|\lambda_{k}\right|^{p_{2}} \left(\frac{\kappa(E_{\delta}(x))}{(1-|a_{k}|^2)^{\eta}}\right)^{p_{2}}\lesssim  \|{_{s+t}}T_{\kappa}\|^{p_{2}}\cdot \|\lambda_{k}\|^{p_{2}}_{\ell^{p_{1}}}
\end{equation*}
with
\begin{equation}\label{eta-equ}
\eta=n+s+t+\frac{n+\alpha_{1}}{p_{1}}-\frac{n+\alpha_{2}}{p_{2}}=(n+\gamma)\zeta.
\end{equation}
Since the conjugate exponent of $(p_{1}/p_{2})$ is $(p_{1}/p_{2})'=p_{1}/(p_{1}-p_{2})$,
by duality we know that
\begin{equation*}
\{\vartheta_{k}\}:=\left\{\left(\frac{\kappa(E_{\delta}(a_{k}))}{(1-|a_{k}|^2)^{\eta}}\right)^{p_{2}}\right\}\in \ell^{p_{1}/(p_{1}-p_{2})}
\end{equation*}
with
\begin{equation*}
\|\{\vartheta_{k}\}\|_{\ell^{p_{1}/(p_{1}-p_{2})}}\lesssim  \|{_{s+t}}T_{\kappa}\|^{p_{2}}
\end{equation*}
or
\begin{equation*}
\{\kappa_{k}\}:=\left\{\frac{\kappa(E_{\delta}(x))}{(1-|a_{k}|^2)^{(n+\gamma)\zeta}}\right\}\in \ell^{p_{1}p_{2}/(p_{1}-p_{2})}=\ell^{1/(1-\zeta)}
\end{equation*}
with
\begin{equation*}
\|\{\kappa_{k}\}\|_{\ell^{1/(1-\zeta)}}=\|\{\vartheta_{k}\}\|^{1/p_{2}}_{\ell^{p_{1}/(p_{1}-p_{2})}}\lesssim  \|{_{s+t}}T_{\kappa}\|.
\end{equation*}
By Theorem \ref{Carleson-Besov1} this means that $\kappa$ is a $(\zeta,\gamma)$-Bergman-Carleson measure.

\subsection{ (ii) Implies (i).}\label{ss-implies2}
Now suppose $(ii)$ holds, that is, $\kappa$ is a $(\zeta,\gamma)$-Bergman-Carleson measure. We divide the proof into three
cases.

\textbf{Case $1$:} Let $p_{2}>1$. For this case, let $p'_{2}$ and $\alpha'_{2}$ be two numbers satisfying
\begin{equation}\label{firstss-implies2}
\frac{1}{p_{2}}+\frac{1}{p'_{2}}=1; \qquad \frac{\alpha_{2}}{p_{2}}+\frac{\alpha'_{2}}{p'_{2}}=s.
\end{equation}
Then
\begin{equation*}
\alpha'_{2}=\left(s-\frac{\alpha_{2}}{p_{2}}\right)p'_{2}=\frac{sp_{2}-\alpha_{2}}{p_{2}-1}>-1
\end{equation*}
since $s>(1+\alpha_{2})/p_{2}-1$. By Theorem \ref{Theorem-Dual-of-bpalpha}, we know that the dual of $b^{p_{2}}_{\alpha_{2}+p_{2}t}$ can be identified with $b^{p'_{2}}_{\alpha'_{2}}$ under the pairing
\begin{equation*}
[f,g]_{b^2_{s+t}}=\int_{\mathbb{B}}f(x)\overline{g(x)} \, d\nu_{s+t}(x).
\end{equation*}

Let $f\in b^{p_{1}}_{\alpha_{1}+p_{1}t}$ and $h\in b^{p'_{2}}_{\alpha'_{2}}$. By using Fubini’s theorem and
the reproducing formula (1.5) of \cite{GKU2}, since  $\alpha'_{2}>-1$ and $\alpha'_{2}+1<p'_{2}(s+t+1)$ by the $\alpha_{2}+p_{2}t>-1$, we get that
\begin{align*}
[h,{_{s+t}}T_{\kappa}f]_{b^2_{s+t}}& =\frac{V_{\alpha_{1}}}{V_{s}} \int_{\mathbb{B}}h(y)\int_{\mathbb{B}} R_{s+t}(x,y)\overline{f(x)}d\kappa(x) \, d\nu_{s+t}(y)\\
&=\frac{V_{\alpha_{1}}}{V_{s}} \int_{\mathbb{B}}\left(\int_{\mathbb{B}} R_{s+t}(x,y) h(y) d\nu_{s+t}(y)\right) \overline{f(x)}\, d\kappa(x)\\
&=\frac{V_{\alpha_{1}}}{V_{s}} \int_{\mathbb{B}}h(x)\overline{f(x)} \, d\kappa(x).
\end{align*}
The conditions for $\zeta$ and $\gamma$ in the theorem are equivalent to
\[
\lambda=\frac{1}{p_{1}}+\frac{1}{p'_{2}}, \qquad \gamma=\frac{1}{\lambda}\left(t+\frac{\alpha_{1}}{p_{1}}+\frac{\alpha'_{2}}{p'_{2}}\right).
\]
Thus, by  Proposition \ref{product-carleson},
\begin{equation*}
|[h,{_{s+t}}T_{\kappa}f]_{b^2_{s+t}}|\lesssim \int_{\mathbb{B}}|h(x)||f(x)| \, d\kappa(x)\lesssim \|f\|_{b^{p_{1}}_{\alpha_{1}+p_{1}t}}\|h\|_{b^{p'_{2}}_{\alpha'_{2}}}.
\end{equation*}
Hence ${_{s+t}}T_{\kappa}$ is bounded from $b^{p_{1}}_{\alpha_{1}+p_{1}t}$ to $b^{p_{2}}_{\alpha_{2}+p_{2}t}$.

\textbf{Case $2$:}  $p_{2}=1$. Let $f \in b^{p_{1}}_{\alpha_{1}+p_{1}t}$. For this case, since $s>\alpha_{2}$, by Fubini's theorem and Lemma \ref{norm-kernel} we have
\begin{align}
\|{_{s+t}}T_{\kappa}f\|_{b^{1}_{\alpha_{2}+t}}&\leq \int_{\mathbb{B}}\left(\int_{\mathbb{B}}|R_{s+t}(x,y)|  |f(y)|  d\kappa(y)\right) d\nu_{\alpha_{2}+t}(x)\nonumber\\
&=\int_{\mathbb{B}} |f(y)|  \left(\int_{\mathbb{B}}|R_{s+t}(x,y)| (1-|x|^{2})^{\alpha_{2}+t} d\nu(x) \right)d\kappa(y)\nonumber\\
 &\lesssim \int_{\mathbb{B}} |f(y)|  (1-|y|^{2})^{\alpha_{2}-s}d\kappa(y).\label{case2}
\end{align}
Let $\vartheta$ be the measure defined by $d\vartheta(y)=(1-|y|^{2})^{\alpha_{2}-s}d\kappa(y)$. Since $\kappa$ is a $(\zeta,\gamma)$-Bergman-Carleson measure, we have $\widehat{\kappa}_{\gamma,\varepsilon} \in L^{1/(1-1/p_{1})}_\gamma$ for every $0<\varepsilon<1$ by Theorem \ref{Carleson-Besov1} and $\widehat{\kappa}_{\gamma,\delta} \lesssim (1-|x|^2)^{(\gamma+n)(1/p_{1}-1)}$ for every $0<\delta<1$ by Theorem \ref{Carleson-Besov2}, which are equivalent to $\widehat{\vartheta}_{\alpha_{1}+p_{1}t,\varepsilon} \in L^{1/(1-1/p_{1})}_{\alpha_{1}+p_{1}t}$ for every $0<\varepsilon<1$  and $\widehat{\vartheta}_{\alpha_{1}+p_{1}t,\delta} \lesssim (1-|x|^2)^{(\alpha_{1}+p_{1}t+n)(1/p_{1}-1)}$ for every $0<\delta<1$, respectively. Then, by Theorems \ref{Carleson-Besov1} and \ref{Carleson-Besov2}, $\vartheta$ is a $(1/p_{1},\alpha_{1}+p_{1}t)$-Bergman-Carleson measure. Thus, for any $f\in b^{p_{1}}_{\alpha_{1}+p_{1}t}$, we have
\begin{equation*}
\int_{\mathbb{B}}|f(y)| \, d\vartheta(y)\lesssim \|f\|_{b^{p_{1}}_{\alpha_{1}+p_{1}t}}.
\end{equation*}
Thus, by (\ref{case2})  it follows that $\|{_{s+t}}T_{\kappa}f\|_{b^{1}_{\alpha_{2}+t}}\lesssim \|f\|_{b^{p_{1}}_{\alpha_{1}+p_{1}t}}$ and so
${_{s+t}}T_{\kappa}$ is bounded from $b^{p_{1}}_{\alpha_{1}+p_{1}t}$ to $b^{p_{2}}_{\alpha_{2}+p_{2}t}$.

\textbf{Case $3$:}  $0<p_{2}<1$. Let $\{a_{k}\}$ be a sequence satisfying the conditions of Lemma \ref{ak}.
Then by Lemma \ref{Inequality} and \ref{R-alpha}  we have
\begin{align*}
|{_{s+t}}T_{\kappa}f(x)|&\lesssim \sum_{k=1}^{\infty}\int_{E_{\delta}(a_{k})} |R_{s+t}(x,y)| |f(y)| d\kappa(y)\\
&\lesssim \sum_{k=1}^{\infty}\int_{E_{\delta}(a_{k})} \frac{1}{[x,y]^{n+s+t}} |f(y)| d\kappa(y)\\
&\lesssim \sum_{k=1}^{\infty} \frac{1}{[x,a_{k}]^{n+s+t}}\int_{E_{\delta}(a_{k})} |f(y)| d\kappa(y)
\end{align*}
Now, by the subharmonicity in (\ref{equsubharmonic2}), for $y \in E_{\delta}(a_{k})$, we have
\begin{equation*}
 |f(y)|^{p_{1}}\lesssim  \frac{1}{(1-|a_{k}|^2)^{n+\alpha_{1}+p_{1}t}}\int_{E_{\delta}(a_{k})} |f (z)|^{p_{1}} d\nu_{\alpha_{1}+p_{1}t}(z).
\end{equation*}
From this we get
\begin{align*}
 \int_{E_{\delta}(a_{k})}&|f(y)|d\kappa(y)\\
& \lesssim  \frac{\kappa(E_{\delta}(a_{k}))}{(1-|a_{k}|^2)^{(n+\alpha_{1}+p_{1}t)/p_{1}}}\left(\int_{E_{\delta}(a_{k})} |f (z)|^{p_{1}} d\nu_{\alpha_{1}+p_{1}t}(z)\right)^{1/p_{1}}.
\end{align*}
Since $0<p_{2}<1$, this implies
\begin{align*}
|{_{s+t}}T_{\kappa}f(x)|^{p_{2}}\lesssim &\sum_{k=1}^{\infty} \frac{1}{[x,a_{k}]^{(n+s+t)p_{2}}}\frac{\kappa(E_{\delta}(a_{k}))^{p_{2}}}{(1-|a_{k}|^2)^{(n+\alpha_{1}+p_{1}t)p_{2}/p_{1}}}\\
&\times \left(\int_{E_{\delta}(a_{k})} |f (z)|^{p_{1}} d\nu_{\alpha_{1}+p_{1}t}(z)\right)^{p_{2}/p_{1}}.
\end{align*}
Therefore, since $(n+s)p_{2}>n+\alpha_{2}$, we can apply Lemma \ref{Integral-[x,y]} to obtain
\begin{align}
\|{_{s+t}}T_{\kappa}&f\|^{p_{2}}_{b^{p_{2}}_{\alpha_{2}+p_{2}t}} \nonumber\\
\lesssim &\sum_{k=1}^{\infty} \frac{\kappa(E_{\delta}(a_{k}))^{p_{2}}}{(1-|a_{k}|^2)^{(n+\alpha_{1}+p_{1}t)p_{2}/p_{1}}}\left(\int_{E_{\delta}(a_{k})} |f (z)|^{p_{1}} d\nu_{\alpha_{1}+p_{1}t}(z)\right)^{p_{2}/p_{1}}\nonumber\\
&\times \int_{\mathbb{B}}\frac{(1-|x|^{2})^{\alpha_{2}+p_{2}t}}{[x,a_{k}]^{(n+s+t)p_{2}}}d\nu(x)\nonumber\\
\lesssim &\sum_{k=1}^{\infty} \frac{\kappa(E_{\delta}(a_{k}))^{p_{2}}}{(1-|a_{k}|^2)^{(n+\alpha_{1}+p_{1}t)p_{2}/p_{1}}}\left(\int_{E_{\delta}(a_{k})} |f (z)|^{p_{1}} d\nu_{\alpha_{1}+p_{1}t}(z)\right)^{p_{2}/p_{1}}\nonumber\\
&\times (1-|a_{k}|^{2})^{n+\alpha_{2}-(n+s)p_{2}}. \label{together}
\end{align}

First, assume that  $\zeta\geq 1$. Since $\kappa$ is a $(\zeta,\gamma)$-Bergman-Carleson measure, by Theorem \ref{Carleson-Besov2} we get
\begin{equation*}
 \kappa(E_{\delta}(a_{k}))\lesssim (1-|a_{k}|^{2})^{(n+\gamma)\zeta}.
\end{equation*}
Bearing in mind (\ref{eta-equ}), this, together with (\ref{together}) and the fact that $p_{2}\geq p_{1}$ (due to the assumption $\zeta\geq 1$), yields
\begin{align*}
\|{_{s+t}}T_{\kappa}f\|^{p_{2}}_{b^{p_{2}}_{\alpha_{2}+p_{2}t}}&\lesssim \sum_{k=1}^{\infty} \left(\int_{E_{\delta}(a_{k})} |f (z)|^{p_{1}} d\nu_{\alpha_{1}+p_{1}t}(z)\right)^{p_{2}/p_{1}}\\
&\lesssim  \left(\sum_{k=1}^{\infty}\int_{E_{\delta}(a_{k})} |f (z)|^{p_{1}} d\nu_{\alpha_{1}+p_{1}t}(z)\right)^{p_{2}/p_{1}}\lesssim
\|f\|^{p_{2}}_{b^{p_{1}}_{\alpha_{1}+p_{1}t}}.
\end{align*}
Hence, ${_{s+t}}T_{\kappa}$ is bounded from $b^{p_{1}}_{\alpha_{1}+p_{1}t}$ to $b^{p_{2}}_{\alpha_{2}+p_{2}t}$.

 Next assume that $0<\zeta<1$.
Then $p_{1}>p_{2}$, and using H\"{o}lder's inequality in (\ref{together}), we get
\begin{align*}
\|{_{s+t}}T_{\kappa}&f\|^{p_{2}}_{b^{p_{2}}_{\alpha_{2}+p_{2}t}}\\
&\lesssim \sum_{k=1}^{\infty} \frac{\kappa(E_{\delta}(a_{k}))^{p_{2}}}{(1-|a_{k}|^2)^{(n+\gamma)\zeta p_{2}}}\left(\int_{E_{\delta}(a_{k})} |f (z)|^{p_{1}} d\nu_{\alpha_{1}+p_{1}t}(z)\right)^{p_{2}/p_{1}}\\
&\leq  \left\{\sum_{k=1}^{\infty}\left[ \frac{\kappa(E_{\delta}(a_{k}))^{p_{2}}}{(1-|a_{k}|^2)^{(n+\gamma)\zeta p_{2}}}\right]^{p_{1}/(p_{1}-p_{2})}\right\}^{1-p_{2}/p_{1}}\\
&\times\left(\sum_{k=1}^{\infty}\int_{E_{\delta}(a_{k})} |f (z)|^{p_{1}} d\nu_{\alpha_{1}+p_{1}t}(z)\right)^{p_{2}/p_{1}}.
\end{align*}
Since  $\kappa$ is a $(\zeta,\gamma)$-Bergman-Carleson measure, by Theorem \ref{Carleson-Besov1} we get that
\begin{align*}
\|{_{s+t}}T_{\kappa}f\|^{p_{2}}_{b^{p_{2}}_{\alpha_{2}+p_{2}t}}\lesssim &\left\{\sum_{k=1}^{\infty}\left[ \frac{\kappa(E_{\delta}(a_{k}))}{(1-|a_{k}|^2)^{(n+\gamma)\zeta }}\right]^{1/(1-\zeta)}\right\}^{(1-\zeta)p_{2}}\\
&\times\left(\sum_{k=1}^{\infty}\int_{E_{\delta}(a_{k})} |f (z)|^{p_{1}} d\nu_{\alpha_{1}+p_{1}t}(z)\right)^{p_{2}/p_{1}}\\
\lesssim &\left(\sum_{k=1}^{\infty}\int_{E_{\delta}(a_{k})} |f (z)|^{p_{1}} d\nu_{\alpha_{1}+p_{1}t}(z)\right)^{p_{2}/p_{1}}\lesssim
\|f\|^{p_{2}}_{b^{p_{1}}_{\alpha_{1}+p_{1}t}}.
\end{align*}
Hence, ${_{s+t}}T_{\kappa}$ is bounded from $b^{p_{1}}_{\alpha_{1}+p_{1}t}$ to $b^{p_{2}}_{\alpha_{2}+p_{2}t}$. The proof is complete.

\section{Proof of Theorem \ref{Theorem-2} } \label{proof2}

In this section we will prove Theorem \ref{Theorem-2}. Note again that by Theorem \ref{intertwining relation}, ${_{s,t}}T_{\mu}$ is compact from $b^{p_{1}}_{\alpha_{1}}$ to $b^{p_{2}}_{\alpha_{2}}$ if and only if  ${_{s+t}}T_{\kappa}$ is compact from $b^{p_{1}}_{\alpha_{1}+p_{1}t}$ to $b^{p_{2}}_{\alpha_{2}+p_{2}t}$.

\subsection{(i) Implies (ii).}\label{impliesii}
We divide this part into two cases as $0<\zeta<1$ and $\zeta\geq1$.

\textbf{Case $1$:} $0<\zeta<1$. In this case  a vanishing $(\zeta,\gamma)$-Bergman-Carleson measure is the same as a  $(\zeta,\gamma)$-Bergman-Carleson measures by Theorem \ref{Carleson-Besov4}. If ${_{s+t}}T_{\kappa}$ is compact, then it is bounded and by Theorem \ref{Theorem-1} we get the desired result.

\textbf{Case $2$:} $\zeta\geq1$. Since ${_{s+t}}T_{\kappa}$ is compact, then $\|{_{s+t}}T_{\kappa}f_{k}\|_{b^{p_{2}}_{\alpha_{2}+p_{2}t}}\to 0$ for any bounded sequence $\{f_{k}\}$ in $b^{p_{1}}_{\alpha_{1}+p_{1}t}$ converging to zero uniformly on compact subsets of $\mathbb{B}$. Let $\{a_{k}\}\subset \mathbb{B} $ with $|a_{k}|\to 1^{-}$ and consider the functions
\begin{equation*}
 f_{k}(x)= (1-|a_{k}|^{2})^{(n+s)-(n+\alpha_{1})/p_{1}}R_{s+t}(x,a_{k}).
\end{equation*}
Due to the conditions on $s$ and Lemma \ref{norm-kernel}, we have $\sup_{k}\|f_{k}\|_{b^{p_{1}}_{\alpha_{1}+p_{1}t}} <\infty$, and it is obvious that $f_{k}$ converges to zero uniformly on compact subsets of $\mathbb{B}$. Hence $\|{_{s+t}}T_{\kappa}f_{k}\|_{b^{p_{2}}_{\alpha_{2}+p_{2}t}}\to 0$. Therefore, proceeding as in the case $\zeta\geq1$ of that (i) Implies (ii) in Theorem \ref{Theorem-2}, for any $\delta>0$, we get
\begin{align*}
&\frac{\widehat{\kappa}_{\gamma,\delta}(a_{k})}{(1-|a_{k}|^{2})^{(n+\gamma)(\zeta-1)}}\\
&\lesssim (1-|a_{k}|^{2})^{(n+\gamma)(1-\zeta)+2(n+s+t)-(n+s)+(n+\alpha_{1})/p_{1}}{_{s+t}}T_{\kappa}f_{k}(a_{k})\\
&= (1-|a_{k}|^{2})^{t+(n+\alpha_{2})/p_{2}}{_{s+t}}T_{\kappa}f_{k}(a_{k})\\
&\lesssim \|{_{s+t}}T_{\kappa}f_{k}\|_{b^{p_{2}}_{\alpha_{2}+p_{2}t}}\to 0.
\end{align*}
Thus, by Theorem \ref{Carleson-Besov3}, the measure $\kappa$ be a vanishing $(\zeta,\delta)$-Bergman-Carleson measure.
\subsection{ (ii) Implies (i).}\label{ss-impliesi}
Now suppose $(ii)$ holds, that is, $\kappa$ is a vanishing $(\zeta,\gamma)$-Bergman-Carleson measure. We divide the proof into two
cases.

\textbf{Case $1$:}  $0<\zeta<1$. Since (ii) holds, by Theorem \ref{Theorem-1}, ${_{s+t}}T_{\kappa}$ is bounded. Also since  $0<\zeta<1$, we have $0<p_{2}<p_{1}<\infty$. Due to the space $b^{p}_{\alpha}$ is isomorphic to $\ell^{p}$ by \cite[Theorem 1.4]{DOG} for $0<p<1$ and \cite[Theorem 10.1]{GKU2} for $1\leq p<\infty$ , the result is a consequence of a general result of Banach space theory: it is known that, for $0<p_{2}<p_{1}<\infty$, every bounded operator from $\ell^{p_{1}}$ to $\ell^{p_{2}}$ is compact (see \cite[Theorem I.2.7, p. 31]{LT}).

\textbf{Case $2$:} $\zeta\geq1$. To prove that the operator ${_{s+t}}T_{\kappa}$ is compact, we must show that $\|{_{s+t}}T_{\kappa}f_{k}\|_{b^{p_{2}}_{\alpha_{2}+p_{2}t}}\to 0$  for any bounded sequence $\{f_{k}\}$ in $b^{p_{1}}_{\alpha_{1}+p_{1}t}$ converging to zero uniformly on compact subsets of $\mathbb{B}$. If $p_{2}>1$, then just like in the proof of Theorem \ref{Theorem-2}, by duality and Lemma
\ref{growth} we have (the numbers $p'_{2}$ and $\alpha'_{2}$ are the ones defined by (\ref{firstss-implies2})
\begin{align*}
\|{_{s+t}}T_{\kappa}f_{k}\|_{b^{p_{2}}_{\alpha_{2}+p_{2}t}}&\lesssim \sup_{\|h\|_{b^{p'_{2}}_{\alpha'_{2}}}\leq 1}|[h,{_{s+t}}T_{\kappa}f_{k}]_{b^{2}_{s+t}}|\\
&\leq \sup_{\|h\|_{b^{p'_{2}}_{\alpha'_{2}}}\leq 1}\int_{\mathbb{B}} |h(x)||f_{k}(x)| d\kappa(x)\\
&\lesssim \sup_{\|h\|_{b^{p'_{2}}_{\alpha'_{2}}}\leq 1}\|h\|_{b^{p'_{2}}_{\alpha'_{2}}}\int_{\mathbb{B}} |f_{k}(x)|(1-|x|^{2})^{-(n+\alpha'_{2})/p'_{2}} d\kappa(x)\\
&\lesssim \int_{\mathbb{B}} |f_{k}(x)|(1-|x|^{2})^{-(n+\alpha'_{2})/p'_{2}} d\kappa(x)
\end{align*}
Let $\vartheta$ be the measure defined by $d\vartheta(x)=(1-|x|^{2})^{-(n+\alpha'_{2})/p'_{2}} d\kappa(x)$. Since $\kappa$ is a vanishing $(\zeta,\gamma)$-Bergman-Carleson measure, using Theorem \ref{Carleson-Besov3}, we can easily see that $\vartheta$ is a vanishing $(1/p_{1},\alpha_{1}+p_{1}t)$-Bergman-Carleson measure. Thus, $\|{_{s+t}}T_{\kappa}f_{k}\|_{b^{p_{2}}_{\alpha_{2}+p_{2}t}}\to 0$.

If $0<p_{2}\leq1$, from the estimates obtained in the proof of that (ii) implies (i) in Theorem \ref{Theorem-2} (see (\ref{together})) it follows that, for any sequence $\{a_{j}\}$  satisfying the conditions of the Lemma \ref{ak}, we have
\begin{align}\label{Together2}
&\|{_{s+t}}T_{\kappa}f_{k}\|^{p_{2}}_{b^{p_{2}}_{\alpha_{2}+p_{2}t}}\nonumber \\
&\lesssim \sum_{j=1}^{\infty} \left(\frac{\kappa(E_{\delta}(a_{j}))}{(1-|a_{j}|^2)^{(n+\gamma)\zeta}}\right)^{p_{2}}\left(\int_{E_{\delta}(a_{j})} |f_{k} (x)|^{p_{1}} d\nu_{\alpha_{1}+p_{1}t}(x)\right)^{p_{2}/p_{1}}
\end{align}
Let $\epsilon>0$. Since $\kappa$ is a vanishing $(\zeta,\gamma)$-Bergman-Carleson measure, due to Theorem \ref{Carleson-Besov3}, there is $0<\delta_{0}<1$ such that
\begin{equation}\label{supaj}
 \sup_{|a_{j}|>\delta_{0}}\frac{\kappa(E_{\delta}(a_{j}))}{(1-|a_{j}|^2)^{(n+\gamma)\zeta}}<\epsilon.
\end{equation}
Split the sum appearing in (\ref{Together2}) in two parts: one over the points $a_{j}$ with $|a_{j}| \leq \delta_{0}$
and the other over the points with $|a_{j}| >\delta_{0}$. Since $\{f_{k}\}$ converges to zero uniformly
on compact subsets of $\mathbb{B}$, it is clear that the sum over the points $a_{j}$  with
$|a_{j}| \leq \delta_{0}$ (a finite sum) goes to zero as $k$ goes to infinity. On the other hand, by
(\ref{supaj}) and since $p_{2} \geq p_{1}$ (because $\zeta \geq 1$), we have
\begin{align*}
&\sum_{j:|a_{j}| >\delta_{0}}^{\infty} \left(\frac{\kappa(E_{\delta}(a_{j}))}{(1-|a_{j}|^2)^{(n+\gamma)\zeta}}\right)^{p_{2}}\left(\int_{E_{\delta}(a_{j})} |f_{k} (x)|^{p_{1}} d\nu_{\alpha_{1}+p_{1}t}(x)\right)^{p_{2}/p_{1}}\\
&\leq \epsilon^{p_{2}}\sum_{j:|a_{j}| >\delta_{0}}^{\infty} \left(\int_{E_{\delta}(a_{j})} |f_{k} (x)|^{p_{1}} d\nu_{\alpha_{1}+p_{1}t}(x)\right)^{p_{2}/p_{1}}\leq \epsilon^{p_{2}} \|f_{k}\|^{p_{2}}_{b^{p_{1}}_{\alpha_{1}+p_{1}t}}\lesssim \epsilon^{p_{2}}.
\end{align*}
Thus, $\|{_{s+t}}T_{\kappa}f_{k}\|_{b^{p_{2}}_{\alpha_{2}+p_{2}t}}\to 0$, finishing the proof.
\section{Positive Schatten Class Toeplitz Operators }\label{Positive S-C Toeplitz Op}

In this section we will prove Theorem \ref{Theorem-3}. We first briefly review the notion of Schatten class operators. If $T$ is a compact operator on a separable Hilbert space $H$ with inner product $[\cdot,\cdot]_{H}$, then there exist a non-increasing sequence $\{S_{m}(T)\}$, called the singular value sequence, and orthonormal vectors $\{e_{m}\} $ and $\{f_{m}\} $ in $H$ such that
\begin{equation*}
Tx= \sum_{m=0}^{\infty} S_{m}(T)[x,e_{m}]_{H} f_{m}
\end{equation*}
for $x\in H$. For $1\leq p<\infty$, the space of all Schatten p-class operators $S_{p}(H)$ is defined to be set of all compact operators $ T$ for which $\|T\|_{S_{p}}=\left(\sum_{m=0}^{\infty}S_{m}(T)^{p}\right)^{1/p}<\infty$. As is well known, $S_{p}(H)$ is a Banach space with the norm $\|.\|_{S_{p}}$ and is a two-sided ideal in the space of bounded linear operators on $H$. If $ T \in S_{1}(H)$ and $\{e_{m}\} $  is an orthonormal basis for $H$, then
\begin{equation*}
tr(T)= \sum_{m=0}^{\infty}[Te_{m},e_{m}]_{H},
\end{equation*}
where the series is convergent and independent of $\{e_{m}\}$. The sum above is called the trace of $T$. If $T \in  S_{p}(H)$ and $T\geq 0$, we have  $\|T\|_{S_{p}}=\left[tr(T^{p})\right]^{1/p}$ for $1\leq p<\infty$. If $T$ is a positive compact operator on $H$, then $T^{p} $ is uniquely defined, and $T \in  S_{p}(H)$ if and only if $T^{p} \in  S_{1}(H)$. See \cite{Z1}, for example, for more information and related facts.

In the rest of the paper we consider the case of a Hilbert space $H=b^{2}_{\alpha}$ for any $\alpha \in \mathbb{R}$. Given an $\alpha$, we select $s$ so as to satisfy (\ref{two}) with $p=2$, and put
\begin{equation}\label{s-alpha}
u=s-\alpha \quad \textit{and}  \quad \Phi=2s-\alpha=s+u=\alpha+2u>-1
\end{equation}
in the remaining part of the paper.  Actually it comes from self-adjointness of the operator
\[
\Lambda_{s,t} f(x) := (1-|x|^2)^{t} \int_\mathbb{B} R_{s+t}(x,y) f(y) (1-|y|^2)^t \, d\nu(y) \qquad (f\in b^2_\alpha)
\]
and the adjoint $\Lambda^{*}_{s,t}: L^{2}_{\alpha}\to L^{2}_{\alpha}$ is $\Lambda^{*}_{s,t}=\Lambda_{\alpha+t,-\alpha+s}$ whenever the operator is bounded. See \cite[Lemma 8.1]{GKU2}. So we use this notation in order to have Toeplitz operators that are direct extensions of the classical Bergman space Toeplitz operators and to have exact equalities as much as possible. See \cite[Section 3]{AK} for detail explanations in the holomorphic setting.

The norm in Definition \ref{definition of the h B-B space} yield explicit equivalent forms for the inner product of $b^{2}_{\alpha}$ as
\begin{equation}\label{inner-Pairing}
{_{t}}[f,g ]_{b^{2}_{\alpha}} =[I^{t}_{s}f,I^{t}_{s}g ]_{L^{2}_{\alpha}}=\int_{\mathbb{B}} I^{t}_{s}f(x) \, \overline{I^{t}_{s}g(x)} \ d\nu_{\alpha}(x), \qquad (f,g\in b^2_\alpha)
\end{equation}
with $t$ satisfying (\ref{alpha+pt}) with $p=2$. Also we use only the inner product $[\cdot,\cdot]_{b^{2}_{\alpha}}={_{u}}[\cdot,\cdot]_{b^{2}_{\alpha}}$. If $\alpha>-1$, it is standart to use $u=0$.

Given an $\alpha$, pick an $s$ satisfying  (\ref{two}) with $p=2$, recall that $\Phi>-1$, let $y\in \mathbb{B}$, and put
\begin{equation*}
{_{\alpha}}g_{y}(x)=\frac{R_{s}(x,y)}{\|R_{s}(\cdot,y)\|_{b^{2}_{\alpha}}} \qquad (x \in \mathbb{B}).
\end{equation*}
Since $2(n+s)-(n+\alpha)>0$, by Lemma \ref{norm-kernel}, ${_{\alpha}}g_{y}(x)\sim (1-|y|^{2})^{(n+\Phi)/2}R_{s}(x,y)$. Obviously $\|{_{\alpha}}g_{y}\|_{b^2_\alpha}=1$ for all $y\in \mathbb{B}$. Thus ${_{\alpha}}g_{y}$ is essentially a normalized
reproducing kernel; but although the kernel $R_{s}(x,y)$ is that of $b^2_s$, the normalization is
done with respect to the norm of $b^2_\alpha$, and  is considered an element of $b^2_\alpha$. It is interesting that
\begin{equation}\label{define-alphaky}
D_{s}^{u}({_{\alpha}}g_{y})(x)=\frac{R_{\Phi}(x,y)}{\|R_{\Phi}(\cdot,y)\|_{b^{2}_{\Phi}}}:={_{\Phi}}k_{y}(x),
\end{equation}
which defines ${_{\Phi}}k_{y}(x)\in b^{2}_{\Phi}$. More interesting is the fact that this family of functions in $b^2_\alpha$ spaces is the link between the bounded Toeplitz operator ${_{s,u}}T_{\mu}$ and the Berezin transforms of $\mu$. Under the conditions (\ref{s-alpha}),  $d\kappa$ takes the form
\[
d\kappa(y)=(1-|y|^{2})^{2u}d\mu(y)=(1-|y|^{2})^{\Phi-\alpha}d\mu(y),
\]
and we have
\begin{equation}\label{<stmuf,f>}
[{_{s,u}}T_{\mu}f,f]_{b^{2}_{\alpha}} =\int_{\mathbb{B}} |D_{s}^{u}f|^{2}d\kappa \qquad (f\in b^2_\alpha)
\end{equation}
which can be seen by formally exchanging the order of integrations after representing ${_{s,u}}T_{\mu}f$ as an integral, differentiation under the integral sign, and then applying the reproducing property. By (\ref{define-alphaky}) and (\ref{<stmuf,f>}), we obtain
\begin{equation}\label{<talphagy,f>}
[{_{s,u}}T_{\mu}({_{\alpha}}g_{y}),{_{\alpha}}g_{y}]_{b^{2}_{\alpha}} =\int_{\mathbb{B}} |D_{s}^{u}{_{\alpha}}g_{y}|^{2}d\kappa =\int_{\mathbb{B}} |{_{\Phi}}k_{y}|^{2}d\kappa=\widetilde{\mu}_{\Phi,\alpha,2}
\end{equation}
for those $\mu$ for which the integral converges.

We need a few lemmas before we characterize the Toeplitz operators with positive symbols that are in Schatten classes $S_{p}=S_{p}(b^{2}_{\alpha})$ for $1\leq p < \infty$.
\begin{lemma}\label{trt-berezin}
If $T: b^{2}_{\alpha}\to b^{2}_{\alpha}$ is in $S_{1}$ or positive, then
\begin{align*}
tr(T)=tr(D_{s}^{u}TD_{\Phi}^{-u})&=\int_{\mathbb{B}}[D_{s}^{u}TD_{\Phi}^{-u}R_{\Phi}(\cdot,y),R_{\Phi}(\cdot,y)]_{b^{2}_{\Phi}}d\nu_{\Phi}(y)\\
&\sim\int_{\mathbb{B}}[D_{s}^{u}TD_{\Phi}^{-u}{_{\Phi}}k_{y},{_{\Phi}}k_{y}]_{b^{2}_{\Phi}}d\nu_{-n}\\
&=\int_{\mathbb{B}}[T{_{\alpha}}g_{y},{_{\alpha}}g_{y}]_{b^{2}_{\alpha}}d\nu_{-n}
\end{align*}
\end{lemma}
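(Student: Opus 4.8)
The plan is to transport the problem to the reproducing kernel Hilbert space $b^2_\Phi$, apply a continuous (integral) form of the trace formula there, and then pull everything back to $b^2_\alpha$ via the operator $D^u_s$. First I would record that $D^u_s\colon b^2_\alpha\to b^2_\Phi$ is an isomorphism with inverse $D^{-u}_\Phi$: this is Lemma~\ref{Apply-Dst} with $p=2$, $t=u$ (note $\alpha+2u=\Phi$), together with (\ref{*}) and the relation $s+u=\Phi$ from (\ref{s-alpha}). Reading (\ref{inner-Pairing}) with $t=u$ and unwinding the weights gives $\|D^u_s f\|_{b^2_\Phi}^2=(V_\alpha/V_\Phi)\|f\|_{b^2_\alpha}^2$, so $D^u_s$ is a positive scalar multiple of a unitary $U\colon b^2_\alpha\to b^2_\Phi$ and $\widetilde T:=D^u_s T D^{-u}_\Phi=UTU^*$. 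Hence $\widetilde T$ is unitarily equivalent to $T$; in particular it is positive (resp.\ in $S_1$) exactly when $T$ is, and $tr(\widetilde T)=tr(T)$ in $[0,\infty]$ (resp.\ in $\mathbb C$). This yields the first equality of the statement.

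Next I would establish the continuous trace formula on $b^2_\Phi$: for any $S\ge 0$ or $S\in S_1$ on $b^2_\Phi$,
\[
tr(S)=\int_{\mathbb B}[SR_\Phi(\cdot,y),R_\Phi(\cdot,y)]_{b^2_\Phi}\,d\nu_\Phi(y).
\]
Fixing an orthonormal basis $\{e_m\}$ of $b^2_\Phi$, one has $R_\Phi(\cdot,y)=\sum_m\overline{e_m(y)}\,e_m$ in $b^2_\Phi$ since $\sum_m|e_m(y)|^2=R_\Phi(y,y)<\infty$. For $S\ge0$, writing $S=B^2$ with $B=S^{1/2}\ge0$ and using the reproducing property of $b^2_\Phi$ together with the reality of $R_\Phi$, one gets $[SR_\Phi(\cdot,y),R_\Phi(\cdot,y)]_{b^2_\Phi}=\|BR_\Phi(\cdot,y)\|^2_{b^2_\Phi}=\sum_m|(Be_m)(y)|^2$, and Tonelli's theorem gives $\int_{\mathbb B}\sum_m|(Be_m)(y)|^2\,d\nu_\Phi(y)=\sum_m\|Be_m\|^2_{b^2_\Phi}=\sum_m[Se_m,e_m]_{b^2_\Phi}=tr(S)$. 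The $S_1$ case follows by decomposing $S$ into a complex combination of four positive trace-class operators. Applying this with $S=\widetilde T$ produces the second line of the lemma.

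Then I would pass to normalized kernels and return to $b^2_\alpha$. Since $\Phi>-1$, Lemma~\ref{norm-kernel} with kernel index $\Phi$, $p=2$, weight $\Phi$ (so $c=n+\Phi>0$) gives $\|R_\Phi(\cdot,y)\|_{b^2_\Phi}^2\sim(1-|y|^2)^{-(n+\Phi)}$; writing $R_\Phi(\cdot,y)=\|R_\Phi(\cdot,y)\|_{b^2_\Phi}\,{_\Phi}k_y$ and using $d\nu_\Phi(y)=V_\Phi^{-1}(1-|y|^2)^\Phi d\nu(y)$ and $V_{-n}=1$ yields $\|R_\Phi(\cdot,y)\|_{b^2_\Phi}^2\,d\nu_\Phi(y)\sim d\nu_{-n}(y)$. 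Since $\widetilde T\ge0$ when $T\ge0$, the integrand $[\widetilde T\,{_\Phi}k_y,{_\Phi}k_y]_{b^2_\Phi}$ is nonnegative, so this comparison of measures survives integration and gives the $\sim$ in the statement (for $T\in S_1$ one keeps the exact identity against the measure $\|R_\Phi(\cdot,y)\|_{b^2_\Phi}^2\,d\nu_\Phi(y)$, which is comparable to $d\nu_{-n}$). Finally, (\ref{define-alphaky}) gives ${_\Phi}k_y=D^u_s({_\alpha}g_y)$, hence $D^{-u}_\Phi\,{_\Phi}k_y={_\alpha}g_y$ and $[\widetilde T\,{_\Phi}k_y,{_\Phi}k_y]_{b^2_\Phi}=[D^u_s(T\,{_\alpha}g_y),D^u_s({_\alpha}g_y)]_{b^2_\Phi}=(V_\alpha/V_\Phi)[T\,{_\alpha}g_y,{_\alpha}g_y]_{b^2_\alpha}$ by the polarized isometry relation recorded at the outset; integrating against $d\nu_{-n}$ closes the chain.

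The main obstacle is the continuous trace formula: one must carefully justify that $R_\Phi(\cdot,y)=\sum_m\overline{e_m(y)}e_m$ converges in the norm of $b^2_\Phi$, that $y\mapsto[SR_\Phi(\cdot,y),R_\Phi(\cdot,y)]_{b^2_\Phi}$ is measurable, and that summation and integration may be interchanged; the factorization $S=S^{1/2}\,S^{1/2}$ and Tonelli's theorem handle the positive case cleanly, and the $S_1$ case then follows by linearity after splitting $S$ into four positive trace-class pieces. The remaining work is only bookkeeping of the constants $V_\alpha$, $V_\Phi$, $V_{-n}=1$ and of the $\sim$ supplied by Lemma~\ref{norm-kernel}, all of which are absorbed into the $\sim$ appearing in the statement.
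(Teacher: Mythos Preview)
Your proposal is correct and follows essentially the same route as the paper: transfer to $b^2_\Phi$ via $D^u_s$ (the paper does this by pushing an orthonormal basis through $D^u_s$ rather than explicitly invoking the scalar-multiple-of-unitary observation), apply a continuous trace formula with the reproducing kernel of $b^2_\Phi$, use Lemma~\ref{norm-kernel} for the $\sim$, and unwind via (\ref{define-alphaky}) and (\ref{inner-Pairing}). The only substantive difference is that where the paper cites \cite[Theorem~6.4 and Corollary~6.5]{Z1} for the integral trace identity, you supply a direct argument using $R_\Phi(\cdot,y)=\sum_m\overline{e_m(y)}e_m$, the factorization $S=S^{1/2}S^{1/2}$, Parseval together with the reproducing property to get $\|S^{1/2}R_\Phi(\cdot,y)\|^2=\sum_m|(S^{1/2}e_m)(y)|^2$, and Tonelli; this is exactly the computation underlying Zhu's result, so the two proofs coincide.
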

where $D_{s}^{u}TD_{\Phi}^{-u}$ is the operator on $b^{2}_{\Phi}$.

\begin{proof}
Let $\{ e_{q} : q\in \mathbb{N}^{n} \}$ be an orthonormal basis for $b^{2}_{\alpha}$ with respect to the
inner product $[\cdot,\cdot]_{b^{2}_{\alpha}}$. Put $f_{q}=D_{s}^{u}e_{q}$. Then $\{ f_{q} : q\in \mathbb{N}^{n} \}$ is an orthonormal
basis for $b^{2}_{\Phi}$ with respect to the inner product $[\cdot,\cdot]_{b^{2}_{\Phi}}$ by Lemma \ref{Apply-Dst}.
Then
\begin{align*}
tr(T)=\sum_{q}[Te_{q},e_{q}]_{b^{2}_{\alpha}}=\sum_{q}[D_{s}^{u}Te_{q},D_{s}^{u}e_{q}]_{b^{2}_{\Phi}}
=\sum_{q}[(D_{s}^{u}TD_{\Phi}^{-u})f_{q},f_{q}]_{b^{2}_{\Phi}},
\end{align*}
which proves the first equality. The second equality follows by   modifying the proof
of Theorem 6.4 and Corollary 6.5 of \cite{Z1} for the ball and for weighted harmonic Bergman spaces. The inequality follows from Lemma \ref{norm-kernel}. Finally the last equality follows from (\ref{inner-Pairing}) and (\ref{define-alphaky}).
\end{proof}

\begin{lemma}
We have
\begin{equation*}
tr({_{s,u}}T_{\mu})\sim\int_{\mathbb{B}} \widetilde{\mu}_{\Phi,\alpha,2} d\nu_{-n}.
\end{equation*}
\end{lemma}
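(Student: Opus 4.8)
The plan is to obtain the identity by feeding the positive operator $T={_{s,u}}T_{\mu}$ into Lemma \ref{trt-berezin} and then recognizing the resulting integrand through the form identity (\ref{<talphagy,f>}).

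First I would observe that ${_{s,u}}T_{\mu}$ is a positive operator on $b^{2}_{\alpha}$: by (\ref{<stmuf,f>}),
\[
[{_{s,u}}T_{\mu}f,f]_{b^{2}_{\alpha}}=\int_{\mathbb{B}}|D_{s}^{u}f|^{2}\,d\kappa\ge 0 \qquad (f\in b^{2}_{\alpha}),
\]
since $\mu\ge0$ forces $\kappa\ge0$. Hence the trace $tr({_{s,u}}T_{\mu})=\sum_{q}[{_{s,u}}T_{\mu}e_{q},e_{q}]_{b^{2}_{\alpha}}$ is a well-defined element of $[0,\infty]$, independent of the orthonormal basis $\{e_q\}$ of $b^{2}_{\alpha}$; choosing $\{e_q\}$ to consist of (orthonormalized) harmonic polynomials keeps each $e_q$ in the domain of ${_{s,u}}T_{\mu}$, so every term $[{_{s,u}}T_{\mu}e_{q},e_{q}]_{b^{2}_{\alpha}}=\int_{\mathbb{B}}|D_{s}^{u}e_{q}|^{2}\,d\kappa$ makes sense.

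Next, because ${_{s,u}}T_{\mu}$ is positive, Lemma \ref{trt-berezin} applies with $T={_{s,u}}T_{\mu}$ and gives
\[
tr({_{s,u}}T_{\mu})\sim\int_{\mathbb{B}}[{_{s,u}}T_{\mu}({_{\alpha}}g_{y}),{_{\alpha}}g_{y}]_{b^{2}_{\alpha}}\,d\nu_{-n}(y).
\]
Then I would insert (\ref{<talphagy,f>}), which identifies the integrand as the $(\Phi,\alpha,2)$-Berezin transform,
\[
[{_{s,u}}T_{\mu}({_{\alpha}}g_{y}),{_{\alpha}}g_{y}]_{b^{2}_{\alpha}}=\int_{\mathbb{B}}|{_{\Phi}}k_{y}|^{2}\,d\kappa=\widetilde{\mu}_{\Phi,\alpha,2}(y),
\]
and substituting yields exactly $tr({_{s,u}}T_{\mu})\sim\int_{\mathbb{B}}\widetilde{\mu}_{\Phi,\alpha,2}\,d\nu_{-n}$.

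The only delicate point is the legitimacy of the interchanges hidden in (\ref{<stmuf,f>}), in (\ref{<talphagy,f>}), and in the proof of Lemma \ref{trt-berezin} (the reproducing-property computation, differentiation under the integral sign, and the Fubini swaps). Since $\mu\ge0$ and ${_{s,u}}T_{\mu}\ge0$, every integrand and every term of every series involved is nonnegative, so Tonelli's theorem justifies all the exchanges and the identity is valid in $[0,\infty]$ with no finiteness hypothesis whatsoever; in particular the two sides are finite simultaneously. This is essentially the only obstacle, and it is handled entirely by positivity.
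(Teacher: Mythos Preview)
Your proof is correct and follows essentially the same approach as the paper: apply Lemma \ref{trt-berezin} to the positive operator ${_{s,u}}T_{\mu}$ and then identify the integrand via (\ref{<talphagy,f>}). Your additional remarks on positivity and Tonelli are reasonable elaborations, but the paper's own proof simply cites these two ingredients in one line.
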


\begin{proof}
The proof follows from Lemma \ref{trt-berezin} and (\ref{<talphagy,f>}).
\end{proof}

\begin{lemma}\label{s,uTphiinsp}
If $1\leq p<\infty$ and $\phi\in L^{p}_{-n}$ then ${_{s,u}}T_{\phi}\in S^{p}$.
\end{lemma}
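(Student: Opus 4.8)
The plan is to treat $\phi\mapsto{_{s,u}}T_{\phi}$ as a linear map, prove the two endpoint bounds $L^{1}_{-n}\to S_{1}$ and $L^{\infty}_{-n}\to S_{\infty}$ (where $S_{\infty}$ is the space of bounded operators on $b^{2}_{\alpha}$), and then obtain the whole range $1<p<\infty$ by complex interpolation; the case $p=1$ is itself one of the endpoints.

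For the endpoint $p=\infty$ I would unwind ${_{s,u}}T_{\phi}=Q_{s}M_{\phi}I^{u}_{s}$. Because $\Phi=\alpha+2u>-1$, the map $I^{u}_{s}\colon b^{2}_{\alpha}\to L^{2}_{\alpha}$ is an isometry by Definition~\ref{definition of the h B-B space}; the multiplication operator $M_{\phi}$ is bounded on $L^{2}_{\alpha}$ with norm $\|\phi\|_{\infty}$ since $\nu_{\alpha}$ and Lebesgue measure have the same null sets; and $Q_{s}\colon L^{2}_{\alpha}\to b^{2}_{\alpha}$ is bounded precisely because $s$ satisfies \eqref{two} with $p=2$, which is exactly the standing hypothesis $2s-\alpha>-1$. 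Composing gives $\|{_{s,u}}T_{\phi}\|\lesssim\|\phi\|_{\infty}$, i.e.\ $\phi\mapsto{_{s,u}}T_{\phi}$ is bounded from $L^{\infty}_{-n}$ into $S_{\infty}$.

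For the endpoint $p=1$ I would first take $\phi\ge0$, so that ${_{s,u}}T_{\phi}\ge0$ by \eqref{<stmuf,f>} and hence $\|{_{s,u}}T_{\phi}\|_{S_{1}}=tr({_{s,u}}T_{\phi})$. The trace formula established just above, applied with $d\mu=\phi\,d\nu_{\alpha}$, then reduces the matter to evaluating $\int_{\mathbb{B}}\widetilde{\mu}_{\Phi,\alpha,2}\,d\nu_{-n}$. Substituting the definition of $\widetilde{\mu}_{\Phi,\alpha,2}$, interchanging the two integrations by Tonelli's theorem, and invoking Lemma~\ref{norm-kernel} twice — as $\|R_{\Phi}(x,\cdot)\|_{L^{2}_{\Phi}}^{2}\sim(1-|x|^{2})^{-(n+\Phi)}$ and as $\int_{\mathbb{B}}|R_{\Phi}(x,y)|^{2}(1-|x|^{2})^{\Phi}\,d\nu(x)\sim(1-|y|^{2})^{-(n+\Phi)}$ — collapses the $x$-integration, and since $2u=\Phi-\alpha$ one is left with $\int_{\mathbb{B}}\phi(y)(1-|y|^{2})^{-n}\,d\nu(y)\sim\|\phi\|_{L^{1}_{-n}}$. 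Thus ${_{s,u}}T_{\phi}\in S_{1}$ with $\|{_{s,u}}T_{\phi}\|_{S_{1}}\lesssim\|\phi\|_{L^{1}_{-n}}$; for a general complex $\phi\in L^{1}_{-n}$ I would split it into the four usual nonnegative pieces, each dominated pointwise by $|\phi|$, and add up with the triangle inequality in $S_{1}$.

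Finally, since $[L^{1}_{-n},L^{\infty}_{-n}]_{\theta}=L^{p}_{-n}$ and $[S_{1},S_{\infty}]_{\theta}=S_{p}$ for $1/p=1-\theta$, complex interpolation of the bounded linear map $\phi\mapsto{_{s,u}}T_{\phi}$ between the two endpoints yields its boundedness from $L^{p}_{-n}$ into $S_{p}$ for every $1<p<\infty$, which together with the case $p=1$ proves the lemma. I expect the main difficulty to be bookkeeping rather than genuine analysis: one must check that $\phi\mapsto{_{s,u}}T_{\phi}$ is a single well-defined linear map on $L^{1}_{-n}+L^{\infty}_{-n}$ whose restrictions to the two endpoint spaces coincide with the operators just bounded (true because on $L^{1}_{-n}\cap L^{\infty}_{-n}$ both are the same integral operator on the dense set of harmonic polynomials), and that ${_{s,u}}T_{\phi}$ is a priori densely defined for $\phi\in L^{1}_{-n}$ — which holds because $\Phi>-n$ forces $L^{1}_{-n}\subset L^{1}_{\Phi}=L^{1}_{s+u}$, the condition under which ${_{s,t}}T_{\phi}$ was introduced.
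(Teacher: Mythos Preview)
Your argument is correct. The $L^{\infty}\to S_{\infty}$ endpoint is immediate from the factorization ${_{s,u}}T_{\phi}=Q_{s}M_{\phi}I^{u}_{s}$ together with the boundedness of $Q_{s}$ on $L^{2}_{\alpha}$ (which is precisely $2s-\alpha>-1$); the $L^{1}_{-n}\to S_{1}$ endpoint you obtain from Lemma~\ref{trt-berezin} and the double application of Lemma~\ref{norm-kernel} is right, and the inclusion $L^{1}_{-n}\subset L^{1}_{\Phi}$ needed for well-definedness holds because $\Phi>-1>-n$. Interpolation between $S_{1}$ and $S_{\infty}$ then finishes the job.

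The paper organizes the proof differently. Instead of interpolating directly on ${_{s,u}}T_{\phi}$, it uses the unitary equivalence $D^{u}_{s}:b^{2}_{\alpha}\to b^{2}_{\Phi}$ (implicit in Lemma~\ref{trt-berezin} and in the intertwining relation of Theorem~\ref{intertwining relation}) to transfer the problem to the \emph{classical} Toeplitz operator ${_{\Phi}}T_{\phi}=Q_{\Phi}M_{\phi}i$ on the ordinary weighted Bergman space $b^{2}_{\Phi}$, and then cites \cite[Proposition~7.11]{Z1}, whose proof is itself the interpolation argument you wrote out. So the analytical content is the same; the difference is that you carry out the interpolation in situ on $b^{2}_{\alpha}$, whereas the paper first reduces to the $\Phi>-1$ Bergman setting and then invokes the known result. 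Your route is more self-contained and makes the role of the trace formula and the kernel estimates completely explicit; the paper's route is shorter once the reference is in hand and has the conceptual advantage of isolating once and for all that Schatten-class questions on $b^{2}_{\alpha}$ for arbitrary $\alpha$ reduce, via $D^{u}_{s}$, to the classical case.
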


\begin{proof}
Let $\{ e_{q} : q\in \mathbb{N}^{n} \}$ be any orthonormal basis for $b^{2}_{\alpha}$. By Lemma \ref{trt-berezin}, we have
\begin{align*}
tr({_{s,u}}T_{\phi})=\sum_{q}[{_{s,u}}T_{\phi}e_{q},e_{q}]_{b^{2}_{\alpha}}
=\sum_{q}[{_{\Phi}}T_{\phi}f_{q},f_{q}]_{b^{2}_{Q}}=tr({_{\Phi}}T_{\phi}),
\end{align*}
where ${_{\Phi}}T_{\phi}=Q_{\Phi}M_{\phi}i$ is a classical Bergman space Toeplitz operator. So ${_{s,u}}T_{\phi}\in S^{p}$ if and only if ${_{\Phi}}T_{\phi}\in S^{p}$. This is proved in exactly same way that \cite[Proposition 7.11]{Z1}  was proved.
\end{proof}
Note that by \cite[Corollary 13.4]{GKU2} with $u$ in place of $t$,
\begin{align}\label{<sumuf,f>}
[{_{s,u}}T_{\phi}f,f]_{b^{2}_{\alpha}} &=[Q_{s}M_{\phi}I^{u}_{s}f,f]_{b^{2}_{\alpha}}=\frac{V_{\Phi}}{V_{s}}[M_{\phi}I^{u}_{s}f,I^{u}_{s}f]_{L^{2}_{\alpha}}\\
&=\frac{V^{2}_{\Phi}}{V_{\alpha}V_{s}}\int_{\mathbb{B}}\phi |D_{s}^{u}f|^{2}d\nu_{\Phi} \qquad(f\in b^2_\alpha). \nonumber
\end{align}

\begin{lemma}\label{tmu<twidehatmu}
For any $\delta>0$ there exists a constant $C_{\delta}$ with the following property: If  $\mu$ is a finite positive Borel measure such that
${_{s,u}}T_{\widehat{\mu}_{\alpha,\delta}}$ is bounded on $b^{2}_{\alpha}$, then ${_{s,u}}T_{\mu}$ is bounded on $b^{2}_{\alpha}$ and ${_{s,u}}T_{\mu}\leq C_{\delta} \, {_{s,u}}T_{\widehat{\mu}_{\alpha,\delta}}$.
\end{lemma}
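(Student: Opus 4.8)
The plan is to deduce the operator inequality from a comparison of the associated quadratic forms. Writing $g=D_{s}^{u}f$, which ranges over all of $b^{2}_{\Phi}$ as $f$ ranges over $b^{2}_{\alpha}$ by Lemma \ref{Apply-Dst}, formulas (\ref{<stmuf,f>}) and (\ref{<sumuf,f>}) show that, up to the fixed constants $V_{\alpha},V_{s},V_{\Phi}$, the assertion ${_{s,u}}T_{\mu}\le C_{\delta}\,{_{s,u}}T_{\widehat{\mu}_{\alpha,\delta}}$ is equivalent to
\[
\int_{\mathbb{B}}|g(y)|^{2}\,d\kappa(y)\ \lesssim\ \int_{\mathbb{B}}\widehat{\mu}_{\alpha,\delta}(x)\,|g(x)|^{2}\,d\nu_{\Phi}(x),\qquad g\in b^{2}_{\Phi},
\]
where $d\kappa(y)=(1-|y|^{2})^{\Phi-\alpha}d\mu(y)$. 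I would prove this first for harmonic polynomials $g$: both Toeplitz operators act on such $g$, and once the displayed bound is available there, the hypothesis that ${_{s,u}}T_{\widehat{\mu}_{\alpha,\delta}}$ is bounded makes the right-hand side — hence the left-hand side — at most a constant times $\|g\|^{2}_{b^{2}_{\Phi}}\sim\|f\|^{2}_{b^{2}_{\alpha}}$. Thus ${_{s,u}}T_{\mu}$ extends to a bounded operator on $b^{2}_{\alpha}$ and the inequality of quadratic forms, i.e. the operator inequality, follows by density.

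Next I would rewrite the two sides. By (\ref{hyper-volume}), $\widehat{\mu}_{\alpha,\delta}(x)\sim\mu(E_{\delta}(x))(1-|x|^{2})^{-\alpha-n}$, so the right-hand side is comparable to $\int_{\mathbb{B}}\mu(E_{\delta}(x))\,|g(x)|^{2}(1-|x|^{2})^{\Phi-\alpha-n}\,d\nu(x)$. For the left-hand side I would apply the weighted subharmonicity estimate (\ref{equsubharmonic2}) with $p=2$ and weight $\Phi$ (legitimate since $\Phi>-1$) to the harmonic function $g$ at the point $y$, giving $|g(y)|^{2}\lesssim(1-|y|^{2})^{-n-\Phi}\int_{E_{\delta}(y)}|g(z)|^{2}(1-|z|^{2})^{\Phi}\,d\nu(z)$; multiplying by $(1-|y|^{2})^{\Phi-\alpha}$ and integrating against $d\mu(y)$ yields
\[
\int_{\mathbb{B}}|g(y)|^{2}\,d\kappa(y)\ \lesssim\ \int_{\mathbb{B}}(1-|y|^{2})^{-\alpha-n}\Bigl(\int_{E_{\delta}(y)}|g(z)|^{2}(1-|z|^{2})^{\Phi}\,d\nu(z)\Bigr)d\mu(y).
\]

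Then I would apply Tonelli's theorem to the nonnegative integrand, using the symmetry of the pseudohyperbolic metric ($z\in E_{\delta}(y)\iff y\in E_{\delta}(z)$) to interchange the order of integration; this turns the last expression into $\int_{\mathbb{B}}|g(z)|^{2}(1-|z|^{2})^{\Phi}\bigl(\int_{E_{\delta}(z)}(1-|y|^{2})^{-\alpha-n}\,d\mu(y)\bigr)d\nu(z)$. Since $(1-|y|^{2})\sim(1-|z|^{2})$ for $y\in E_{\delta}(z)$ by Lemma \ref{x-y-close}, the inner integral is $\lesssim(1-|z|^{2})^{-\alpha-n}\mu(E_{\delta}(z))$, and substituting this bound back produces exactly $\int_{\mathbb{B}}\mu(E_{\delta}(z))\,|g(z)|^{2}(1-|z|^{2})^{\Phi-\alpha-n}\,d\nu(z)$, which is comparable to the right-hand side. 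All implied constants originate in (\ref{equsubharmonic2}) and Lemma \ref{x-y-close} and depend only on $\delta$, so they can be collected into the promised $C_{\delta}$.

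The argument is of Schur/Fubini type and presents no essential difficulty; the care needed is bookkeeping. One must keep the weights consistent when passing between the measure-symbol form (\ref{<stmuf,f>}) and the function-symbol form (\ref{<sumuf,f>}) of the quadratic form, using $\Phi-\alpha=2u$ and tracking $V_{\alpha},V_{s},V_{\Phi}$, and one must read ${_{s,u}}T_{\mu}\le C_{\delta}\,{_{s,u}}T_{\widehat{\mu}_{\alpha,\delta}}$ as an inequality of quadratic forms on the common core of harmonic polynomials before extending by density. The subharmonicity input (\ref{equsubharmonic2}) applies directly since $|g|^{2}$ is genuinely subharmonic, $g$ being harmonic.
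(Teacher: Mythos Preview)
Your proposal is correct and follows essentially the same route as the paper: both arguments reduce to the quadratic-form inequality via (\ref{<stmuf,f>}) and (\ref{<sumuf,f>}), then combine the subharmonicity estimate (\ref{equsubharmonic2}) (with weight $\Phi$), Fubini/Tonelli using the symmetry $z\in E_{\delta}(y)\iff y\in E_{\delta}(z)$, and Lemma \ref{x-y-close} together with (\ref{hyper-volume}). The only cosmetic difference is direction: the paper starts from $[{_{s,u}}T_{\widehat{\mu}_{\alpha,\delta}}f,f]_{b^{2}_{\alpha}}$ and bounds it below by $[{_{s,u}}T_{\mu}f,f]_{b^{2}_{\alpha}}$, whereas you start from the $\mu$-side and bound it above; your added remarks on density of harmonic polynomials make explicit a point the paper leaves implicit.
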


\begin{proof}
Let $f\in b^{2}_{\alpha}$. We compute using (\ref{<sumuf,f>}), (\ref{hyper-volume}), Fubini theorem, (\ref{equsubharmonic2}), (\ref{<stmuf,f>}) , and obtain
\begin{align*}
[{_{s,u}}T_{\widehat{\mu}_{\alpha,\delta}}f,f]_{b^{2}_{\alpha}}
&=\frac{V^{2}_{\Phi}}{V_{\alpha}V_{s}}\int_{\mathbb{B}}\frac{\mu({E_{\delta}}(x))}{v_{\alpha}(E_{\delta}(x))} |D_{s}^{u}f(x)|^{2}d\nu_{\Phi}(x)\\
&\sim \int_{\mathbb{B}}\frac{|D_{s}^{u}f(x)|^{2}}{(1-|x|^{2})^{n-2u}}\int_{\mathbb{B}} \chi_{E_{\delta}(x)}(y)d\mu(y)d\nu(x)\\
&=\int_{\mathbb{B}}\int_{E_{\delta}(y)}\frac{|D_{s}^{u}f(x)|^{2}}{(1-|x|^{2})^{n-2u}}d\nu(x)d\mu(y)\\
&\sim\int_{\mathbb{B}}\frac{1}{(1-|y|^{2})^{n+\alpha}}\int_{E_{\delta}(y)} (1-|x|^{2})^{2u}|D_{s}^{u}f(x)|^{2}d\nu_{\alpha}(x)d\mu(y)\\
&\geq C \int_{\mathbb{B}} |D_{s}^{u}f(y)|^{2}d\kappa(y)=[{_{s,u}}T_{\mu}f,f]_{b^{2}_{\alpha}},
\end{align*}
which completes the proof of lemma.
\end{proof}

\begin{proof}[Proof of  Theorem \ref{Theorem-3}]\label{Proof of 1.4}
 (i) implies (ii). ${_{s,u}}T_{\mu}\in S_{p}$. Then $({_{s,u}}T_{\mu})^{p}$ is in $S_{1}$ so that $tr(({_{s,u}}T_{\mu})^{p})$ is finite. It follows from Lemma \ref{trt-berezin} that
 \begin{equation*}
\|{_{s,u}}T_{\mu}\|_{S_{p}}^{p}=tr(({_{s,u}}T_{\mu})^{p})=\int_{\mathbb{B}}[({_{s,u}}T_{\mu})^{p}{_{\alpha}}g_{y},{_{\alpha}}g_{y}]_{b^{2}_{\alpha}}d\nu_{-n}.
\end{equation*}
It follows from \cite[Proposition 1.31]{Z1} and (\ref{<talphagy,f>})
\begin{align*}
\|{_{s,u}}T_{\mu}\|^{p}_{S_{p}}\geq \int_{\mathbb{B}}[{_{s,u}}T_{\mu}{_{\alpha}}g_{y},{_{\alpha}}g_{y}]^{p}_{b^{2}_{\alpha}}d\nu_{-n}
= \int_{\mathbb{B}} \widetilde{\mu}_{\Phi,\alpha,2}^{p}d\nu_{-n}.
\end{align*}
Thus we have (ii).

(ii) implies (iii). Suppose $\widetilde{\mu}_{\Phi,\alpha,2}\in L^{p}_{-n}$. Fix $\delta=\delta_{0}$ where $\delta_{0}$ is the number provided by Lemma \ref{Kernel-two-sided}. By Lemma \ref{x-y-close}, (\ref{hyper-volume}), Lemma \ref{Kernel-two-sided} and Lemma \ref{norm-kernel}, we have
\begin{align*}
\widehat{\mu}_{\alpha,\delta}(y)
&\sim\frac{1}{(1-|y|^2)^{n+\alpha}}\int_{E_{\delta}(y)}d\mu(x)\\
&\sim(1-|y|^2)^{\Phi-\alpha}\int_{E_{\delta}(y)}\frac{|R_{\Phi}(x,y)|^{2}}{\|R_{\Phi}(x,y)\|_{b^{2}_{\Phi}}}d\mu(x)\\
&\leq \int_{E_{\delta}(y)}\frac{|R_{\Phi}(x,y)|^{2}}{\|R_{\Phi}(x,y)\|_{b^{2}_{\Phi}}}(1-|x|^2)^{\Phi-\alpha}d\mu(x)=\widetilde{\mu}_{\Phi,\alpha,2}.
\end{align*}
Thus we have (iii) for $\delta=\delta_{0}$. That it holds for every $0<\delta<1$ is a consequence of \cite[Lemma 3.2]{CLN1}.

(iii) implies (i). Suppose $\widehat{\mu}_{\alpha,\delta}\in L^{p}_{-n}$. Then ${_{s,u}}T_{\widehat{\mu}_{\alpha,\delta}}\in S^{p}$ by Lemma \ref{s,uTphiinsp}. By positivity and \cite[Theorem 1.27]{Z1}, $\sum_{q}[{_{s,u}}T_{\widehat{\mu}_{\alpha,\delta}}e_{q},e_{q}]^{p}_{b^{2}_{\alpha}}<\infty$ for any orthonormal set $\{ e_{q}\}$ in $b^{2}_{\alpha}$. Then $\sum_{q}[{_{s,u}}T_{\mu}e_{q},e_{q}]^{p}_{b^{2}_{\alpha}}<\infty$ too by Lemma \ref{tmu<twidehatmu}. We are done by applying \cite[Theorem 1.27]{Z1} again.

(iii) implies (iv). Lemma \ref{subharmonic-measure} with $x=a_{k}$ and $\alpha=-n$, and  Lemma \ref{x-y-close} yield
\begin{equation*}
\big(\widehat{\mu}_{\alpha,\delta}(a_{k})\big)^{p}\lesssim \int_{E_{\delta}(a_{k})} \big(\widehat{\mu}_{\alpha,\delta}\big)^{p} d\nu_{-n}.
\end{equation*}
Then
\begin{equation*}
\sum_{k=1}^{\infty}\big(\widehat{\mu}_{\alpha,\delta}\big)^{p}\lesssim \sum_{k=1}^{\infty} \int_{E_{\delta}(a_{k})} \big(\widehat{\mu}_{\alpha,\delta}(a_{k})\big)^{p} d\nu_{-n}\leq N\int_{\mathbb{B}} \big(\widehat{\mu}_{\alpha,\delta}\big)^{p} d\nu_{-n}
\end{equation*}
by Lemma \ref{ak} (iii).

(iv) implies (iii). Repeated use of Lemmas \ref{x-y-close}, \ref{ak}, equation (\ref{hyper-volume}), and that $E_{\delta/2}(x) \subset E_{\delta}(a_{k})$ for $x \in E_{\delta/2}(a_{k})$ yield
\begin{align*}
\int_{\mathbb{B}} \big(\widehat{\mu}_{\alpha,\delta/2}\big)^{p} d\nu_{-n}
&\lesssim \sum_{k=1}^{\infty} \int_{E_{\delta/2}(a_{k})} \frac{\mu(E_{\delta/2}(x))^{p}}{(1-|x|^2)^{n+p(n+\alpha)}} d\nu(x)\\
&\lesssim \sum_{k=1}^{\infty}\frac{1}{(1-|a_{k}|^2)^{n+p(n+\alpha)}}\int_{E_{\delta}(a_{k})} \mu(E_{\delta}(a_{k}))^{p}d\nu(x)\\
&\sim \sum_{k=1}^{\infty}\frac{(1-|a_{k}|^2)^{n}}{(1-|a_{k}|^2)^{n+p(n+\alpha)}}\mu(E_{\delta}(a_{k}))^{p}\\
&\sim \sum_{k=1}^{\infty}\frac{\mu(E_{\delta}(a_{k}))^{p}}{v_{\alpha}(E_{\delta}(a_{k}))^{p}}= \sum_{k=1}^{\infty} \big(\widehat{\mu}_{\alpha,\delta}(a_{k})\big)^{p}.
\end{align*}
Thus, we have $\widehat{\mu}_{\alpha,\delta/2}\in L^{p}_{-n}$. Now, by Lemma \ref{Lp-mu-delta}, we have $\widehat{\mu}_{\alpha,\delta}\in L^{p}_{-n}$.  The proof is complete.
\end{proof}
% ------------------------------------------------------------------------

% ------------------------------------------------------------------------
\end{document}